\newtheorem{lemma}{Lemma}[section]
\newtheorem{theorem}[lemma]{Theorem}
\newtheorem{corollary}[lemma]{Corollary}
\newtheorem{proposition}[lemma]{Proposition}
\theoremstyle{definition}
\newtheorem{definition}[lemma]{Definition}
\numberwithin{equation}{section}
\newcommand{\leaveout}[1]{}
\renewcommand{\p@enumii}{}
\newcommand{\DD}[1]{\mathbin{\frac{\rm d }{{\rm d }#1}}}
\newcommand{\ddt}{\DD t}
\newcommand{\ud}{\,{\mathrm d}}
\newcommand{\Div}{{\rm div\,}}
\newcommand{\Grad}{\nabla}
\newcommand{\supp}{{\rm supp\,}}
\newcommand\R{{\mathbb R}}
\newcommand\C{{\mathbb C}}
\newcommand\K{{\mathbb K}}
\newcommand\F{{\mathbb F}}
\newcommand\rplus{{\R_{+}}}
\newcommand\cplus{{\C_{+}}}
\newcommand\rminus{{\R_{-}}}
\newcommand\T{{\mathbb T}}
\newcommand\U{{\mathbb U}}
\newcommand\V{{\mathbb V}}
\newcommand\zero{\set{0}}
\newcommand{\Ascr}{\mathcal A}
\newcommand{\Bscr}{\mathcal B}
\newcommand{\Cscr}{\mathcal C}
\newcommand{\Gscr}{\mathcal G}
\newcommand{\Hscr}{\mathcal H}
\newcommand{\Lscr}{\mathcal L}
\newcommand{\Wscr}{\mathcal W}
\newcommand{\proj}{\mathbf P}
\newcommand{\shift}{\mathbf S}
\newcommand{\dom}[1]{\mathrm{dom}\left(#1\right)}
\newcommand{\res}[1]{\rho\left(#1\right)}
\newcommand{\re}[0]{\mathrm{Re}\,}
\newcommand{\Ipdp}[2]{\left\langle #1 , #2 \right\rangle}
\newcommand{\set}[1]{\left\lbrace #1 \right\rbrace}
\newcommand{\bigmid}{\bigm\vert}
\newcommand{\biggmid}{\biggm\vert}
\newcommand{\bi}{\begin{itemize}}
\newcommand{\ei}{\end{itemize}}
\newcommand{\be}{\begin{enumerate}}
\newcommand{\ee}{\end{enumerate}}
\newcommand{\Afrak}{\mathfrak A}
\newcommand{\Gfrak}{\mathfrak G}
\newcommand{\Pfrak}{\mathfrak P}
\newcommand{\Tfrak}{\mathfrak T}
\newcommand{\Ufrak}{\mathfrak U}
\newcommand{\sbm}[1]{\left[\begin{smallmatrix}#1\end{smallmatrix}\right]}
\newcommand{\bbm}[1]{\begin{bmatrix}#1\end{bmatrix}}
\def\etv{& \hskip-.3em\vrule\hskip-.3em &} 
\def\smalletv{&\vrule&} 
\def\smallcrh{\vrule height0pt depth2\ex@ width0pt
\cr\noalign{\hrule}
\vrule height6.5\ex@ depth0pt width0pt}
\newbox\smallstrutbox
\def\smallstrut{\relax\ifmmode\copy\smallstrutbox\else\unhcopy\smallstrutbox\fi}
\newenvironment{sysmatrix}{
\let\|=\etv
\hskip \arraycolsep
\begin{matrix}}
{\end{matrix}
\hskip \arraycolsep
}       
\newenvironment{smallsysmatrix}{\null\,\vcenter\bgroup
\let\|=\smalletv

\def\\{\smallstrut\math@cr}
\restore@math@cr\default@tag
\baselineskip\z@skip \lineskip\z@skip \lineskiplimit\lineskip
\ialign\bgroup\hfil$\m@th\scriptstyle##$\hfil&&\thickspace\hfil
$\m@th\scriptstyle##$\hfil\crcr
\crcr\noalign{\vskip -.3\ex@}%
}{\crcr\noalign{\vskip -.2\ex@}%
\crcr\egroup\egroup\,%
}
\begin{document}

\title{Well-posedness of time-varying linear systems}

\author{Mikael Kurula

\thanks{M. Kurula is with \AA bo Akademi University, Mathematics and Statistics, Domkyrkotorget 1, 20500 \AA bo, Finland (e-mail: mkurula@abo.fi).}}

\thispagestyle{empty}

\maketitle

\begin{abstract}
In this paper, we give easily verifiable sufficient conditions for two classes of perturbed linear, passive PDE systems to be well-posed, and we provide an energy inequality for the perturbed systems. Our conditions are in terms of smoothness of the operator functions that describe the multiplicative and additive perturbation, and here well-posedness essentially means that the time-varying systems have strongly continuous Lax-Phillips evolution families. A time-varying wave equation with a bounded multi-dimensional Lipschitz domain is used as illustration, and as a part of the example, we show that the time-invariant wave equation is a ``physically motivated'' scattering-passive system in the sense of Staffans and Weiss. The theory also applies to time-varying port-Hamiltonian systems.
\end{abstract}

\section{Introduction} 

Every linear, time-invariant, \emph{well-posed} system $\Sigma_i$ in continuous time, whose input space $U$, state space $X$ and output space $Y$ are Hilbert spaces, can after some technical setup be written in the following familiar-looking form:
\begin{equation}\label{eq:WPiCompat}
\Sigma_i:\quad\left\{
\begin{aligned}
	\dot x(t)&=A_{-1}x(t)+Bu(t),\\
	y(t)&=\overline Cx(t)+Du(t),\qquad t\geq\tau, \\
	x(\tau)&=x_\tau,
\end{aligned}\right.
\end{equation}
where $x(t)\in X$ is the \emph{state} at time $t$, $u(t)\in U$ is the \emph{input}, $y(t)\in Y$ is the \emph{output}, and $A_{-1}$ and $\overline C$ are certain extensions of the main operator $A$ and observation operator $C$. We will describe this class of systems in more detail below, in \S\ref{sec:prel}.

The system \eqref{eq:WPiCompat} is \emph{(scattering) passive} if all its trajectories satisfy the following energy inequality, for all $t\geq\tau$:
\begin{equation}\label{eq:LTIpasv}
	\|x(t)\|_X^2+\int_\tau^t\|y(s)\|_Y^2\ud s \leq 
	\|x(\tau)\|_X^2+\int_\tau^t\|u(s)\|_U^2\ud s.
\end{equation}

Let $\sbm{A_{-1}&B\\\overline C&D}$ be a time-invariant passive linear system on $(U,X,Y)$. In this paper, we prove that 
\begin{equation}\label{eq:Sigmal}
\Sigma_l:\quad\left\{
\begin{aligned}
	P(t)\,\dot x(t)&=\big(A_{-1}+P(t)G(t)\big)\,x(t)+Bu(t)\\
	y(t)&=\overline Cx(t)+Du(t),\qquad t\geq\tau,\\
	x(\tau)&=x_\tau,
\end{aligned}\right.
\end{equation}
and 
\begin{equation}\label{eq:Sigmar}
\Sigma_r:\quad\left\{
\begin{aligned}
	\dot x(t)&=\big(A_{-1}P(t)+G(t)\big)\,x(t)+Bu(t)\\
	y(t)&=\overline CP(t)x(t)+Du(t),\qquad t\geq\tau,\\
	x(\tau)&=x_\tau,
\end{aligned}\right.
\end{equation}
define time-varying well-posed systems (defined later) under certain smoothness conditions on $P(\cdot)$ and $G(\cdot)$.

The time-varying systems \eqref{eq:Sigmal} and \eqref{eq:Sigmar} can be written in many equivalent forms, by making different choices of $P(\cdot)$ and $G(\cdot)$, and some of these make the notation `l' for ``left'', `r' for ``right'' more evident; in $\Sigma_l$ we could for instance write
$$
	\dot x(t)=P(t)^{-1}\big(A_{-1}x(t)+Bu(t)\big)+G(t)\,x(t).
$$
The particular choice \eqref{eq:Sigmal}\,--\,\eqref{eq:Sigmar} has the advantage that the trajectories of both systems satisfy the same energy inequality: for all $t\geq\tau$ in the time interval $J$ of the system,
\begin{equation}\label{eq:SigmalEnergy}
\begin{aligned}
	&\Ipdp{P(t)x(t)}{x(t)}_X+\int_\tau^t \|y(s)\|_Y^2\ud s \leq \\ 
	&\qquad \Ipdp{P(\tau)x(\tau)}{x(\tau)}_X+\int_\tau^t \|u(s)\|_U^2\ud s\\
	&\qquad\quad +\int_\tau^t\Ipdp{\dot P(s)x(s)}{x(s)}_X\ud s\\
		&\qquad\quad +2\re\int_\tau^t\Ipdp{P(s)x(s)}{G(s)x(s)}_X\ud s.
\end{aligned}
\end{equation} 

Our investigation requires that we prove new generation results for evolution families, which is in itself a valuable contribution, since such results are currently rather scarce. The theory in the present paper generalizes the work of Schnaubelt and Weiss \cite{SchWe10}, and that of Chen and Weiss \cite{ChenWeiss15}, to a large extent by combining the techniques of these two papers. The exposition here is brief, avoiding duplication of detail through careful referencing. Reading \cite{SchWe10} and \cite{ChenWeiss15} first is recommended, to obtain needed background and many references.

Based on previous work \cite{KuZwWave}, we use the wave equation on a bounded Lipschitz domain $\Omega\subset\R^n$ to illustrate the applicability of the results of the present paper. This example is an extension of the wave equation in \cite[\S5]{SchWe10}, which can handle a moving object inside the domain, but the example cannot be treated with the tools developed in \cite{SchWe10}. As an intermediate step, we prove that the wave equation \eqref{eq:physPDEscatt} below can be written as a ``physically motivated'' scattering passive system in the sense of Staffans and Weiss \cite{StWe12b,StWe12a}. Due to the progress in \cite{KuZwWave} and \cite{StWe12b} after \cite{SchWe10}, the treatment of the example is somewhat easier in this paper than in \cite[\S5]{SchWe10}.

Linear port-Hamiltonian systems \cite{JaZwBook} are a large class of abstract PDEs with one-dimensional spatial domains, which includes the wave equation (on a string) and various beam equations. The theory in the present paper applies to time-varying port-Hamiltonian systems, in the same way as it applies to the wave equation in \S\ref{sec:Wave}; see in particular \cite[\S11.3]{JaZwBook} and \cite[(3.1) and Thm 4.6]{GZM05}. In fact, in closely related indpendent work \cite{JaLa19}, Jacob and Laasri consider well-posedness (defined sligthly differently) of time-varying boundary control systems, using port-Hamiltonian systems as motivating example. There is a considerable methodical overlap between the present paper and \cite{JaLa19}, and the theory in \cite{JaLa19} can likely also cover the example in \S V. Finally, we mention that Paunonen \cite{Pau17} and Pohjolainen \cite{PauPoh12} have studied robust output regulation of \emph{periodically} time-varying distributed parameter systems.

In \S\ref{sec:prel}, we collect the needed background on time-varying well-posed systems and their Lax-Phillips evolution families. Section \ref{sec:generation} contains our evolution-family generation results, in \S\ref{sec:WP} we prove well-posedness of \eqref{eq:Sigmal} and \eqref{eq:Sigmar}, and in \S\ref{sec:Wave} the paper is concluded with the wave equation example.

\section{Time-varying well-posed linear systems}\label{sec:prel}

In this section, we fix the notation and concepts needed later. We make the following assumptions throughout the paper: By $J\subset\R$ we denote a closed (time) interval of positive length, and we define $\Delta_J:=\set{(t,\tau)\in J^2\mid t\geq \tau}$ (a triangle if $J$ happens to be compact). We identify, e.g., $L^2(J;U)$ with the subspace of $L^2(\R;U)$ consisting of elements with support contained in $J$, and by $\proj_J$, we denote the orthogonal projection (by truncation) onto $L^2(J;U)$ in $L^2(\R;U)$. The bilateral shift of functions defined on $\R$ is $(\shift_tu)(\tau)=u(t+\tau)$, and we abbreviate $\shift_t^\pm:=\proj_{\R_\pm}\shift_t$, where $\R_\pm$ are in general closed or open, as fitting for the context. By writing, e.g., $H^1(J;U)$, we more precisely mean $H^1(J^0;U)$, where $J^0$ is the interior of $J$, and by derivatives evaluated at any end points of $J$, we mean the appropriate one-sided derivatives. 

\begin{definition}\label{def:evolfam}
A \emph{strongly continuous evolution family} on the Hilbert space $X$ with time interval $J$ is a two-parameter family $\T$ defined on $\Delta_J$, such that
\begin{enumerate}
\item $\T(t,\tau)\in\Lscr(X)$ for all $(t,\tau)\in\Delta_J$,
\item $\T(t,s)\,\T(s,\tau)=\T(t,\tau)$ for all $t,s,\tau\in J$: $t\geq s\geq \tau$,
\item $\T(t,t)=I$ for all $t\in J$, and
\item $(t,\tau)\mapsto\T(t,\tau)z$ is in $C(\Delta_J;X)$ for all $z\in X$.
\end{enumerate}

An evolution family $\T$ with time interval $J$ is \emph{locally (uniformly) exponentially bounded}  if for every compact $[a,b]\subset J$, there exist $M,\omega\in\R$, such that
\begin{equation}\label{eq:ExpBdd}
	\|\T(t,\tau)\|\leq Me^{\omega (t-\tau)},\qquad
		(t,\tau)\in\Delta_{[a,b]}.
\end{equation}
If there exist $M,\omega\in \R$ with the above property, which are independent of $[a,b]$, then we call $\T$ \emph{exponentially bounded}.

A family $\set{A(t): X\supset\dom{A(t)}\to X\mid t\in J}$ of $C_0$-semigroup generators are said to \emph{generate} $\T$ if
\begin{enumerate}
\item[a)] $\T(t,\tau)\,\dom{A(\tau)}\subset\dom{A(t)}$ for all $(t,\tau)\in \Delta_J$,
\item[b)] for every $\tau\in J$ with $\tau<\sup\, J$ and every $x_\tau\in\dom{A(\tau)}$, the function 
$$
	x(t):=\T(t,\tau)x_\tau,\quad t\in J_\tau:=\set{t\in J\mid t\geq\tau},
$$
is a solution in $C^1(J_\tau;X)$ of the Cauchy problem
\begin{equation}\label{eq:Cauchy}
	\dot x(t)=A(t)\,x(t), \quad t\in J_\tau,\quad x(\tau)=x_\tau.
\end{equation}
\end{enumerate}
\end{definition}

Not all evolution families have generator families in this sense, but a generator family can generate at most one evolution family. If $\T(t+s,s)$ is independent of $s\in J$ for all $t\geq0$ such that $t+s\in J$, then $\T^i_t:=\T(t+s,s)$, $t\geq0$ and $t+s\in J$, can be extended to a unique $C_0$ semigroup on $X$, which is always exponentially bounded, $\|\T^i_t\|\leq Me^{\omega t}$ for some $M,\omega\in\R$.

Some of our proofs use duality arguments, and we then need \emph{backward} evolution families \cite[Def.\ 2.5]{SchWe10}. Such are two-parameter families $\T$ defined on $\Delta_J$ with properties 1), 3), 4) in Def.\ \ref{def:evolfam}, but 2) is replaced by
$$
	\T(s,\tau)\,\T(t,s)=\T(t,\tau),\qquad t,s,\tau\in J,~t\geq s\geq \tau.
$$
A $C_0$-semigroup generator family $A(t)$, $t\in J$, \emph{generates} a backward evolution family $\T$ if 
$$
	\T(t,\tau)\,\dom{A(t)}\subset\dom{A(\tau)}
$$ 
for all $(t,\tau)\in \Delta_J$ and for every $t\in J$ with $t>\inf\, J$ and every $x_t\in\dom{A(t)}$, the function 
$$
	x(\tau):=\T(t,\tau)x_t,\quad \tau\in J\cap(-\infty,t]
$$
is a strongly continuous (in $X$) solution of the backward-time Cauchy problem 
$$
	\dot x(\tau)=-A(\tau)\,x(\tau), \quad \tau\in J\cap(-\infty,t],\quad x(t)=x_t.
$$

A \emph{contraction semigroup} is exponentially bounded by $M=1$ and $\omega=0$, and the generator of a contraction semigroup is maximal dissipative on $X$, meaning that $\re\Ipdp{Ax}x\leq 0$ for all $x\in\dom A$ and the resolvent set $\res A$ contains the open complex right-half plane $\cplus$.

We will prove that \eqref{eq:Sigmal} and \eqref{eq:Sigmar} describe time-varying  well-posed linear systems in the sense of \cite[Def.\ 3.2]{SchWe10}:

\begin{definition}\label{def:WP}
A (time-varying) \emph{well-posed system} $\Sigma$ on a closed time interval $J\subset\R$, with Hilbert input, state and output spaces $(U,X,Y)$, is a quadruple of linear operator families defined for $(t,\tau)\in \Delta_J$, mapping
\begin{equation}\label{eq:WPmaps}
\begin{aligned}
	\T(t,\tau)&:X\to X, ~ \F(t,\tau):L^2(J;U)\to L^2(J;Y),\\
	\quad\Phi(t,\tau)&:L^2(J;U)\to X,~  \Psi(t,\tau):X\to L^2(J;Y),
\end{aligned}
\end{equation}
boundedly, which have the following additional properties:
\begin{enumerate}
\item $\T$ is an evolution family on $X$ with time interval $J$,
\item the other families are \emph{causal} in the sense that
\begin{equation}\label{eq:WPcausal}
\begin{aligned}
	\Phi(t,\tau)&=\Phi(t,\tau)\proj_{[t,\tau]},\\ 
	\Psi(t,\tau)&=\proj_{[t,\tau]}\Psi(t,\tau)\qquad \text{and} \\
	\F(t,\tau)&=\proj_{[t,\tau]}\F(t,\tau) =\F(t,\tau)\proj_{[t,\tau]},
\end{aligned}
\end{equation}
\item all four families are locally uniformly bounded,
\item and they encode the linearity of the system, so that for all $t,s,\tau\in J$ with $t\geq s\geq \tau$:
\begin{equation}\label{eq:WPlin}
\begin{aligned}
	\Phi(t,\tau) &= \Phi(t,s)+\T(t,s)\Phi(s,\tau), \\
	\Psi(t,\tau) &= \Psi(t,s)\T(s,\tau)+\Psi(s,\tau), \\
	\F(t,\tau) &= \F(t,s)+\F(s,\tau) + \Psi(t,s)\Phi(s,\tau).
\end{aligned}
\end{equation}
\end{enumerate}

A well posed system is called \emph{time invariant} if $J=\rplus$ and the following are all independent of $s\geq0$, for $t\geq0$:
$$
\begin{aligned}
	\T_t^i&:=\T(t+s,s),\qquad\quad \,\F_t^i:=\shift_s\,\F(t+s,s)\,\shift_{-s},\\
	\Phi_t^i&:=\Phi(t+s,s)\,\shift_{-s},\quad \Psi_t^i:=\shift_s\Psi(t+s,s).
\end{aligned}
$$
\end{definition}

The four operator families of a well-posed system $\sbm{\T&\Phi\\\Psi&\F}$ are strongly continuous in $X$; see \cite[Prop.\ 3.5]{SchWe10}. By a \emph{trajectory} of a well-posed system on $J$ with initial state $x_\tau$ at time $\tau\in J$, $\tau<\sup\,(J)$, and input $u\in L_{loc}^2(J_\tau;U)$, we mean the triple $(u,x,y)\in L_{loc}^2(J_\tau;U)\times C(J_\tau;X)\times L_{loc}^2(J_\tau;Y)$:
$$
\begin{aligned}
	x(t)&=\T(t,\tau)x_\tau+\Phi(t,\tau)u\qquad \text{and} \\
	\proj_{[\tau,t]}y&=\Psi(t,\tau) x_\tau+\F(t,\tau)u,\qquad t\in J_\tau;
\end{aligned}
$$
see \cite[p.\ 282]{SchWe10}. In particular, a trajectory of a well-posed system is uniquely determined by its initial state $x_\tau$ and input $u$. \emph{Classical trajectories} of $\sbm{\T&\Phi\\\Psi&\F}$ are such that $(u,x,y)\in C(J_\tau;U)\times C^1(J_\tau;X)\times C(J_\tau;Y)$. 

The class of time-invariant well-posed systems in Def.\ \ref{def:WP} coincides with the standard class; take $s=0$ in the definitions of the time-invariant operators and see \cite[\S2.8]{StafBook}.

In this paper, we will be interested in evolution family generator families of the form $A_l(t)=P(t)^{-1}A+G(t)$ and $A_r(t)=A\,P(t)+G(t)$, where $A$ generates a contraction semigroup on $X$. By the Lumer-Phillips theorem \cite[II.3.5]{EnNa00}, $A$ is maximal dissipative.

We will apply the generation results in \S\ref{sec:generation} to the generator of the Lax-Phillips \emph{semigroup}
\begin{equation}\label{eq:LPsg}
	\Tfrak_t^i:=\bbm{\shift_t^-&0&0 \\ 0&I&0 \\ 0&0&\shift_t^+}
	\bbm{I&\Psi_t^i&\F_t^i \\ 0&\T_t^i&\Phi_t^i \\ 0&0&I },\quad t\geq0,
\end{equation}
of a \emph{passive} time-invariant system $\sbm{\T_t^i&\Phi_t^i\\\Psi_t^i&\F_t^i }$. Due to passivity, this is a contraction semigroup on the space $\Hscr:=L^2(\rminus;Y)\times X\times L^2(\rplus;U)$ with generator \cite[Prop.\ 3.1]{SchWe10}
\begin{align}
	\Afrak(y,x_0,u) &=(y',A_{-1}x_0+Bu(0),u'), \label{eq:LPsgGen} \\
	\dom{\Afrak}&=\big\{(y,x_0,u) \in H^1(\rminus;Y)\times X\times H^1(\rplus;U) \bigmid \nonumber\\
		&\quad A_{-1}x_0+Bu(0)\in X, \ y(0)=\overline Cx_0+Du(0)\big\}.\nonumber
\end{align}
The resulting operator family will be associated to a time-varying Lax-Phillips \emph{evolution family}, which uniquely determines a time-varying well-posed system. 

Proposition 3.7 in \cite{SchWe10} is the key to the approach:

\begin{theorem}\label{thm:LPsys}
Let $\T$, $\Phi$, $\Psi$ and $\F$ be two-parameter families of linear operators defined on some $\Delta_J$, mapping as in \eqref{eq:WPmaps} and having the causality properties \eqref{eq:WPcausal}. These families form a well-posed system if and only if the (Lax-Phillips) family
\begin{equation}\label{eq:LPevol}
	\Tfrak(t,\tau):=\bbm{
	\shift_{t-\tau}^-&\shift_{t}^-\,\Psi(t,\tau)&\shift_t^-\,\F(t,\tau)\,\shift_{-\tau} \\
	0&\T(t,\tau)&\Phi(t,\tau)\,\shift_{-\tau} \\ 0&0&\shift_{t-\tau}^+},
\end{equation}
defined for $(t,\tau)\in \Delta_J$, is an evolution family on $\Hscr$.
\end{theorem}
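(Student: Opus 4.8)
The plan is to verify the theorem by matching, block by block, the four axioms of a strongly continuous evolution family for $\Tfrak(t,\tau)$ on $\Hscr=L^2(\rminus;Y)\times X\times L^2(\rplus;U)$ (Definition~\ref{def:evolfam}) against the defining properties of a well-posed system for $(\T,\Phi,\Psi,\F)$ (Definition~\ref{def:WP}). Since $\Tfrak(t,\tau)$ in \eqref{eq:LPevol} is block upper triangular with corner diagonal blocks the fixed shift (semi)groups $\shift_{t-\tau}^-$ on $L^2(\rminus;Y)$ and $\shift_{t-\tau}^+$ on $L^2(\rplus;U)$, the correspondence is essentially forced; this is the content of \cite[Prop.\ 3.7]{SchWe10}, and the only change here is that $J$ is an arbitrary closed interval, which affects nothing. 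I will use throughout that each of $\T(t,\tau),\Phi(t,\tau),\Psi(t,\tau),\F(t,\tau)$ is, up to composition with outer shift factors that are isometric on the relevant subspaces, a compression of $\Tfrak(t,\tau)$ to coordinate subspaces of $\Hscr$, so that boundedness (resp.\ strong continuity in $(t,\tau)$) transfers in both directions, and that the one-sided shifts $\shift_{t-\tau}^\pm$ form strongly continuous $C_0$-semigroups.

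The first two axioms are routine. Axiom~1, $\Tfrak(t,\tau)\in\Lscr(\Hscr)$ for each $(t,\tau)\in\Delta_J$, is equivalent to $\T(t,\tau),\Phi(t,\tau),\Psi(t,\tau),\F(t,\tau)$ mapping boundedly as in \eqref{eq:WPmaps}: forward by compression, backward because a block-triangular matrix with bounded entries is bounded. Axiom~3, $\Tfrak(t,t)=I$, reduces to $\shift_0^\pm$ being the identity on $L^2(\rminus;Y)$ resp.\ $L^2(\rplus;U)$ (true), to $\T(t,t)=I$ (axiom~3 of an evolution family for $\T$), and to the three off-diagonal blocks vanishing at $\tau=t$, which is automatic from the causality \eqref{eq:WPcausal} since $\proj_{[t,t]}=0$ in $L^2$. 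Thus axioms~1 and 3 for $\Tfrak$ encode exactly that the four families map boundedly and that $\T(t,t)=I$.

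The substantive point is the cocycle axiom~2, $\Tfrak(t,s)\,\Tfrak(s,\tau)=\Tfrak(t,\tau)$ for $t\geq s\geq\tau$ in $J$. Writing out the $3\times 3$ block product gives six block identities: the $(1,1)$ and $(3,3)$ entries are the semigroup laws $\shift_{t-s}^-\shift_{s-\tau}^-=\shift_{t-\tau}^-$ and $\shift_{t-s}^+\shift_{s-\tau}^+=\shift_{t-\tau}^+$, always valid; the $(2,2)$ entry is $\T(t,s)\T(s,\tau)=\T(t,\tau)$, i.e.\ axiom~2 of an evolution family for $\T$; and the $(1,2)$, $(2,3)$, $(1,3)$ entries, after simplifying the internal shift products via $\shift_{t-s}^-\shift_s^-=\shift_t^-$ and $\shift_{-s}\shift_{s-\tau}^+=\proj_{[s,\infty)}\shift_{-\tau}$ and then absorbing $\proj_{[s,\infty)}$ using causality ($\Phi(t,s)\proj_{[s,\infty)}=\Phi(t,s)$, and similarly for $\F$), reduce after cancellation of the injective outer shifts $\shift_t^-$, $\shift_{-\tau}$ precisely to the three linearity relations of \eqref{eq:WPlin}, in that order. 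So, with causality of the four families assumed, axiom~2 for $\Tfrak$ is equivalent to ``$\T$ is an evolution family'' together with \eqref{eq:WPlin}, i.e.\ items~1 and 4 of Definition~\ref{def:WP}. In the direction ``well-posed system $\Rightarrow$ $\Tfrak$ an evolution family'' this amounts to substituting \eqref{eq:WPlin} into the product, with no cancellation needed; for the converse one uses injectivity of $\shift_t^-$ and $\shift_{-\tau}$ on the relevant subspaces to read \eqref{eq:WPlin} off.

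Finally, axiom~4 (strong continuity of $(t,\tau)\mapsto\Tfrak(t,\tau)z$ on $\Delta_J$): the diagonal blocks are strongly continuous as $C_0$-semigroups, the $(2,2)$ block by axiom~4 of an evolution family for $\T$, and the off-diagonal blocks because the four families of a well-posed system are strongly continuous by \cite[Prop.\ 3.5]{SchWe10}; conversely strong continuity of $\Tfrak$ passes to every compression. Item~3 of Definition~\ref{def:WP} (local uniform boundedness), needed only in the direction ``$\Tfrak$ an evolution family $\Rightarrow$ well-posed system,'' follows from Banach--Steinhaus: $\Delta_{[a,b]}$ is compact for compact $[a,b]\subset J$, so $(t,\tau)\mapsto\Tfrak(t,\tau)z$ has bounded image there for each $z\in\Hscr$, whence $\sup_{\Delta_{[a,b]}}\|\Tfrak(t,\tau)\|<\infty$ and each compressed family inherits this bound. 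Collecting the block-by-block equivalences proves the theorem. I expect the only real obstacle to be the third step: carefully tracking the truncated shifts $\shift^\pm_t,\shift_{-\tau}$ through the off-diagonal entries of the product and checking that causality \eqref{eq:WPcausal} makes them collapse to \eqref{eq:WPlin} exactly, with no leftover truncations and in the correct order.
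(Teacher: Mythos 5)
The paper gives no proof of this statement --- it is imported verbatim from \cite[Prop.\ 3.7]{SchWe10} --- and your block-by-block verification is exactly the standard argument behind that citation, so the two approaches coincide. The computation is correct; the one place deserving explicit care is that the identity $\shift_{t-s}^-\shift_s^-=\shift_t^-$ used in the $(1,2)$ and $(1,3)$ blocks is valid only on functions supported in $(-\infty,s]$, which is precisely what the causality \eqref{eq:WPcausal} of $\Psi(s,\tau)$ and $\F(s,\tau)$ supplies.
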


\noindent
See \cite[\S3]{SchWe10} for more details on the connection between Lax-Phillips evolution families and well-posed systems.

The resolvent set $\res A$ of the generator $A$ of a contraction semigroup contains 1, by the Lumer-Phillips theorem, so we may equip $\dom A$ with the norm $\|x\|_{1}:=\|(I-A)x\|_X$ to make it a Hilbert space which is densely and continuously embedded in $X$; this space is commonly denoted by $X_1$. Moreover, we define the \emph{extrapolation space} $X_{-1}$ to be the completion of $X$ in the norm $\|x\|_{-1}:=\|(I-A)^{-1}x\|_X$. With this setup, $X_{-1}$ can be identified with the dual of $X_1^d:=\dom {A^*}$ with pivot space $X$, so that
\begin{equation}\label{eq:pivot}
	\Ipdp xz_{X_{-1},X_1^d}=\Ipdp xz_X, \quad x\in X,\,z\in X_1^d.
\end{equation}
Furthermore, the unitary operator $I-A:\dom A\to X$ can be uniquely extended into a unitary operator $I-A_{-1}:X\to X_{-1}$, where $A_{-1}$ is the unique extension of $A$ to an operator in $\Lscr(X;X_{-1})$. Then $A$ has the maximality property that $\dom A=\set{x\in X\mid A_{-1}x\in X}$.

With this setup, well-posed systems on Hilbert spaces can always be written in the form \eqref{eq:WPiCompat}, since they are \emph{compatible} by \cite[Thm 5.1.12]{StafBook}. The operator $\overline C\in\Lscr(Z;Y)$ is called a \emph{compatible extension} of the observation operator $C$ to the \emph{solution space} 
\begin{equation}\label{eq:solspace}
	Z:=\dom A+(\alpha-A_{-1})^{-1}BU, 
\end{equation}
for some $\alpha$ in the resolvent set $\res A$, where the particular choice of $\alpha$ does not matter. By \cite[Lemma 4.3.12]{StafBook}, $Z$ is a Hilbert space with
\begin{equation}\label{eq:Znorm}
	\|x\|_Z^2=\inf_{Ax+Bu\in X} \|Ax+Bu\|_X^2+\|x\|_X^2+\|u\|_U^2 ,
\end{equation}
and $(\alpha-A_{-1})^{-1}B\in\Lscr(U;Z)$. We have $\dom A\subset Z\subset X$ with continuous embeddings, and $Z\subset X$ is dense, but in general $\dom A\subset Z$ is not dense, so that $\overline C$ is in general not uniquely determined by $C$. However, $\overline C$ is uniquely determined by the system and $D$. See \cite[\S5.1]{StafBook} for more details.

By \cite[Prop.\ 5.2]{MaStWe06} or \cite[Thm 11.1.5]{StafBook}, a time-invariant, well-posed system $\Sigma_i$ is passive if and only if
\begin{equation}\label{eq:passpow}
	2\re\Ipdp{A_{-1}x+Bu}{x}\leq \|u\|^2-\|\overline Cx+Du\|^2
\end{equation}
for all $x\in X$ and $u\in U$ such that $A_{-1}x+Bu\in X$. We say that $\Sigma_i$ is \emph{energy preserving} if all continuous trajectories satisfy \eqref{eq:LTIpasv} with equality, and this is equivalent to \eqref{eq:passpow} holding with equality.

\section{Time-varying perturbation of maximal dissipative operators}\label{sec:generation}

The presentation in this section follows \cite{SchWe10}, with some ingredients added from \cite{ChenWeiss15}. We introduce two functions $P,G:J\to \mathcal L(X)$ which we use to perturb a maximal dissipative operator on $X$. Throughout the paper, these functions have the following properties, for all $t\in J$ and $z\in X$:
\begin{equation}\label{eq:standing}
\begin{aligned}
&\bullet P(t)=P(t)^*\geq0, \\
&\bullet \text{$P(t)$ has an inverse in $\Lscr(X)$},\\
& \bullet P(\cdot)z,\, P(\cdot)^{-1}z\in C^1(J;X)\text{ for all $z\in X$, and}\\
&\bullet G(\cdot)z\in C(J;X)\text{ for all $z\in X$}.
\end{aligned}
\end{equation}
From these assumptions and the uniform boundedness principle, see \cite[Thm 2.6]{Rudin73} or \cite[Thm 1.1.11]{TanabeBook}, it follows that $P(\cdot)$, $P(\cdot)^{-1}$, $\dot P(\cdot)$, and $G(\cdot)$ are all uniformly bounded on compact subintervals of $J$. Moreover,
\begin{equation}\label{eq:PinvDiff}
	\ddt P(t)^{-1}z=-P(t)^{-1}\dot P(t)P(t)^{-1}z,\qquad t\in J.
\end{equation}

\begin{theorem}\label{thm:generationL}
In addition to \eqref{eq:standing}, let $G(\cdot)z\in C^1(J;X)$ for all $z\in X$. Then
\begin{equation}\label{eq:AlDef}
\begin{aligned}
	A_l(t)&:=P(t)^{-1}A+G(t),\\
	\dom{A_l(t)}&:=\dom A,
\end{aligned} \qquad t\in J,
\end{equation}
generates an evolution family $\T_l$ on $X$ with time interval $J$, so that for all $(t,\tau)\in\Delta_J$ and $x_\tau\in\dom A$, $\T_l(t,\tau)x_\tau\in\dom A$ and $x(t):=\T_l(t,\tau)x_\tau$ solves the Cauchy problem 
\begin{equation}\label{eq:CauchyL}
	\dot x(t)=A_l(t)\,x(t), \quad t\in J_\tau,\quad x(\tau)=x_\tau.
\end{equation}

The restriction of $\T_l$ to a compact $\Delta_{[a,b]}$ has an exponential bound which depends only on the maximal values of $\|P(t)^{-1}\|$, $\|P(t)\|$ and $\|G(t)\|$ on $[a,b]$.

For all $x_0\in\dom{A}$, the function
\begin{equation}\label{eq:AlTl}
	(t,\tau)\mapsto A_l(t)\,\T_l(t,\tau)x_0,
\end{equation}
is in $C(\Delta_J;X)$. 

For all $t\in J$ with $t>\inf\,J$ and $x_0\in\dom{A_l}$, the function $\tau\mapsto \T_l(t,\tau)x_0$ is continuously differentiable in $X$, on $\set{\tau\in J\mid \tau\leq t}$, and 
\begin{equation}\label{eq:TlDtau}
	\frac{\partial}{\partial\tau} \T_l(t,\tau)x_0=-\T_l(t,\tau)A_l(\tau)x_0.
\end{equation}
\end{theorem}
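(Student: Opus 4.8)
\emph{Proof plan.}
The plan is to verify, for $A_l(\cdot)$, the hypotheses of the generation machinery for non-autonomous Cauchy problems with $t$-independent domain used in \cite{SchWe10} (with the refinements of \cite{ChenWeiss15}), and then to extract the two differentiability statements from the strong continuity it provides. First I would dispose of the per-time generation requirement. Since $P(t)=P(t)^*$ is positive and boundedly invertible, $\Ipdp{x}{y}_{P(t)}:=\Ipdp{P(t)x}{y}$ is an inner product equivalent to the one of $X$, with $\|P(t)^{-1}\|^{-1}\|x\|^2\le\|x\|_{P(t)}^2\le\|P(t)\|\,\|x\|^2$. For $x\in\dom A$ we have $\re\Ipdp{P(t)^{-1}Ax}{x}_{P(t)}=\re\Ipdp{Ax}{x}\le0$, so $P(t)^{-1}A$ is $\|\cdot\|_{P(t)}$-dissipative; moreover $P(t)-A\colon\dom A\to X$ is boundedly invertible, being bounded below by $\|P(t)^{-1}\|^{-1}$ (since $\re\Ipdp{(P(t)-A)x}{x}\ge\Ipdp{P(t)x}{x}$) and having dense range (the same estimate applied to $P(t)-A^*$, legitimate because $A^*$ is dissipative), so that $1\in\res{P(t)^{-1}A}$ and Lumer--Phillips makes $P(t)^{-1}A$ a $\|\cdot\|_{P(t)}$-contraction generator. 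Adding the bounded $G(t)$, the operator $A_l(t)$ with domain $\dom A$ is a $C_0$-semigroup generator, a $\|\cdot\|_{P(t)}$-quasicontraction of rate $\le\|G(t)\|_{P(t)}\le\|G(t)\|\,\bigl(\|P(t)\|\,\|P(t)^{-1}\|\bigr)^{1/2}$, as Definition~\ref{def:evolfam} requires of the members of a generator family.

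Next I would build $\T_l$ over each compact $[a,b]\subset J$ and glue the pieces by the evolution property. The delicate step is \emph{stability} of $\{A_l(t)\}$: a bound $\bignorm{(\lambda-A_l(t_1))^{-1}\cdots(\lambda-A_l(t_k))^{-1}}\le M(\lambda-\omega)^{-k}$ for all $\lambda>\omega$ and all $a\le t_1\le\cdots\le t_k\le b$, \emph{uniform in $k$}. Each resolvent is a $\|\cdot\|_{P(t_j)}$-contraction up to the factor $(\lambda-\omega)^{-1}$, but the relevant norm varies with $t_j$; the idea is to switch from $\|\cdot\|_{P(t_j)}$ to $\|\cdot\|_{P(t_{j+1})}$ at each step, paying a factor $1+O(|t_j-t_{j+1}|)$ because $s\mapsto P(s)$ is Lipschitz in operator norm on $[a,b]$ (by uniform boundedness of $\dot P$), and then to use that the $t_j$ are ordered so that these factors telescope to a bound $e^{O(b-a)}$ independent of $k$. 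This is the time-dependent-norm device of \cite{SchWe10}, and I expect it to be the main obstacle of the proof, being the one place where the $t$-dependence of the natural dissipativity norm must be fought. An energy estimate along solutions in the $\|\cdot\|_{P(t)}$-norms then gives the quantitative bound: one obtains $\|\T_l(t,\tau)\|\le\bigl(\max_{[a,b]}\|P^{-1}\|\cdot\max_{[a,b]}\|P\|\bigr)^{1/2}e^{\omega(t-\tau)}$ on $\Delta_{[a,b]}$, which is the exponential bound claimed in the statement.

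For the generation theorem itself I would take the regularity subspace $Y:=\dom A$ with the norm $\|\cdot\|_1$. Then $A_l(t)\in\Lscr(Y,X)$; the semigroup generated by $A_l(t)$ preserves $\dom{A_l(t)}=Y$ and restricts to a $C_0$-semigroup on $Y$ (the graph norms of $A_l(t)$ and of $A$ are equivalent, uniformly on $[a,b]$); and $t\mapsto A_l(t)\in\Lscr(Y,X)$ is continuous --- indeed locally Lipschitz --- by \eqref{eq:standing} together with the extra hypothesis $G(\cdot)z\in C^1(J;X)$; this extra hypothesis, stronger than bare generation requires, is what (as in \cite{SchWe10}) yields the strong continuity of $\T_l$ on $Y$ on which the final two assertions rest. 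Combined with Step~2, this is exactly the input of the non-autonomous generation theorem of \cite{SchWe10} extended via \cite{ChenWeiss15}, which produces a unique strongly continuous evolution family $\T_l$ on $X$ with time interval $J$, generated by $A_l(\cdot)$ in the sense of Definition~\ref{def:evolfam} --- in particular $\T_l(t,\tau)\dom A\subset\dom A$ and $t\mapsto\T_l(t,\tau)x_\tau$ solves \eqref{eq:CauchyL} for $x_\tau\in\dom A$ --- and which moreover restricts to a strongly continuous evolution family on $Y$. (Equivalently $\T_l$ is the limit of ordered products $\prod_j e^{(t_{j+1}-t_j)A_l(t_j)}$ over refining partitions, carrying the bound of Step~2.)

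Finally, fix $x_0\in\dom A$. Joint continuity of $(t,\tau)\mapsto A_l(t)\T_l(t,\tau)x_0=P(t)^{-1}A\,\T_l(t,\tau)x_0+G(t)\T_l(t,\tau)x_0$ on $\Delta_J$ follows by combining strong continuity of $P(\cdot)^{-1}$ and $G(\cdot)$, continuity of $(t,\tau)\mapsto\T_l(t,\tau)x_0$ into $X$, and continuity of $(t,\tau)\mapsto\T_l(t,\tau)x_0$ into $Y=\dom A$ --- hence of $(t,\tau)\mapsto A\,\T_l(t,\tau)x_0$ into $X$ --- from Step~3. For the $\tau$-derivative, fix $t\in J$ with $t>\inf J$; for $\tau'\le\tau\le t$ in $J$ the evolution property and \eqref{eq:CauchyL} give
$$
\T_l(t,\tau')x_0-\T_l(t,\tau)x_0=\T_l(t,\tau)\bigl(\T_l(\tau,\tau')x_0-x_0\bigr)=\T_l(t,\tau)\int_{\tau'}^{\tau}A_l(s)\,\T_l(s,\tau')x_0\,\ud s ,
$$
so, dividing by $\tau'-\tau$ and letting $\tau'\uparrow\tau$, the averaged integrand tends to $A_l(\tau)x_0$ by the joint continuity just shown, and the left derivative equals $-\T_l(t,\tau)A_l(\tau)x_0$; the right derivative is obtained symmetrically, using $\T_l(t,\tau'')x_0-\T_l(t,\tau)x_0=-\T_l(t,\tau'')\int_{\tau}^{\tau''}A_l(s)\T_l(s,\tau)x_0\,\ud s$ for $\tau\le\tau''\le t$. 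Thus $\tau\mapsto\T_l(t,\tau)x_0$ is differentiable with derivative $-\T_l(t,\tau)A_l(\tau)x_0$, which is continuous in $\tau$ since $\T_l(t,\cdot)$ is strongly continuous and $\tau\mapsto A_l(\tau)x_0$ is continuous; so the map is in $C^1$ and \eqref{eq:TlDtau} holds.
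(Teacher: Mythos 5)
Your proposal is correct and follows essentially the same route as the paper: one first proves quasi-contractivity of $A_l(t)$ in the time-dependent equivalent norms $\|\cdot\|_{P(t)}$ and stability of the family by the norm-switching/telescoping device (which the paper outsources to \cite[p.~271]{SchWe10} and to the bounded-perturbation theorem \cite[Thm 5.2.3]{PazyBook}), and then feeds this into the Kato--Pazy generation theorem for stable families with constant domain $\dom A$, from which the $X_1$-continuity of $(t,\tau)\mapsto\T_l(t,\tau)x_0$ and the $\tau$-derivative formula \eqref{eq:TlDtau} follow exactly as you describe. The one point to tighten is Step~3: the (H1)--(H3) version of the theorem you gesture at would additionally require stability of the family of parts of $A_l(t)$ in $Y=\dom A$, which you do not verify; the version that applies directly is the constant-domain one \cite[Thm 5.4.8]{PazyBook} (the paper's citation), whose hypotheses --- stability plus continuous differentiability of $t\mapsto A_l(t)x$ for $x\in\dom A$ --- are precisely what your Steps 1--3 already establish from \eqref{eq:standing} and $G(\cdot)z\in C^1(J;X)$.
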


In general the exponential bounds of $\T_l$ depend on the compact subinterval $[a,b]\subset J$, but if $P(\cdot)^{-1}$, $P(\cdot)$ and $G(\cdot)$ are uniformly bounded on all of $J$, then $\T_l$ is \emph{globally} exponentially bounded, rather than only locally.

\begin{proof}
We temporarily restrict $t$ and $\tau$ to a compact subinterval $[a,b]\subset J$ and throughout we assume that $t>a$. By \cite[p.\ 271]{SchWe10}, $P(t)^{-1}A$ generates a $C_0$-semigroup on $X$ for every fixed $t$, and the family $t\mapsto P(t)^{-1}A$ is a stable family of semigroup generators in the sense of \cite[Def.\ 5.2.1]{PazyBook}, with stability constants that depend only on the maximal values of $\|\dot P(t)\|$ and $\|P(t)^{-1}\|$ on $t\in[a,b]$. Since $\|G(t)\|\leq K$ for some $K$ independent of $t\in[a,b]$, it follows from \cite[Thm 5.2.3]{PazyBook} that $A_l(t)$, $t\in[a,b]$, is also a stable family of semigroup generators, whose stability bound $(M,\omega)$ further depends on the bound $K$ of $\|G(\cdot)\|$ on $[a,b]$. By \cite[Thm 5.4.8]{PazyBook}, the evolution family generated by $A_l(\cdot)$ on $[a,b]$ has the same exponential bound $(M,\omega)$, and it satisfies all the assertions with $J$ replaced by $[a,b]$ and \eqref{eq:AlTl} replaced by
$$
	(t,\tau)\mapsto \T_l(t,\tau)x_\tau \in C(\Delta_{[a,b]};X_1).
$$
From this follows, however, that $(t,\tau)\mapsto A\T_l(t,\tau)x_\tau \in C(\Delta_{[a,b]};X)$, and by the uniform boundedness of $P(\cdot)^{-1}$ on $[a,b]$, \eqref{eq:AlTl} holds with $J$ replaced by $[a,b]$.

Every fixed pair $(t,\tau)\in\Delta_J$ is contained in some $\Delta_{[a,b]}$ with $[a,b]$ compact. Therefore, the family $\T_l(t,\tau)$ defined for every $(t,\tau)\in\Delta_J$ as the evolution family generated by $A_l(\cdot)$ with the time interval $[\tau,t]$, has properties 1)--3) and a) in Def.\ \ref{def:evolfam}. Every point $(t_0,\tau_0)\in\Delta_J$, where we may want to verify continuity in condition 4) or \eqref{eq:AlTl}, is also contained in some compact $\Delta_{[a,b]}$. This proves in particular that $\T_l$ is an evolution family on all of $J$. Similarly, one verifies that condition b) in Def.\ \ref{def:evolfam} holds on all of $J$, that $\T_l(t,\cdot)x_0\in C^1(J;X)$, and that the latter satisfies \eqref{eq:TlDtau} for all $\tau\in J$ with $\tau\leq t$. 
\end{proof}

In order to drop the extra assumption that $G$ is strongly in $C^1$, we define the $\Lscr(X)$-valued averaged function $G_n$ by
\begin{equation}\label{eq:PnDef}
	G_n(t)z:=n\int_t^{t+1/n} G(s)z\ud s,\qquad t\in J, ~z\in X,
\end{equation}
where we extend $G(t):=G(b)$, $t>b$, if $J$ has a finite right endpoint $b$; then $G_n(\cdot)z\in C^1(J;X)$ for all $z\in X$ and $\|G(t)\|\leq K$ for $t$ in a compact interval $[a,b+1/n]$ implies that $\|G_n(t)\|\leq K$ for $t\in[a,b]$.

The dual $X_{-1,l}^{t}$ of $\dom{A^*P(t)^{-1}}$ with pivot space $X$ can be identified with the extrapolation space of $P(t)^{-1}A$ in Thm \ref{thm:generationL}. Moreover, the operators $P(t)^{-1}$ extend to locally uniformly bounded isomorphisms $P(t)^{-1}_{-1}$ from $X_{-1}$ to $X_{-1,l}^t$ with locally uniformly bounded inverses by \cite[Prop. 4.2(a)]{SchWe10}, so that we may identify $X_{-1,l}^t$ with $X_{-1}$. In practice, however, it is often easiest to use the time-varying norm 
$$
	\|x\|_{-1}=\|(I-P(t)_{-1}^{-1}A_{-1})^{-1}x\|_X
$$ 
on $X_{-1}$, where $A_{-1}$ is the extension of $A$ to an operator in $\Lscr(X;X_{-1})$. Then \eqref{eq:pivot} holds with $X_1^d$ replaced by $X_{l,1}^{d,t}:=\dom{A^*P(t)^{-1}}$. 

By Thm \ref{thm:generationL}, the family
\begin{equation}\label{eq:AnDef}
\begin{aligned}
	A_n(t)&:=P(t)^{-1}A+G_n(t), \\
	\dom{A_n(t)}&:=\dom A,\qquad t\in J,
\end{aligned}
\end{equation}
generates an evolution family $\T_n$ on $X$ with time interval $J$, whose restrictions to compact intervals $[a,b]\subset J$ have exponential bounds which are independent of $n$. With the setup in the preceding paragraph, $A_l(t)$ and $A_n(t)$ have unique extensions to operators in $\Lscr(X;X_{-1})$, and we denote these extensions by $A_{-1,l}(t)$ and $A_{-1,n}(t)$, observing that 
\begin{equation}\label{eq:Amin1lexp}
	A_{-1,l}(t)=P(t)^{-1}_{-1}A_{-1} +G(t),\quad t\in J,
\end{equation}
and analogous for $A_{-1,n}(t)$.

\begin{theorem}\label{thm:extensionL}
For all $(t,\tau)\in\Delta_J$ and $z\in X$, the limit
\begin{equation}\label{eq:TlDefTK}
	\T_l(t,\tau)z:=\lim_{n\to\infty} \T_n(t,\tau)z
\end{equation}
exists, with uniform convergence on $\Delta_{[a,b]}$ for compact $[a,b]\subset J$, and $\T_l$ is the unique locally exponentially bounded evolution family on $X$ with time interval $J$, which satisfies
\begin{equation}\label{eq:TlIntegEq}
	\T_l(t,\tau)z=\U(t,\tau)z+
	\int_\tau^t\U(t,s)\,G(s)\,\T_l(s,\tau)z\ud s
\end{equation}
for all $z\in X$ and $(t,\tau)\in\Delta_J$, where $\U$ is the locally exponentially bounded evolution family generated by $P(\cdot)^{-1}A$. 

If $G(\cdot)z\in C^1([a,b];X)$ for all $z\in X$, then $\T_l$ equals the evolution family in Thm \ref{thm:generationL}.

For every $\tau\in J$ with $\tau<\sup\,J$ and $x_\tau\in X$,
$$
	x(t):=\T_l(t,\tau)x_\tau,\qquad t\in J_\tau,
$$
is a solution in $C(J_\tau;X)$ of the equation
\begin{equation}\label{eq:TlMild}
	x(t)-x_\tau=\int_\tau^t A_{-1,l}(s)\,x(s)\,ds, \quad (t,\tau)\in\Delta_J,
\end{equation}
where the integral is computed in $X_{-1}$.

A function $z\in C(J_\tau;X)$ solves \eqref{eq:TlMild} if and only if it is in $C(J_\tau;X)\cap C^1(J_\tau;X_{-1})$ and solves the following Cauchy problem in $X_{-1}$:
\begin{equation}\label{eq:CauchyLextr}
		\dot x(t)=A_{-1,l}(t)\,x(t), \quad t\in J_\tau,\quad x(\tau)=x_\tau.
\end{equation}
\end{theorem}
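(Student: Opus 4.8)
The plan is to leverage Theorem~\ref{thm:generationL} as a black box for the approximants $\T_n$ and then pass to the limit. I would proceed in four stages. \emph{Stage 1: construct the limit and establish the integral equation.} Fix a compact $[a,b]\subset J$. Since $A_n(t)=P(t)^{-1}A+G_n(t)$ and $\U$ is generated by $P(\cdot)^{-1}A$, the variation-of-parameters formula (valid by Thm~\ref{thm:generationL} applied to the $C^1$-in-$X$ perturbation $G_n$, exactly as in \cite[\S3]{SchWe10}) gives
$$
	\T_n(t,\tau)z=\U(t,\tau)z+\int_\tau^t\U(t,s)\,G_n(s)\,\T_n(s,\tau)z\ud s.
$$
The exponential bounds on $\T_n$ and $\U$ are independent of $n$ on $\Delta_{[a,b]}$, and $\|G_n(t)\|\leq K$ uniformly. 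A Gr\"onwall/successive-approximation argument in the Banach space $C(\Delta_{[a,b]};\Lscr(X))$ (strong topology) shows the map $\Kscr\mapsto\U z+\int\U G_n\Kscr z$ is a uniform contraction after finitely many iterates, with Lipschitz constant controlled by $K$ and the exponential bounds only; hence $\{\T_n(\cdot,\cdot)z\}$ is Cauchy provided $G_n z\to Gz$ uniformly on $[a,b]$, which holds by the Lebesgue differentiation theorem and strong continuity of $G(\cdot)z$ (uniform continuity on compacts). This yields the limit \eqref{eq:TlDefTK}, uniform on $\Delta_{[a,b]}$, and \eqref{eq:TlIntegEq} by passing to the limit in the integral equation (dominated convergence, using local uniform bounds). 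Strong continuity of $(t,\tau)\mapsto\T_l(t,\tau)z$ and the evolution-family identities 1)--3) of Def.~\ref{def:evolfam} are inherited from $\T_n$ under uniform convergence; local exponential boundedness passes to the limit as well.

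\emph{Stage 2: uniqueness.} If $\widetilde\T$ is another locally exponentially bounded evolution family satisfying \eqref{eq:TlIntegEq}, subtract the two integral equations and apply Gr\"onwall on each $\Delta_{[a,b]}$ to get $\widetilde\T(t,\tau)z=\T_l(t,\tau)z$. \emph{Stage 3: consistency with Thm~\ref{thm:generationL}.} When $G(\cdot)z\in C^1([a,b];X)$, the evolution family $\widehat\T_l$ from Thm~\ref{thm:generationL} also satisfies \eqref{eq:TlIntegEq} (again variation of parameters, now with the genuine generator $A_l$), so by uniqueness $\widehat\T_l=\T_l$.

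\emph{Stage 4: the mild and extrapolated Cauchy problems.} Rewrite \eqref{eq:TlIntegEq} using that $\U$ solves, for $z\in\dom A$, $\DD{s}\U(s,\tau)z=P(s)^{-1}A\,\U(s,\tau)z$; integrating by parts in $X_{-1}$ (where $P(s)^{-1}_{-1}A_{-1}$ acts boundedly, by the identification $X_{-1,l}^t\cong X_{-1}$ from \cite[Prop.~4.2]{SchWe10}) one converts \eqref{eq:TlIntegEq} into $\T_l(t,\tau)z-z=\int_\tau^t\big(P(s)^{-1}_{-1}A_{-1}+G(s)\big)\T_l(s,\tau)z\ud s=\int_\tau^tA_{-1,l}(s)\,x(s)\ud s$ with $x(s)=\T_l(s,\tau)z$; the integrand is in $C(J_\tau;X_{-1})$ since $\T_l(\cdot,\tau)z\in C(J_\tau;X)$ and $A_{-1,l}(\cdot)\in\Lscr(X;X_{-1})$ is locally bounded and strongly continuous, so the $X_{-1}$-integral makes sense. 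This is \eqref{eq:TlMild}. For the final equivalence: if $z\in C(J_\tau;X)$ solves \eqref{eq:TlMild}, the right side is a $C^1(J_\tau;X_{-1})$ function of $t$ with derivative $A_{-1,l}(t)z(t)$, giving \eqref{eq:CauchyLextr}; conversely, integrating \eqref{eq:CauchyLextr} in $X_{-1}$ recovers \eqref{eq:TlMild} by the fundamental theorem of calculus for $X_{-1}$-valued $C^1$ functions.

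The main obstacle I expect is Stage 1, specifically making the successive-approximation/Gr\"onwall argument rigorous at the level of the \emph{strong} operator topology while keeping all constants $n$-independent: one must be careful that the convergence $G_n(\cdot)z\to G(\cdot)z$ is only strong (not in operator norm), so the fixed-point argument has to be run pointwise in $z$ and then the uniform-on-$\Delta_{[a,b]}$ convergence extracted from equicontinuity, which in turn rests on the $n$-independent exponential bounds of $\T_n$ furnished by Thm~\ref{thm:generationL}. The interplay with the moving extrapolation spaces $X_{-1,l}^t$ in Stage 4 is a secondary technical point, handled by the isomorphism in \cite[Prop.~4.2]{SchWe10}.
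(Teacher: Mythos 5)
Your overall strategy is sound but genuinely different from the paper's. The paper constructs $\T_l$ by passing to the Howland evolution semigroups $E_n$ of $\T_n$ on $C([a,b];X)$, showing their generators $\Gscr_n$ share an $n$-independent core on which they converge, and invoking a Trotter--Kato theorem; the integral equation \eqref{eq:TlIntegEq} is then read off from the bounded-perturbation variation-of-constants formula for the Howland semigroups $E_\U$ and $E_{\T_l}$, and the identification with Thm \ref{thm:generationL} is made by comparing Howland generators on the core. You instead work directly with the integral equation at the level of the evolution families, which is more elementary and makes Stages 2--4 essentially immediate: your Stage 3, identifying the smooth case by uniqueness of solutions of \eqref{eq:TlIntegEq}, is arguably cleaner than the paper's core argument, and your Stage 4 derivation of \eqref{eq:TlMild} from \eqref{eq:TlIntegEq} (really a Fubini computation rather than integration by parts) works, whereas the paper obtains \eqref{eq:TlMild} by passing to the limit in the integrated Cauchy problem for $\T_n$.

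The one step that does not close as written is the Cauchy-sequence estimate in Stage 1. Subtracting the integral equations for $\T_n$ and $\T_m$ produces the term $\|(G_n(s)-G_m(s))\T_m(s,\tau)z\|$, in which the vector being acted on depends on $m$; strong convergence $G_n(s)w\to G(s)w$ for fixed $w$, or uniform convergence of $G_n(\cdot)z$ for fixed $z$, does not control this, and the equicontinuity you invoke is essentially circular (it is what the uniform-in-$m$ convergence is supposed to deliver). The standard repair is to reverse the order: first construct $\T_l$ directly as the unique fixed point of \eqref{eq:TlIntegEq} by Picard iteration in $C(\Delta_{[a,b]};X)$ for each $z$ (this needs only the local uniform bound on $G$ and the exponential bound on $\U$; the composition property 2) of Def.\ \ref{def:evolfam} then follows from uniqueness by splicing trajectories), and only afterwards estimate $\T_n(t,\tau)z-\T_l(t,\tau)z$, where the inhomogeneous term becomes $(G_n(s)-G(s))\T_l(s,\tau)z$ with the \emph{fixed} continuous function $s\mapsto\T_l(s,\tau)z$; this tends to zero uniformly by \eqref{eq:3.21}, and Gr\"onwall finishes. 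With that reordering your argument is complete.
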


Uniqueness in \eqref{eq:TlIntegEq} holds in the following stronger sense: Assume that $\mathbb S(t,\tau)$ is a two-parameter family of operators in $\Lscr(X)$, such that $\mathbb S(\cdot,\cdot)z$ is weakly continuous for all $z\in X$, and \eqref{eq:TlIntegEq} holds with $\mathbb S$ in place of $\T_l$ with weak integrals, for all $x_\tau\in X$ and $(t,\tau)\in\Delta_J$. Then $\mathbb S=\T_l$.

Surprisingly, proving uniqueness of the solutions of \eqref{eq:TlMild} and \eqref{eq:CauchyLextr} turned out tricky. The proof of \cite[Thm 3.9]{ChenWeiss15} uses properties of $C_0$-semigroups which may have no counterpart for evolution families in general. We will return to this later, in a remark immediately after Thm \ref{thm:Sigmal}.

\begin{proof}
We again fix a compact subinterval $[a,b]\subset J$. 

\emph{Step 1: Constructing $\T_l$ by applying a Trotter-Kato theorem to the Howland evolution semigroup of $\T_n$.} With the supremum norm, $C([a,b];X)$ is a Banach space. For $\sigma\geq0$ and $t\in [a,b]$, define
$$
	\sigma\mapsto (E_n^\sigma f)(t):=\left\{\begin{aligned}
		 \T_n(t,t-\sigma)\,f(t-\sigma),&\quad t-\sigma\in [a,b], \\
		 \T_n(t,a)\,f(a),&\quad \text{otherwise}.
	\end{aligned}\right.
$$
By showing that the function $(t,\sigma)\mapsto (E_n^\sigma f)(t)$ is in $C( [a,b]\times\rplus;X)$, hence uniformly continuous on $[a,b]\times[0,1]$, one can show that $\sigma\mapsto E_n^\sigma$, $\sigma\in\rplus$, is a $C_0$-semigroup on $C([a,b];X)$, which is called the (Howland) evolution semigroup \cite{How74} associated to $ \T_n$. An evolution family is uniquely determined by its Howland semigroup, because
\begin{equation}\label{eq:HowlInv}
	\T_n(t,\tau)f(\tau)=(E_n^{t-\tau}f)(t),\quad (t,\tau)\in\Delta_{[a,b]},
\end{equation}
for all $f\in C([a,b];X)$, and for all $z\in X$ there exists some continuous $f$ with $f(\tau)=z$. Moreover, $\|E_n^\sigma\|\leq Me^{\omega\sigma}$ if $(M,\omega)$ is an exponential bound for $\T_n$ on $[a,b]$.

The argument in the proof of \cite[Thm 3.12]{ChLa99}, see also \cite[Prop.\ 3.4]{ChenWeiss15}, can be slightly adapted to establish that the generator of $E_n$ is the unique closure $\Gscr_n$ (as an unbounded operator on $C([a,b];X)$) of 
\begin{equation}\label{eq:Gscrn}
\begin{aligned}
	& (\Gscr_{0,n}f)(t) :=A_n(t)f(t)-f'(t),~ t\in [a,b],\\
	 &\dom{\Gscr_{0,n}}:=\{f\in C^1([a,b];X)\mid \forall t\in [a,b]:\\
	 	& \qquad f(t)\in\dom A,~ A_n(\cdot)f(\cdot)\in C([a,b];X)\}.
\end{aligned}
\end{equation}
In particular, the denseness in $C([a,b];X)$ of the linear span of functions $t\mapsto\alpha(t)\T_n(t,\tau)x_\tau$ for $t>\tau$ and $t\mapsto0$ for $t\leq\tau$, where $\alpha\in C^1(\R)$ with compact support contained in $(\tau,\infty)$, follows easily from the denseness of this span in $C_0(\R;X)$, the space of continuous $X$-valued functions tending to zero at $\pm\infty$. Due to the continuity and the uniform boundedness of $P(\cdot)$, $P(\cdot)^{-1}$ and $G_n(\cdot)$ on $[a,b]$, the condition $A_n(\cdot)f(\cdot)\in C([a,b];X)$ is equivalent to $Af(\cdot)\in C([a,b];X)$, and hence the core $\dom{\mathcal G_{0,n}}$ is independent of $n$.

As in the proof of \cite[Prop.\ 3.7]{ChenWeiss15}, the uniform boundedness of $G$ on $[a,b]$ implies that the pointwise multiplication operators $M_G$ and $M_{G_n}$ are bounded on $C([a,b];X)$, and that $M_{G_n}f\to M_Gf$ for all $f\in C([a,b];X)$, so that $\Gscr_n f\to \Gscr f$ in $C([a,b];X)$ for all $f$ in the core $\dom{\Gscr_{0,n}}$, where
$$
	\Gscr:=\Gscr_n+M_G-M_{G_{n}}. 
$$
Proceeding as in step 2 of the proof of \cite[Thm 3.8]{ChenWeiss15}, we get that $\Gscr$ generates a strongly continuous semigroup $E$ on $C(X;[a,b])$ and that $\|E_n^\sigma f- E^\sigma f\|_\infty\to0$ as $n\to\infty$ for all $f\in C([a,b];X)$ and $\sigma\geq0$, where $\|\cdot\|_\infty$ is the maximum norm on $[a,b]$, uniformly for $\sigma$ restricted to compact intervals. 

If $G(\cdot)$ is strongly continuously differentiable and we denote the evolution family in Thm \ref{thm:generationL} by $\widetilde \T_l$, then the generator of the Howland semigroup of $\widetilde\T_l$ coincides with $\Gscr$ on the core $\dom{\Gscr_{0,n}}$. Then $\sigma\mapsto E^\sigma$ is the Howland semigroup of both $\T_l$ and $\widetilde \T_l$ and so $\widetilde \T_l=\T_l$ on $\Delta_{[a,b]}$.

Now the proof of \cite[Lemma 3.6]{ChenWeiss15} gives that $ \T_l(t,\tau)z=\lim_{n\to\infty} \T_n(t,\tau)z$ exists for all $z\in X$ and $(t,\tau)\in\Delta_{[a,b]}$, with uniform convergence over $(t,\tau)\in \Delta_{[a,b]}$ since here \linebreak $t-\tau\in[0,b-a]$. Then $ \T_l$ inherits properties 2)--4) in Def.\ \ref{def:evolfam}, with $[a,b]$ instead of $J$, from $\T_n$. Moreover,
$$
	\|\T_l(t,\tau)z\|=
	\lim_{n\to\infty} \|\T_n(t,\tau)z\| \leq
	Me^{\omega(t-\tau)}\|z\|,
$$
so that $\T_l(t,\tau)$ is bounded for all $t\in[a,b]$. This proves that $\T_l$ is an exponentially bounded evolution family on $X$ with time interval $[a,b]$. Moreover, $\T_l$ is seen to be an evolution family on $X$ with time interval $J$, by a simpler version of the extension outlined towards the end of the proof of Thm \ref{thm:generationL}.

\emph{Step 2: The unique solution of \eqref{eq:TlIntegEq} is $\T_l$.} This is a variation on the argument in \cite[Thm VI.9.19]{EnNa00}. Fix a compact $[a,b]\subset J$ arbitrarily. By step 1, $\Gscr_0$ defined as the closure $\Gscr_n$ of \eqref{eq:Gscrn} with $G_n=0$, and $\Gscr=\Gscr_0+M_G$, both generate Howland semigroups $E_\U$ and $E_{\T_l}$ with time interval $[a,b]$, corresponding to the locally exponentially bounded evolution families $\U$ and $\T_l$, respectively. By \cite[Cor.\ III.1.7]{EnNa00}, 
\begin{equation}\label{eq:SemiVC}
	E_{\T_l}^{t-\tau}f=E_\U^{t-\tau}f+\int_0^{t-\tau}E_\U^{t-\tau-s}M_GE_{\T_l}^sf\ud s
\end{equation}
holds in $C([a,b];X)$ for all $t-\tau\geq0$ and $f\in C([a,b];X)$. Letting $z\in X$ be arbitrary, picking some continuous $f$ with $f(s)=z$, and evaluating \eqref{eq:SemiVC} in $t$, we get the following from \eqref{eq:HowlInv}, where $\delta_t\in\Lscr\big(C([a,b];X);X\big)$ is point evaluation at $t\in[a,b]$:
\begin{equation}\label{eq:UTl}
\begin{aligned}
	\T_l(t,\tau)z-\U(t,\tau)z &=\\
	\delta_t\int_\tau^t E_\U^{t-r}M_GE_{\T_l}^{r-\tau}f\ud r &=\\
	\int_\tau^t \U(t,r)\,\delta_r(M_GE_{\T_l}^{r-\tau}f)\ud r &=\\
	\int_\tau^t \U(t,r)\,G(r)\, \T_l(r,\tau) z\ud r.&
\end{aligned}
\end{equation}
Thus $\T_l$ satisfies \eqref{eq:TlIntegEq} for all $(t,\tau)\in\Delta_{[a,b]}$, and since $[a,b]\subset J$ is arbitrary, \eqref{eq:TlIntegEq} holds for all $(t,\tau)\in\Delta_J$. Uniqueness follows by applying Gr\"onwall's theorem to $\phi(t):=\|\mathbb S(t,\tau)z-\T_l(t,\tau)z\|$, using that $\U(t,\cdot)$ and $G$ are uniformly bounded on every $[\tau,t]$.

\emph{Step 3: $x=\T_l(\cdot,\tau)x_\tau$ solves \eqref{eq:TlMild}.} Since $A_n(\cdot)$ generates $\T_n$ with time interval $J$, \eqref{eq:CauchyL} holds with $A_l(t)$ replaced by $A_n(t)$ and $x_\tau\in \dom{A_n(t)}$. Integrating, we get
\begin{equation}\label{eq:TnMild}
	\T_n(t,\tau)x_\tau-x_\tau=\int_\tau^t A_n(s)\,\T_n(s,\tau)x_\tau\ud s,
\end{equation}
as an equality in $X$, and hence in $X_{-1}$. We can replace $A_n(s)$ by its extension $A_{-1,n}(s)$ and compute the integral in $X_{-1}$ instead of in $X$.

For all $z\in X$ and $s\in[\tau,t]$,
$$
\begin{aligned}
	&\|A_{-1,l}(s)\,z\|_{-1}\leq \|\big((I-P(s)^{-1}A)^{-1}-I\big)\,z\| \\
	&\qquad+\|(I-P(s)^{-1}A)^{-1}P(s)^{-1}G(s)\,z\| \leq L\|z\|,
\end{aligned}
$$
by \cite[(2.7)]{SchWe10} and the uniform boundedness of $G(\cdot)$ on $[a,b]$. Local uniform boundedness of $A_{-1,l}(s)$ as an operator in $\Lscr(X;X_{-1})$ follows, and since $G_n$ has the same uniform bound as $G$, also $\|A_{-1,n}(s)\|_{\Lscr(X;X_{-1})}\leq L$ on compact subintervals of $J$. By a similar argument, for $f\in C([\tau,t];X)$ and $s\in[\tau,t]$, 
$$
\begin{aligned}
	\|A_{-1,n}(s)\,f(s)-A_{-1,l}(s)\,f(s)\|_{-1} &\leq \\
	N\,\|G_n(s)\,f(s)-G(s)\,f(s)\|&,
\end{aligned}
$$
and by \cite[(3.21)]{ChenWeiss15}, for all $f\in C([a,b];X)$:
\begin{equation}\label{eq:3.21}
	\lim_{n\to\infty}\sup_{s\in[a,b]} \|G_n(s)f(s)-G(s)f(s)\|=0.
\end{equation}
Gathering the above, we get for $x_\tau\in X$ and $s\in[\tau,t]$, that
$$
\begin{aligned}
	\|A_{-1,n}(s)\,\T_n(s,\tau)x_\tau-A_{-1,l}(s)\,\T_l(s,\tau)x_\tau\|_{-1} &\leq \\
	L\cdot \|\T_n(s,\tau)x_\tau-\T_l(s,\tau)x_\tau\|&\\
	+\|\big(A_{-1,n}(s)-A_{-1,l}(s)\big)\,\T_l(s,\tau)x_\tau\|&\to 0,
\end{aligned}
$$
uniformly in $s\in[\tau,t]$ as $n\to\infty$. Hence, letting $n\to\infty$ in \eqref{eq:TnMild}, we get \eqref{eq:TlMild} for $x_\tau\in\dom A$. From
$$
	\left\|\int_\tau^tA_{-1,l}(s)\,\T_l(s,\tau)x_\tau\ud s\right\|_{-1}\leq
	(t-\tau)\,LMe^{\omega (t-\tau)}\|x_\tau\|
$$
it follows that we can extend \eqref{eq:TlMild} by density to all of $X$.

\emph{Step 4: The Cauchy problem in $X_{-1}$.} Now let $z\in C([\tau,t];X)$ be any solution of \eqref{eq:TlMild}. Then $z(\tau)=x_\tau$ and it is clear that $z(\cdot)$ and its derivative $A_{-1,l}(\cdot)\,z(\cdot)$ are both in $C(J_\tau;X_{-1})$, and that $z(\cdot)$ is a solution of the Cauchy problem \eqref{eq:CauchyLextr}. Conversely, if $z\in C(J_\tau;X)\cap C^1(J_\tau;X_{-1})$ solves \eqref{eq:CauchyLextr}, then, by integration, $z$ also solves \eqref{eq:TlMild}.
\end{proof}

The following consequences of the above theorems are of particular interest when developing the theory for multiplicative perturbation from the right, the case corresponding to \eqref{eq:Sigmar}:

\begin{corollary}\label{cor:V}
The family
$$
	\widetilde A_n(t):=P(t)A+P(t)\,G_n(t)P(t)^{-1},\quad t\in J,
$$
generates an evolution family $\widetilde\T_n$ on $X$ with time interval $J$.

For all $(t,\tau)\in \Delta_J$ and $z\in X$, the strong limit
\begin{equation}\label{eq:PTtlP}
	\widetilde\T_l(t,\tau)z:=\lim_{n\to\infty}\widetilde\T_n(t,\tau)z,\quad z\in X,
\end{equation}
exists, with uniform convergence on $\Delta_{[a,b]}$ for compact intervals $[a,b]\subset J$, and
\begin{equation}\label{eq:Vdef}
	\V(t,\tau)z:=P(t)^{-1}\widetilde\T_l(t,\tau)P(\tau)z,\quad z\in X,
\end{equation}
is a locally exponentially bounded evolution family on $X$ with time interval $J$.

If $G(\cdot)z\in C^1([a,b];X)$ for all $z\in X$, then $\V$ is generated by $AP(t)+G(t)-P(t)^{-1}\dot P(t)$ with domain $P(t)^{-1}\dom A$, $t\in J$.
\end{corollary}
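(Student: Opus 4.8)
The plan is to reduce all three assertions to Theorems~\ref{thm:generationL} and~\ref{thm:extensionL} by using $Q(\cdot):=P(\cdot)^{-1}$ as the multiplicative weight in place of $P(\cdot)$ and then conjugating by $P(\cdot)$. First note that $Q(\cdot)$ again satisfies the standing assumptions~\eqref{eq:standing}: $Q(t)=Q(t)^*\ge0$ since $P(t)=P(t)^*$ is positive and boundedly invertible, $Q(t)$ has the bounded inverse $P(t)$, and $Q(\cdot)z=P(\cdot)^{-1}z$, $Q(\cdot)^{-1}z=P(\cdot)z$ lie in $C^1(J;X)$. A routine product rule for a strongly $C^1$ operator function composed with a $C^1$ vector function shows that $t\mapsto P(t)G_n(t)P(t)^{-1}z$ lies in $C^1(J;X)$ for every $z\in X$, using that $G_n(\cdot)z\in C^1(J;X)$ by the construction~\eqref{eq:PnDef}. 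Since $\widetilde A_n(t)=Q(t)^{-1}A+\bigl(P(t)G_n(t)P(t)^{-1}\bigr)$ with $\dom{\widetilde A_n(t)}=\dom A$, Theorem~\ref{thm:generationL}, applied with $Q$ in the role of $P$ and $P(\cdot)G_n(\cdot)P(\cdot)^{-1}$ in the role of $G$, yields the first assertion at once.

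For the second assertion I would re-run the \emph{proof} of Theorem~\ref{thm:extensionL} with the same substitutions, now taking $P(\cdot)G_n(\cdot)P(\cdot)^{-1}$ as the approximating sequence of $P(\cdot)G(\cdot)P(\cdot)^{-1}$ in place of $G_n$ and $G$. That proof uses only three features of the approximants: they are strongly $C^1$ (checked above), they are locally uniformly bounded in $t$ uniformly in $n$ (clear, since $\|G_n(t)\|$ is bounded on compacts uniformly in $n$ and $P(\cdot),P(\cdot)^{-1}$ are locally uniformly bounded), and they converge in the sense of~\eqref{eq:3.21}; the last point follows because $P(\cdot)^{-1}f(\cdot)\in C([a,b];X)$ whenever $f\in C([a,b];X)$, so \cite[(3.21)]{ChenWeiss15} applied to $P(\cdot)^{-1}f(\cdot)$, together with the local uniform bound on $P(\cdot)$, gives $\sup_{s\in[a,b]}\|P(s)(G_n(s)-G(s))P(s)^{-1}f(s)\|\to0$. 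Hence the strong limit $\widetilde\T_l(t,\tau)z=\lim_n\widetilde\T_n(t,\tau)z$ exists uniformly on compact $\Delta_{[a,b]}$, and $\widetilde\T_l$ is a locally exponentially bounded evolution family on $X$ with time interval $J$. That $\V(t,\tau)=P(t)^{-1}\widetilde\T_l(t,\tau)P(\tau)$ is then a locally exponentially bounded evolution family is a direct check of Definition~\ref{def:evolfam}: $\V(t,t)=I$ and $\V(t,s)\V(s,\tau)=\V(t,\tau)$ hold once the factors $P(s)^{-1}P(s)$ cancel; strong continuity of $(t,\tau)\mapsto\V(t,\tau)z$ follows from joint strong continuity and local uniform boundedness of $\widetilde\T_l$ together with the strong continuity and local uniform boundedness of $P(\cdot)$ and $P(\cdot)^{-1}$; and $\|\V(t,\tau)\|\le\|P(t)^{-1}\|\,\|\widetilde\T_l(t,\tau)\|\,\|P(\tau)\|$ is a local exponential bound.

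For the last assertion, assume $G(\cdot)z\in C^1([a,b];X)$; then $P(\cdot)G(\cdot)P(\cdot)^{-1}z\in C^1([a,b];X)$ by the same product rule, so Theorem~\ref{thm:generationL} (again with $Q$ in the role of $P$) together with the coincidence statement in Theorem~\ref{thm:extensionL} identifies $\widetilde\T_l$ as the evolution family generated by $\widetilde A_l(t):=P(t)A+P(t)G(t)P(t)^{-1}$ with $\dom{\widetilde A_l(t)}=\dom A$. Consequently $\widetilde\T_l(t,\tau)\dom A\subset\dom A$, and for $x_\tau\in P(\tau)^{-1}\dom A$ the function $v(t):=\widetilde\T_l(t,\tau)P(\tau)x_\tau$ stays in $\dom A$, is in $C^1(J_\tau;X)$, and satisfies $\dot v(t)=\widetilde A_l(t)v(t)$. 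Then $\V(t,\tau)x_\tau=P(t)^{-1}v(t)\in P(t)^{-1}\dom A$, and differentiating with~\eqref{eq:PinvDiff} and the product rule, using $v(t)=P(t)\V(t,\tau)x_\tau$, gives
\begin{align*}
	\ddt\V(t,\tau)x_\tau
	&=-P(t)^{-1}\dot P(t)P(t)^{-1}v(t)+P(t)^{-1}\widetilde A_l(t)v(t)\\
	&=\bigl(AP(t)+G(t)-P(t)^{-1}\dot P(t)\bigr)\V(t,\tau)x_\tau.
\end{align*}
Since $AP(t)=P(t)^{-1}\bigl(P(t)A\bigr)P(t)$ is similar, via $P(t)\in\Lscr(X)$, to the $C_0$-semigroup generator $P(t)A$, and $G(t)-P(t)^{-1}\dot P(t)\in\Lscr(X)$, the operators $AP(t)+G(t)-P(t)^{-1}\dot P(t)$ with domain $P(t)^{-1}\dom A$ are $C_0$-semigroup generators; together with the displayed Cauchy problem and the continuity properties carried over from Theorem~\ref{thm:generationL}, this shows that the family generates $\V$.

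The main obstacle is the second assertion. Theorem~\ref{thm:extensionL} is formulated for the specific averaged approximants~\eqref{eq:PnDef}, whereas $P(\cdot)G_n(\cdot)P(\cdot)^{-1}$ is \emph{not} the averaged approximant of $P(\cdot)G(\cdot)P(\cdot)^{-1}$, so one genuinely has to go back into that proof and verify that it rests on nothing beyond the three properties isolated above; the only non-formal point is the reduction of the required convergence to \cite[(3.21)]{ChenWeiss15} applied to $P(\cdot)^{-1}f(\cdot)$.
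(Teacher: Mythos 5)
Your proof is correct and follows essentially the same route as the paper's: conjugation by $P(\cdot)$ with $P(\cdot)^{-1}$ as the new weight, reducing all three assertions to Theorems~\ref{thm:generationL} and~\ref{thm:extensionL}. You are merely more explicit on two points the paper glosses over — you flag that $P(\cdot)G_n(\cdot)P(\cdot)^{-1}$ is not the canonical averaged approximant of $P(\cdot)G(\cdot)P(\cdot)^{-1}$ and correctly isolate the three properties the proof of Theorem~\ref{thm:extensionL} actually uses, and you prove the similarity-transformation generation statement by a direct computation where the paper simply cites \cite[Rem.~2.6]{SchWe10}.
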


The first two claims follow from the uniform boundedness of $P(\cdot)$ and its inverse on compact subintervals of $J$, together with Thms \ref{thm:generationL} and \ref{thm:extensionL}. By the latter, if $G(\cdot)$ is strongly continuously differentiable, then the evolution family $(t,\tau)\mapsto P(t)\V(t,\tau)P(\tau)^{-1}$ is generated by $P(t)A+P(t)G(t)P(t)^{-1}$. Then the last assertion follows from \cite[Rem.\ 2.6]{SchWe10}. 

Next we need the extrapolation space of $AP(t)$. This space can be identified with $X_{-1}$, and $A_{-1,r}(t):=A_{-1}P(t)+G(t)$ is the unique extension to $\Lscr(X;X_{-1})$ of 
\begin{equation}\label{eq:ArDef}
\begin{aligned}
	A_r(t)&:=AP(t)+G(t), \\
	\dom{A_r(t)}&:=P(t)^{-1}\dom A;
\end{aligned}
\end{equation}
see the paragraph before \cite[Def.\ 2.5]{SchWe10}. 

\begin{theorem}\label{thm:generationR}
With $\V$ defined in \eqref{eq:Vdef}, there exists a unique evolution family $\T_r$ with time interval $J$, such that
\begin{equation}\label{eq:TrInt}
\begin{aligned}
	&\T_r(t,\tau)x_\tau=\V(t,\tau)x_\tau \\
	&\qquad +\int_\tau^t \V(t,s)P(s)^{-1}\dot P(s)\T_r(s,\tau)x_\tau\ud s,
\end{aligned}
\end{equation}
for all $x_\tau\in X$ and $(t,\tau)\in\Delta_J$. This $\T_r$ is also the unique solution of
\begin{equation}\label{eq:TrInt2}
\begin{aligned}
	&\T_r(t,\tau)x_\tau=\V(t,\tau)x_\tau \\
	&\qquad +\int_\tau^t \T_r(t,s)P(s)^{-1}\dot P(s)\V(s,\tau)x_\tau\ud s.
\end{aligned}
\end{equation}

For every $\tau\in J$ with $\tau<\sup\,J$ and $x_\tau\in X$, 
\begin{equation}\label{eq:xdef}
	x(t):=\T_r(t,\tau)x_\tau,\quad t\in J_\tau,
\end{equation}
is in $C^1(J_\tau,X_{-1})$ and it solves the Cauchy problem \eqref{eq:CauchyLextr}, with $A_{-1,l}$ replaced by $A_{-1,r}$, in $X_{-1}$.
\end{theorem}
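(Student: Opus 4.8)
The plan is to regard \eqref{eq:TrInt} as a bounded, strongly continuous \emph{additive} perturbation of the evolution family $\V$ from Corollary \ref{cor:V}. Writing $Q(s):=P(s)^{-1}\dot P(s)\in\Lscr(X)$, which by \eqref{eq:standing} and \eqref{eq:PinvDiff} is strongly continuous and locally uniformly bounded on $J$, equation \eqref{eq:TrInt} reads $\T_r(t,\tau)=\V(t,\tau)+\int_\tau^t\V(t,s)\,Q(s)\,\T_r(s,\tau)\ud s$, which is precisely the configuration handled (with $P(\cdot)^{-1}A$ in place of $\V$ and $G$ in place of $Q$) in the proof of Thm \ref{thm:extensionL}; the arguments there use only that the unperturbed object is a locally exponentially bounded evolution family together with strong continuity and local uniform boundedness of the perturbing function. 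Concretely, I would fix a compact $[a,b]\subset J$ and solve this Volterra equation on $\Delta_{[a,b]}$ by Picard iteration, so that $\T_r=\sum_{k\ge0}\V_k$ with $\V_0:=\V$ and $\V_k(t,\tau):=\int_{\tau\le s_1\le\dots\le s_k\le t}\V(t,s_k)Q(s_k)\V(s_k,s_{k-1})\cdots Q(s_1)\V(s_1,\tau)\ud s_1\cdots\ud s_k$; by the local bounds this series converges in $\Lscr(X)$ uniformly on $\Delta_{[a,b]}$, producing a strongly continuous, locally exponentially bounded family $\T_r$ satisfying \eqref{eq:TrInt}, while uniqueness follows from Gr\"onwall's inequality exactly as in Step 2 of the proof of Thm \ref{thm:extensionL}. (Alternatively one runs the Howland evolution-semigroup argument of that proof, perturbing the generator of the Howland semigroup of $\V$ by the bounded multiplication operator $M_Q$.) Properties 1), 3), 4) in Def.\ \ref{def:evolfam} are then clear, and the cocycle identity 2) follows from uniqueness: for $\tau\le s\le t$, both $t\mapsto\T_r(t,\tau)$ and $t\mapsto\T_r(t,s)\T_r(s,\tau)$ solve the Volterra equation $Y(t)=\V(t,s)\T_r(s,\tau)+\int_s^t\V(t,r)Q(r)Y(r)\ud r$ on $[s,b]$, using the cocycle identity of $\V$ and \eqref{eq:TrInt} with base point $s$. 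Finally I would pass from compact $[a,b]$ to all of $J$ as at the end of the proof of Thm \ref{thm:generationL}.

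For the assertion that $\T_r$ also solves \eqref{eq:TrInt2} uniquely, I would observe that the $k$-th term $\V_k$ above is symmetric with respect to the two iteration schemes — iterating \eqref{eq:TrInt2} instead of \eqref{eq:TrInt} yields the same iterated integrals, only the order in which one splits off the factors $\V(t,s)Q(s)$ versus $Q(s)\V(s,\tau)$ differs — so $\sum_{k\ge0}\V_k=\T_r$ solves \eqref{eq:TrInt2} as well, and uniqueness of its solution there follows from a Gr\"onwall estimate run backwards in the lower limit of integration. (Equivalently, both variation-of-constants identities come at once from \cite[Cor.\ III.1.7]{EnNa00} applied to the Howland semigroup of $\V$ and the bounded operator $M_Q$.)

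For the Cauchy problem I would first record that $\V$ satisfies its own mild equation $\V(t,\tau)z-z=\int_\tau^t\bigl(A_{-1,r}(s)-Q(s)\bigr)\V(s,\tau)z\ud s$ in $X_{-1}$: when $G(\cdot)z\in C^1$ this is the $X_{-1}$-form of the Cauchy problem solved by $\V$ according to Corollary \ref{cor:V}, whose generator is $A_r(\cdot)-Q(\cdot)$ with extension $A_{-1,r}(\cdot)-Q(\cdot)\in\Lscr(X;X_{-1})$ (the extrapolation space of $A_r(t)$ from \eqref{eq:ArDef} being $X_{-1}$), and in the general case it follows from the $G_n$-approximation of Thm \ref{thm:extensionL} applied inside the construction \eqref{eq:Vdef} of $\V$ and conjugated by $P(\cdot)$, exactly as \eqref{eq:TlMild} was obtained. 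Substituting $x(s)=\T_r(s,\tau)x_\tau=\V(s,\tau)x_\tau+\int_\tau^s\V(s,r)Q(r)x(r)\ud r$ into this mild equation and exchanging the order of integration in $X_{-1}$ — legitimate since $A_{-1,r}(\cdot)$ is strongly continuous and locally uniformly bounded into $\Lscr(X;X_{-1})$ and $\V,Q$ are locally bounded, and using $\int_r^t\bigl(A_{-1,r}(s)-Q(s)\bigr)\V(s,r)w\ud s=\V(t,r)w-w$ for the $X$-vectors $w=Q(r)x(r)$ — yields $x(t)-x_\tau=\int_\tau^t A_{-1,r}(s)\,x(s)\ud s$ in $X_{-1}$. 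Since $x\in C(J_\tau;X)$, the map $s\mapsto A_{-1,r}(s)x(s)$ lies in $C(J_\tau;X_{-1})$, so this integral is continuously differentiable in $X_{-1}$ with derivative $A_{-1,r}(t)x(t)$; hence $x\in C^1(J_\tau;X_{-1})$ and solves \eqref{eq:CauchyLextr} with $A_{-1,l}$ replaced by $A_{-1,r}$, as asserted. This final passage is Step 4 of the proof of Thm \ref{thm:extensionL}.

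The step I expect to be the main obstacle is establishing the mild equation for $\V$ in the general case where $G(\cdot)z$ is only continuous, since Corollary \ref{cor:V} identifies the generator of $\V$ only under the extra hypothesis $G(\cdot)z\in C^1$. In general one has to re-run the $G_n$-approximation together with the extrapolation-space bookkeeping of Thm \ref{thm:extensionL} inside the conjugation \eqref{eq:Vdef}, controlling the time-dependent isomorphisms $P(t)$ and the (time-varying) norms on $X_{-1}$ and verifying that the extrapolation space of $A_r(t)$ is $X_{-1}$ uniformly in $t$ with $A_{-1,r}(\cdot)$ strongly continuous and locally bounded into $\Lscr(X;X_{-1})$ — routine but technical, and the point at which the standing requirement that both $P(\cdot)z$ and $P(\cdot)^{-1}z$ belong to $C^1(J;X)$ is genuinely needed.
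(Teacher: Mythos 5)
Your proposal is correct, but it follows a genuinely different route from the paper for the hardest part of the statement. For the first two assertions the paper simply invokes the proof of \cite[Prop.\ 2.7.a)]{SchWe10} on compact subintervals (plus \cite[Cor.\ 9.4]{CuPrBook} for uniqueness in \eqref{eq:TrInt2}); your explicit Picard/Dyson construction, the symmetry of the iterated simplex integrals giving \eqref{eq:TrInt2} for free, and the derivation of the cocycle identity from uniqueness are exactly what that citation packages, so here you are merely more self-contained. The real divergence is in the third assertion. The paper first shows (its Step 2) that $\T_r(t,\tau)$ is the strong limit of the adjoints $\mathbb S_n(t,\tau)^*$ of \emph{backward} evolution families generated by $A_n^\dagger(t)=P(t)A^*+G_n(t)^*$, by integrating $\frac{\partial}{\partial s}\Ipdp{\mathbb S_n(t,s)^*\V_n(s,\tau)w_n}{z}$ and identifying the limit with the unique solution of \eqref{eq:TrInt2}; it then obtains the mild equation $\T_r(t,\tau)x_\tau-x_\tau=\int_\tau^t A_{-1,r}(s)\T_r(s,\tau)x_\tau\ud s$ by dualizing $\frac{\partial}{\partial s}\mathbb S_n(s,\tau)z_0=\mathbb S_n(s,\tau)A_n^\dagger(s)z_0$ against arbitrary $x_\tau$ and letting $n\to\infty$. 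You avoid duality entirely: you derive the mild equation for $\V$ in $X_{-1}$ (via the $G_n$-approximation inside the conjugation \eqref{eq:Vdef}, exactly parallel to Step 3 of the proof of Thm \ref{thm:extensionL}), substitute the Volterra representation \eqref{eq:TrInt} into it, and apply Fubini; I checked the algebra and the $Q$-terms cancel as claimed. What each approach buys: the paper's duality argument reuses machinery (the backward families $\mathbb S_n$) that it needs again in Prop.\ \ref{prop:repR}, whereas your argument is shorter and stays entirely on the forward side, at the price of the technical step you correctly single out — establishing the mild equation for $\V$ when $G(\cdot)z$ is only continuous, including the bookkeeping identifying the extrapolation space of $AP(t)$ with $X_{-1}$ and the local uniform boundedness of $A_{-1,r}(\cdot)-P(\cdot)^{-1}\dot P(\cdot)$ in $\Lscr(X;X_{-1})$. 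Your sketch of how to fill that in (apply the last assertion of Cor.\ \ref{cor:V} to $\V_n$, integrate in $X$, pass to $X_{-1}$, let $n\to\infty$ using \eqref{eq:3.21}, extend by density from $P(\tau)^{-1}\dom A$) is sound, so I see no genuine gap.
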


Uniqueness in \eqref{eq:TrInt} and \eqref{eq:TrInt2} holds in the same sense as uniqueness in \eqref{eq:TlIntegEq}.

\begin{proof}
\emph{Step 1: Assertion one.} By temporarily restricting to compact subintervals of $J$, the proof of \cite[Prop.\ 2.7.a)]{SchWe10} gives the existence of the evolution family $\T_r$ that solves \eqref{eq:TrInt} and \eqref{eq:TrInt2} on all of $J$ as well as the uniqueness property for \eqref{eq:TrInt}. By \cite[Cor.\ 9.4]{CuPrBook}, \eqref{eq:TrInt2} also has at most one solution within the class described after the theorem statement.

\emph{Step 2: $\T_r$ is the limit of a sequence of evolution families whose adjoints have generators.} We modify the proof of \cite[Prop.\ 2.7.d)]{SchWe10} as follows, working on a compact subinterval $[a,b]\subset J$. On this interval $G(\cdot)$, $P(\cdot)$, $P(\cdot)^{-1}$ and $\dot P(\cdot)$ are uniformly bounded and the family
$$
\begin{aligned}
	\widehat A_n(t)&:=AP(t)+G_n(t)-P(t)^{-1}\dot P(t),\\
	\dom{\widehat A_n(t)}&:=P(t)^{-1}\dom A;
\end{aligned}
$$ 
generates an evolution family $\V_n$ with time interval $[a,b]$ by the proof sketch for Cor.\ \ref{cor:V}. For $t\in[a,b]$,
$$
	A_n^\dagger (t):= P(t)A^*+G_n(t)^*,\quad
	\dom{A_n^\dagger (t)}=\dom{A^*},
$$ 
generates a locally exponentially bounded \emph{backward} evolution family $\mathbb S_n$ with time interval $[a,b]$, see \cite[Def.\ 2.5]{SchWe10} and the discussion in step three of the proof of \cite[Prop.\ 5.1]{ChenWeiss15}, by the backward version of Thm \ref{thm:generationL}. We will prove that $\T_r(t,\tau)z=\lim_{n\to\infty} \mathbb S_n(t,\tau)^*z$ for all $z\in X$. 

First we calculate, for $w_n\in\dom {\widehat A_n}$ and $z\in\dom{A^*}$,
$$
\begin{aligned}
	\frac\partial{\partial s}\Ipdp{\mathbb S_n(t,s)^*\V_n(s,\tau)w_n}{z} &= \\
	\Ipdp{\widehat A_n(s)\V_n(s,\tau)w_n}{\mathbb S_n(t,s)z}\qquad& \\ -\Ipdp{\V_n(s,\tau)w_n}{A_n^\dagger(s)\mathbb S_n(t,s)z} &= \\
	-\Ipdp{P(s)^{-1}\dot P(s)\big)\V_n(s,\tau)w_n}{\mathbb S_n(t,s)z}&.
\end{aligned}
$$
Integrating this from $\tau$ to $t$ results in
\begin{equation}\label{eq:ConvWeakInt}
\begin{aligned}
	&\Ipdp{\mathbb S_n(t,\tau)^*w_n}{z} = \Ipdp{\V_n(t,\tau)w_n}{z} \\
	&\quad+\int_\tau^t\Ipdp{P(s)^{-1}\dot P(s)\V_n(s,\tau)w_n}{\mathbb S_n(t,s)z}\ud s.
\end{aligned}
\end{equation}
Assuming that $w_n\to w$ in $X$, we get $\V_n(s,\tau)w_n\to\V(s,\tau)w$ (since $\V_n$ have a uniform exponential bound on compact intervals which is independent of $n$ by the first assertion in Thm \ref{thm:extensionL}) and $\mathbb S_n(t,s)z\to \mathbb S(t,s)z$ for some $\mathbb S(t,s)z$, both uniformly on $s\in[\tau,t]$ by Cor.\ \ref{cor:V}. By the uniform boundedness of $P(s)^{-1}\dot P(s)$ on $[\tau,t]$, and the fact that $\mathbb S_n$ and $\V$ have some common exponential bound independent of $n$ and $s\in[\tau,t]$, we then get uniform convergence of the integrand, so that \eqref{eq:ConvWeakInt} tends to
$$
\begin{aligned}
	&\Ipdp{\mathbb S(t,\tau)^*w}{z} = \Ipdp{\V(t,\tau)w}{z} \\
	&\qquad+\int_\tau^t\Ipdp{\mathbb S(t,s)^*P(s)^{-1}\dot P(s)\V(s,\tau)w}{z}\ud s
\end{aligned}
$$
as $n\to\infty$, for $w\in X$ and $z\in\dom{A^*}$, and by denseness also for all $z\in X$. From the uniqueness of solution of \eqref{eq:TrInt2}, we get $\mathbb S(t,\tau)^*=\T_r(t,\tau)$.

\emph{Step 3: Assertion two.} Now we establish that $\T_r(\cdot,\tau)x_0$ solves the Cauchy problem \eqref{eq:CauchyLextr}, with $l$ replaced by $r$. We have $\frac\partial{\partial s}\mathbb S_n(s,\tau)z_0=\mathbb S_n(s,\tau)A_n^\dagger(s)z_0$ for all $z_0\in\dom{A^*}$ and $\tau\in[a,b]$. Integrating from $\tau$ to $t$, we get
$$
	\mathbb S_n(t,\tau)z_0-z_0=
	\int_\tau^t \mathbb S_n(s,\tau)A_n^\dagger(s)z_0\ud s,
$$
and for an arbitrary $x_\tau\in X$, we have
$$
\begin{aligned}
	&\Ipdp{\mathbb S_n(t,\tau)^*x_\tau}{z_0}-\Ipdp{x_\tau}{z_0} = \\
	&\quad\int_\tau^t \Ipdp{\big(A_{-1}P(s)+G_n(s)\big)\,\mathbb S_n(s,\tau)^*x_\tau}{z_0}_{X_{-1},\dom{A^*}}\ud s.
\end{aligned}
$$
Letting $n\to\infty$, strong convergence of $G_n$ to $G$ in $C([a,b];X)$ gives
$$
	\T_r(t,\tau)x_\tau-x_\tau = 
	\int_\tau^t A_{-1,r}(s)\,\T_r(s,\tau)x_\tau\ud s
$$
in $X_{-1}$. Proceeding as in step 3 of the proof of Thm \ref{thm:extensionL}, we get assertion three.
\end{proof}

We can say more about $\T_r$ if $P(\cdot)$ and $G(\cdot)$ are smoother:

\begin{proposition}\label{prop:GenRsmooth}
Assume that $P(\cdot)z\in C^2(J;X)$ and $G(\cdot)z\in C^1(J;X)$ for all $z\in X$.

The family $A_r(\cdot)$ in \eqref{eq:ArDef} generates $\T_r$ in Thm \ref{thm:generationR}, i.e., 
\begin{equation}\label{eq:TrDomMap}
	\T_r(t,\tau)\,\dom{A_r(\tau)}\subset\dom{A_r(t)}
\end{equation}
for $(t,\tau)\in\Delta_J$, and $x$ in \eqref{eq:xdef} is a solution in $C^1(J_\tau,X)$ of the Cauchy problem in $X$, instead of only in $X_{-1}$. The map
$$
	\Delta_J\ni (t,\tau)\mapsto AP(t)\,\T_r(t,\tau)\,\big(I-AP(\tau)\big)^{-1}z
$$
is continuous in $X$, for all $z\in X$.
\end{proposition}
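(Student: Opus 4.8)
The plan is to reduce the statement to the constant-domain generation theorem, Theorem~\ref{thm:generationL}, by conjugating with $P(\cdot)$. First I would apply Theorem~\ref{thm:generationL} with the \emph{modified} data $\bigl(P(\cdot)^{-1},\,P(\cdot)G(\cdot)P(\cdot)^{-1}+\dot P(\cdot)P(\cdot)^{-1}\bigr)$ in the roles of $(P,G)$: the first entry is again self-adjoint, positive, boundedly invertible and strongly $C^1$, and the second entry is strongly $C^1$ \emph{precisely because} $P(\cdot)z\in C^2(J;X)$ and $G(\cdot)z\in C^1(J;X)$ — this is the only place the extra smoothness of $P$ enters, and its derivative is computed from \eqref{eq:PinvDiff} and the product rule. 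Theorem~\ref{thm:generationL} then provides an evolution family $\widehat\T_r$ on $X$ with time interval $J$, generated in the full sense of Definition~\ref{def:evolfam} by the constant-domain family $\widehat A(t):=P(t)A+P(t)G(t)P(t)^{-1}+\dot P(t)P(t)^{-1}$, $\dom{\widehat A(t)}=\dom A$, and inheriting all conclusions of Theorem~\ref{thm:generationL}: invariance of $\dom A$, $C^1(J_\tau;X)$ solutions of $\dot v=\widehat A(t)v$, the analogue of \eqref{eq:AlTl} (hence also continuity of $(t,\tau)\mapsto A\,\widehat\T_r(t,\tau)v_0$ on $\Delta_J$ for $v_0\in\dom A$, with an $X_1$-exponential bound on compacts), and the backward-derivative formula \eqref{eq:TlDtau}.

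Next I would identify $\T_r(t,\tau)=P(t)^{-1}\widehat\T_r(t,\tau)P(\tau)$. Note $\widehat A(t)=\widehat A_l(t)+\dot P(t)P(t)^{-1}$, where $\widehat A_l(t):=P(t)A+P(t)G(t)P(t)^{-1}$ generates the evolution family $(t,\tau)\mapsto P(t)\V(t,\tau)P(\tau)^{-1}$ from Corollary~\ref{cor:V} (its $C^1$-in-$G$ case is in force), which, being again of the form covered by Theorem~\ref{thm:generationL} with data $(P(\cdot)^{-1},P(\cdot)G(\cdot)P(\cdot)^{-1})$, also satisfies \eqref{eq:TlDtau}. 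Differentiating $s\mapsto [P(t)\V(t,s)P(s)^{-1}]\,\widehat\T_r(s,\tau)x_0$ for $x_0\in\dom A$ with the backward-derivative formula for $P(\cdot)\V(\cdot,\cdot)P(\cdot)^{-1}$ and the equation $\dot v=\widehat A(s)v$, integrating in $s$, and passing from $x_0\in\dom A$ to $x_0\in X$ by density, yields the Duhamel identity
\[
	\widehat\T_r(t,\tau)=P(t)\V(t,\tau)P(\tau)^{-1}+\int_\tau^t P(t)\V(t,s)P(s)^{-1}\dot P(s)P(s)^{-1}\,\widehat\T_r(s,\tau)\ud s .
\]
Conjugating by $P(\cdot)^{-1}$ on the left and $P(\cdot)$ on the right shows that $P(\cdot)^{-1}\widehat\T_r(\cdot,\cdot)P(\cdot)$ solves \eqref{eq:TrInt}, so by the uniqueness in Theorem~\ref{thm:generationR} it equals $\T_r$. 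Then \eqref{eq:TrDomMap} is immediate, as $\T_r(t,\tau)\dom{A_r(\tau)}=P(t)^{-1}\widehat\T_r(t,\tau)\dom A\subset P(t)^{-1}\dom A=\dom{A_r(t)}$; and for $x_\tau\in\dom{A_r(\tau)}$, putting $v(t):=\widehat\T_r(t,\tau)P(\tau)x_\tau\in C^1(J_\tau;X)$ makes $x(t)=P(t)^{-1}v(t)\in C^1(J_\tau;X)$, and differentiating $x=P^{-1}v$ via \eqref{eq:PinvDiff} and inserting $\dot v=\widehat A(t)v$, the two occurrences of $P(t)^{-1}\dot P(t)x(t)$, carrying opposite signs, cancel, leaving $\dot x(t)=AP(t)x(t)+G(t)x(t)=A_r(t)x(t)$ in $X$.

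For the last assertion, I would first record that $1\in\res{AP(\tau)}$, because $AP(\tau)$ is similar, via $P(\tau)$, to $P(\tau)A$, which generates a contraction semigroup for the equivalent inner product $\Ipdp{P(\tau)^{-1}\,\cdot}{\,\cdot}_X$; consequently $(I-AP(\tau))^{-1}\in\Lscr\bigl(X;\dom{A_r(\tau)}\bigr)$ and $(I-AP(\tau))^{-1}=P(\tau)^{-1}(I-P(\tau)A)^{-1}P(\tau)$. Using $\T_r(t,\tau)=P(t)^{-1}\widehat\T_r(t,\tau)P(\tau)$ and $AP(t)P(t)^{-1}=A$ on $\dom A$, the map in question reduces to $(t,\tau)\mapsto A\,\widehat\T_r(t,\tau)w_\tau$ with $w_\tau:=(I-P(\tau)A)^{-1}P(\tau)z\in\dom A$. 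Here $(t,\tau)\mapsto A\,\widehat\T_r(t,\tau)w$ is continuous on $\Delta_J$ for fixed $w\in\dom A$ (the analogue of \eqref{eq:AlTl} for $\widehat\T_r$, composed with $A$) and, by the $X_1$-exponential bound on compacts, locally Lipschitz in $w\in\dom A$ for the graph norm, hence jointly continuous in $(t,\tau,w)$; and $\tau\mapsto w_\tau$ is continuous into $X$ (strong continuity of $\tau\mapsto(I-P(\tau)A)^{-1}$, from the resolvent identity and strong continuity of $P(\cdot)$), hence continuous into $\dom A$ for the graph norm because $Aw_\tau=P(\tau)^{-1}w_\tau-z$. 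Composing gives the claim.

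The main obstacle I anticipate is not a single estimate but the conceptual step of upgrading from the $X_{-1}$-level generation already supplied by Theorem~\ref{thm:generationR} to generation in the full sense of Definition~\ref{def:evolfam} — invariance of the domains and $C^1(J_\tau;X)$ solutions. The device that makes this work is the conjugation by $P(\cdot)$: it converts the $t$-dependent domains $P(t)^{-1}\dom A$ into the constant domain $\dom A$ at the price of the additive term $\dot P(\cdot)P(\cdot)^{-1}$, and $P(\cdot)z\in C^2(J;X)$ is exactly what keeps $P(\cdot)G(\cdot)P(\cdot)^{-1}+\dot P(\cdot)P(\cdot)^{-1}$ strongly $C^1$, so that Theorem~\ref{thm:generationL} applies without modification. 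Consistency with Theorem~\ref{thm:generationR} is automatic, since the $X_{-1}$-solution it produces is continuous into $X$ and solves \eqref{eq:TrInt}.
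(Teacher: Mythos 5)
Your proposal is correct and follows essentially the same route as the paper: the paper's proof also applies Theorem~\ref{thm:generationL} to the conjugated family $A_2(t)=P(t)A+P(t)G(t)P(t)^{-1}+\dot P(t)P(t)^{-1}$ (your $\widehat A$), transfers generation to $A_r$ by conjugating with $P(\cdot)^{-1}$, and identifies the result with $\T_r$ by differentiating $s\mapsto\widetilde\T_l(t,s)\mathbb S(s,\tau)z$ to obtain the Duhamel identity and invoking uniqueness in \eqref{eq:TrInt}. The only difference is presentational: the paper delegates the conjugation step and the final continuity claim to \cite[Rem.\ 2.6]{SchWe10} and \cite[Prop.\ 2.8.a)]{SchWe10}, whereas you carry out those computations explicitly.
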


\begin{proof}
Due to Thm \ref{thm:generationL}, the family
$$
	A_2(t):=P(t)A+P(t)\,G(t)P(t)^{-1}+\dot P(t)P(t)^{-1},~ t\in J,
$$ 
with $\dom{A_2(t)}=\dom A$, generates an evolution family $\mathbb S$ such that $(t,\tau)\mapsto A_2(t)\,\mathbb S(t,\tau)x_\tau$ is in $C(\Delta_J;X)$ for all $x_\tau\in \dom A$, since $P(t)\,G(t)P(t)^{-1}+\dot P(t)P(t)^{-1}$ is strongly continuously differentiable. Then $A_r(t)$ generates the evolution family $(t,\tau)\mapsto P(t)^{-1}\mathbb S(t,\tau)P(\tau)$ by the proof sketch of Cor.\ \ref{cor:V}. The family
$P(t)A+P(t)G(t)P(t)^{-1}$ also generates a locally exponentially bounded evolution family $\widetilde\T_l$, and by Cor.\ \ref{cor:V}, $P(t)^{-1}\widetilde\T_l(t,\tau)P(\tau)=\V(t,\tau)$. By Thm \ref{thm:generationL}, for $z\in\dom A$, $\mathbb S(t,\tau)z\in\dom A$ and
$$
	\frac{\partial}{\partial s} \widetilde\T_l(t,s)\mathbb S(s,\tau)z =
	\widetilde\T_l(t,s)\dot P(s)P(s)^{-1}\mathbb S(s,\tau)z.
$$
Proceeding as in the proof of \cite [Prop.2.8.a)]{SchWe10}, we get $P(t)^{-1}\mathbb S(t,\tau)P(\tau)=\T_r(t,\tau)$ and the claimed continuity.
\end{proof}

\section{Well-posedness of $\Sigma_l$ and $\Sigma_r$}\label{sec:WP}

We use the Lax-Phillips evolution families associated to  \eqref{eq:Sigmal} and \eqref{eq:Sigmar} in order to prove the well-posedness of these time-varying linear systems and their passivity property.

\subsection{Multiplicative perturbation from the left}

Let $\sbm{A&B\\\overline C&D}$ be a time-invariant, passive well-posed system, with Lax-Phillips semigroup $\Tfrak^i$ on $\mathcal H$, generated by the maximal dissipative operator $\Afrak$ in \eqref{eq:LPsgGen}. By a \emph{classical trajectory} of the time-varying system
\begin{equation}\label{eq:AlSys}
	\bbm{P(t)\,\dot x(t)\\y(t)}=\bbm{A_{-1}+P(t)\,G(t)&B\\\overline C&D}\bbm{x(t)\\u(t)}
\end{equation}
on $J_\tau$, $\tau\in J$ with $\tau<\sup\, (J)$, we mean a triple $(u,x,y)\in C(J_\tau;U)\times C^1(J_\tau;X)\times C(J_\tau;Y)$, such that \eqref{eq:AlSys} holds for all $t\in J_\tau$.

For $t\in J$, define the following operators in $\Lscr(\Hscr)$:
$$
	\Pfrak(t):=\bbm {I&0&0 \\ 0&P(t)&0 \\ 0&0&I},\quad\text{and}\quad
	\Gfrak(t):=\bbm {0&0&0 \\ 0&G(t)&0 \\ 0&0&0},
$$
so that $\Afrak_l(t):=\Pfrak(t)^{-1}\Afrak+\Gfrak(t)$ equals
$$
	\Afrak_l(t)\bbm{y\\x_0\\u} =\bbm{y'\\P(t)^{-1}\big(A_{-1}x_0+Bu(0)\big)+G(t)x_0\\u'},
$$
\begin{equation}\label{eq:LPlGen}
	\qquad\text{with}\quad \dom{\Afrak_l(t)}=\dom\Afrak,\quad t\in J.
\end{equation}
Thm \ref{thm:extensionL} implies that the family $\Afrak_l(t)$ generates an evolution family $\Tfrak_l$ on $\Hscr$ with time interval $J$ if $G(\cdot)$ is strongly continuously differentiable, and we will prove that $\Tfrak_l$ is the Lax-Phillips evolution family  of a (unique, time-varying) well-posed system $\Sigma_l$, whose classical trajectories with smooth data are determined by \eqref{eq:Sigmal}.

\begin{theorem}\label{thm:Sigmal}
Let $\tau\in J$ be such that $\tau<\sup J$. There is a well-posed system $\Sigma_l=\sbm{\T_l&\Phi_l\\\Psi_l&\F_l}$ with Lax-Phillips evolution family $\Tfrak_l$ and time interval $J$, such that the evolution family $\T_l$ of $\Sigma_l$ has the properties asserted in Thm \ref{thm:extensionL}, and moreover:
\begin{enumerate}
\item Let $(u,x,y)$ be a classical trajectory of \eqref{eq:AlSys} on $J_\tau$. Then $x\in C(J_\tau;Z)$, where $Z$ is the solution space in \eqref{eq:solspace}. Moreover, if $(x(\tau),u)$ is in
$$
\begin{aligned}
	V(\tau)&:=\bigg\{\bbm{x_\tau\\u}\in \bbm{X\\ H^1(J;U)}\biggmid \\
	&\qquad\qquad\qquad\qquad\quad A_{-1}x_\tau+Bu(\tau)\in X\bigg\},
\end{aligned}
$$
then $(u,x,y)$ is a trajectory of $\Sigma_l$ on $J_\tau$ with $x(\tau)=x_\tau$:
\begin{equation}\label{eq:SigmalTraj}
\begin{aligned}
	x(t)&=\T_l(t,\tau)x_\tau+\Phi_l(t,\tau)u,\\
	\proj_{[\tau,t]}y&=\Psi_l(t,\tau) x_\tau+\F_l(t,\tau)u,\quad t\in J_\tau.
\end{aligned}
\end{equation}

\item For $x_\tau\in X$ and $u\in L^2(J_\tau;X)$, the function
$$
	x(t):=\T_l(t,\tau)x_\tau+\Phi_l(t,\tau)u,\qquad t\in J_\tau,
$$
is in $H^1_{loc}(J_\tau;X_{-1})$ and it satisfies 
$$
	\dot x(t)=A_{-1,l}(t)\,x(t)+Bu(t)
$$
in $X_{-1}$ for almost all $t\in J_\tau$, where $A_{-1,l}$ is in \eqref{eq:Amin1lexp}.

\item Every trajectory $(u,x,y)$ of $\Sigma_l$ on $J_\tau$ satisfies the energy inequality \eqref{eq:SigmalEnergy}, for all $t\in J_\tau$, and it is uniquely determined by $x(\tau)$ and $u$.

\item[4)] Every classical trajectory of \eqref{eq:AlSys} satisfies
\begin{equation}\label{eq:SigmalPower}
\begin{aligned}
	&\ddt\Ipdp{P(t)x(t)}{x(t)}\leq \Ipdp{\dot P(t)x(t)}{x(t)}\\
		&\qquad +\|u(t)\|^2-\|y(t)\|^2\\
		&\qquad +2\re\Ipdp{P(t)x(t)}{G(t)x(t)},\quad t\in J_\tau.
\end{aligned}
\end{equation}
\end{enumerate}

If additionally $G(\cdot)z\in C^1(J;X)$ for all $z\in X$, then $\T_l$ has the properties asserted in Thm \ref{thm:generationL}, and further:
\begin{enumerate}
\item[5)] The Lax-Phillips evolution family $\Tfrak_l$ of $\Sigma_l$ is generated by $\Afrak_l(t)$, $t\in J$, in the sense of Def.\ \ref{def:evolfam}.

\item[6)] For every $(x_\tau,u)\in V(\tau)$ there is a (unique) classical trajectory $(u,x,y)$ of \eqref{eq:AlSys} on $J_\tau$ with $x(\tau)=x_\tau$. The output of this trajectory satisfies $y\in H_{loc}^1(J_\tau;Y)$.
\end{enumerate}

If $\Sigma_i$ is energy preserving then the inequality holds with equality in \eqref{eq:SigmalEnergy} and \eqref{eq:SigmalPower}.
\end{theorem}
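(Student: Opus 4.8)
The plan is to realize $\Sigma_l$ concretely through its Lax-Phillips evolution family $\Tfrak_l$ on $\Hscr=L^2(\rminus;Y)\times X\times L^2(\rplus;U)$ and then read off all the stated properties from the block structure of $\Tfrak_l$ and from Theorems \ref{thm:generationL}, \ref{thm:extensionL} applied to $\Afrak_l(\cdot)=\Pfrak(\cdot)^{-1}\Afrak+\Gfrak(\cdot)$. First I would check that $\Pfrak(\cdot)$, $\Gfrak(\cdot)$ satisfy the standing hypotheses \eqref{eq:standing} on $\Hscr$: $\Pfrak(t)=\Pfrak(t)^*\geq0$ with bounded inverse, $\Pfrak(\cdot)z$, $\Pfrak(\cdot)^{-1}z\in C^1$, $\Gfrak(\cdot)z\in C$, all inherited blockwise from $P(\cdot)$, $G(\cdot)$; and that $\Afrak$ is maximal dissipative on $\Hscr$ by passivity of $\Sigma_i$ (this is \cite[Prop.\ 3.1]{SchWe10}, already cited). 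Thm \ref{thm:extensionL} then gives a locally exponentially bounded evolution family $\Tfrak_l$ on $\Hscr$. The crucial structural point is that $\Tfrak_l$ must be block-upper-triangular of the form \eqref{eq:LPevol}, with the correct shift entries on the diagonal: since $\Afrak_l(t)$ acts on the outer two components of $(y,x_0,u)$ simply by differentiation (independently of $t$), the first and third diagonal blocks of $\Tfrak_l(t,\tau)$ are the frozen shifts $\shift_{t-\tau}^-$ and $\shift_{t-\tau}^+$; the strictly-upper blocks and the middle block are forced by causality and the coupling in $\Afrak_l$. Formally I would verify this by checking that the family $\Tfrak$ built from \eqref{eq:LPevol} (with $\T_l$ the state evolution family and $\Phi_l,\Psi_l,\F_l$ defined by the corresponding blocks of $\Tfrak_l$) satisfies the integral equation \eqref{eq:TlIntegEq} characterizing $\Tfrak_l$ uniquely — the triangular structure is preserved under the Duhamel iteration because $\Gfrak(t)$ is supported in the middle block and the unperturbed $\U$ (generated by $\Pfrak(\cdot)^{-1}\Afrak$) is itself triangular by the Lax-Phillips structure of the frozen-time systems. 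Once $\Tfrak_l$ has the form \eqref{eq:LPevol}, Thm \ref{thm:LPsys} immediately yields that $(\T_l,\Phi_l,\Psi_l,\F_l)$ is a well-posed system $\Sigma_l$, and the properties of $\T_l$ come directly from Thm \ref{thm:extensionL} (and Thm \ref{thm:generationL} under the extra $C^1$ hypothesis on $G$).

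For the numbered assertions: (2) is essentially a restatement of the $X_{-1}$-Cauchy-problem part of Thm \ref{thm:extensionL} applied to the middle component, combined with the fact that $\Phi_l(t,\tau)u$ is the $X$-component of $\Tfrak_l(t,\tau)(0,0,\shift_\tau u)$, whose $X_{-1}$-valued derivative is $A_{-1,l}(t)x(t)+Bu(t)$ because the $\Afrak_l$-action reads off $u(0)$ through $B$ after shifting; the $H^1_{\loc}(J_\tau;X_{-1})$ regularity follows from local uniform boundedness of $A_{-1,l}(\cdot)$ (proved inside Thm \ref{thm:extensionL}, step 3) and of $B$. For (1), a classical trajectory $(u,x,y)$ of \eqref{eq:AlSys} has $\dot x\in C$ and $A_{-1}x+Bu\in X$ pointwise, which is exactly the statement $x(t)\in Z$ with continuity in the graph norm \eqref{eq:Znorm}, using that $P(t)\dot x(t)-P(t)G(t)x(t)=A_{-1}x(t)+Bu(t)$ lies in $X$ and depends continuously on $t$; then, given $(x_\tau,u)\in V(\tau)$, the triple solves the same $X_{-1}$-Cauchy problem as the $\Sigma_l$-trajectory with that data, so uniqueness of solutions to \eqref{eq:CauchyLextr} forces them to coincide — this is where I expect to invoke the uniqueness remark following Thm \ref{thm:extensionL} (and the referenced \cite[Thm 3.9]{ChenWeiss15}-type argument, which the authors flag as delicate). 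Assertion (3): uniqueness of trajectories from $(x(\tau),u)$ is a general property of well-posed systems (stated in \S\ref{sec:prel}); the energy inequality \eqref{eq:SigmalEnergy} follows from (2) by differentiating $t\mapsto\Ipdp{P(t)x(t)}{x(t)}$ in the duality pairing $X_{-1},X_1^d$, using the product rule for $\dot P$ and the passivity inequality \eqref{eq:passpow} for the frozen system (which controls $2\re\Ipdp{A_{-1}x+Bu}{x}$ by $\|u\|^2-\|y\|^2$), then integrating from $\tau$ to $t$; for general $L^2$ data one approximates by smooth data and passes to the limit using continuity of all the operator families. Assertion (4) is the pointwise/differential form of (3) for classical trajectories, obtained the same way without integrating.

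For the final ``$G\in C^1$'' part: (5) is the statement that $\Afrak_l(\cdot)$ generates $\Tfrak_l$ in the Def.\ \ref{def:evolfam} sense, which is precisely the conclusion of Thm \ref{thm:generationL} applied on $\Hscr$ (the hypothesis there is exactly $\Gfrak(\cdot)z\in C^1$, inherited from $G(\cdot)z\in C^1$). (6): given $(x_\tau,u)\in V(\tau)$, one sets $(y_0,x_\tau,\shift_\tau u)$ — more precisely the element of $\dom\Afrak_l(\tau)=\dom\Afrak$ forced by the compatibility conditions $A_{-1}x_\tau+Bu(\tau)\in X$ and $y(0)=\overline C x_\tau+Du(\tau)$, which is where the $H^1$-in-time hypothesis on $u$ and the $V(\tau)$ condition enter — and applies $\Tfrak_l(\cdot,\tau)$; by (5) the orbit stays in $\dom\Afrak_l(t)$ and is $C^1$ into $\Hscr$, and reading off the three components gives $y\in C(J_\tau;Y)$ with $y\in H^1_{\loc}$ (from $y$-component $\in H^1(\rminus;Y)$ pointwise in $t$, sewn together), $x\in C^1(J_\tau;X)$, and $u$ recovered; that this triple solves \eqref{eq:AlSys} in the classical sense is the translation of the $\Afrak_l$-action back to system form, and uniqueness is from (3). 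The energy-preserving addendum: if \eqref{eq:passpow} holds with equality then every inequality above was an equality, so \eqref{eq:SigmalEnergy} and \eqref{eq:SigmalPower} become equalities. The main obstacle I anticipate is (1)/(3): rigorously matching a classical trajectory of \eqref{eq:AlSys} with an abstract $\Sigma_l$-trajectory, i.e.\ the uniqueness of solutions to the $X_{-1}$-Cauchy problem \eqref{eq:CauchyLextr} for the time-varying coupled system — the authors themselves remark this is ``surprisingly tricky'' and that the $C_0$-semigroup argument of \cite{ChenWeiss15} does not transfer directly — so I would isolate that as a lemma (proved via the Lax-Phillips reformulation, where uniqueness for the whole coupled block system reduces to uniqueness in \eqref{eq:TlIntegEq} on $\Hscr$, already available from Thm \ref{thm:extensionL}) rather than attempting it componentwise.
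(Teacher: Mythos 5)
Your overall architecture --- apply Thm \ref{thm:extensionL} to $\Afrak_l(\cdot)=\Pfrak(\cdot)^{-1}\Afrak+\Gfrak(\cdot)$ on $\Hscr$, read off the block structure \eqref{eq:LPevol}, invoke Thm \ref{thm:LPsys}, and then obtain items 2), 4), 5), 6) from the $X_{-1}$-Cauchy problem, passivity, and the smooth generation theory --- coincides with the paper's, and those parts are sound (the paper obtains the triangular structure of $\Tfrak_l$ as the strong limit of the Lax--Phillips families $\Tfrak_n$ of the $G_n$-smoothed systems rather than by your Duhamel-preserves-triangularity argument, but either route works).

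The genuine gap is in item 1), the identification of a classical trajectory of \eqref{eq:AlSys} with the $\Sigma_l$-trajectory having the same data. You propose to deduce this from uniqueness of solutions of the $X_{-1}$-Cauchy problem \eqref{eq:CauchyLextr}, ``invoking the uniqueness remark following Thm \ref{thm:extensionL}''. But that remark asserts uniqueness of the two-parameter \emph{operator family} solving the integral equation \eqref{eq:TlIntegEq} for all initial data; it says nothing about uniqueness of an individual solution of \eqref{eq:TlMild} or \eqref{eq:CauchyLextr}, and the paper explicitly flags that it could \emph{not} prove the latter directly --- the remark immediately after Thm \ref{thm:Sigmal} obtains that uniqueness only as a \emph{corollary} of the theorem, so invoking it in the proof is circular. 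Your fallback (lift to $\Hscr$ and use uniqueness in \eqref{eq:TlIntegEq} there) fails for the same reason, since a single orbit is not an operator family; moreover the Lax--Phillips lift of a general classical trajectory of \eqref{eq:AlSys} is not differentiable in $\Hscr$, because its output is merely continuous, so the Duhamel identity for the lifted orbit is not even available. The paper's mechanism is different: first, the passivity inequality \eqref{eq:passpow} yields \eqref{eq:SigmalPower} and \eqref{eq:SigmalEnergy} for classical trajectories, and a Gr\"onwall argument on $\Ipdp{P(t)x(t)}{x(t)}+\int_\tau^t\|y(s)\|^2\ud s$ gives uniqueness within any linear class of triples satisfying \eqref{eq:SigmalEnergy}; second, for non-smooth $G$ one compares the given classical trajectory $x$ with the classical trajectories $x_n$ of the $G_n$-smoothed systems, noting that $w_n=x_n-x$ solves an inhomogeneous Cauchy problem governed by the \emph{smooth} generator family $A_n(\cdot)$ with source $(G_n-G)x\to0$, for which the Kato--Pazy variation-of-constants formula and uniqueness are available, whence $w_n\to0$ and (with a similar passivity estimate) $y_n\to y$. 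You need some such substitute; without it, item 1) --- and with it the identification $\widetilde\T_l=\T_l$ in the smooth case and the ``unique'' in item 6) --- remains unproven.
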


Hence, solutions $x\in C^1(J_\tau;X)$ of \eqref{eq:TlMild}, or equivalently, of \eqref{eq:CauchyLextr}, for which $x_\tau\in \dom A$, are unique. Indeed, they satisfy $x\in C(J_\tau;\dom A)\subset C(J_\tau;Z)$, see the penultimate paragraph of \S\ref{sec:prel}, and then $(0,x,y)$ is a classical trajectory of \eqref{eq:AlSys} with $y(t):=\overline C x(t)$, $t\in J_\tau$, and $G(\cdot):=0$.

\begin{proof}
\emph{Step 1: Power and energy balance for classical trajectories. Uniqueness of trajectories.} First let $(u,x,y)$ be an arbitrary classical trajectory of \eqref{eq:AlSys}; then
$$
	\bbm{P(t)\dot x(t)-P(t)G(t)x(t)\\y(t)}=
	\bbm{A_{-1}&B\\\overline C&D}\bbm{x(t)\\u(t)},
$$
and \eqref{eq:passpow} gives
$$
\begin{aligned}
	\|u(t)\|^2-\|y(t)\|^2&\geq\\ 
	2\re \Ipdp{P(t)\dot x(t)-P(t)G(t)x(t)}{x(t)} &= \\
	2\re \Ipdp{P(t)\dot x(t)}{x(t)} 
		-2\re \Ipdp{G(t)x(t)}{P(t)x(t)}&,
\end{aligned}
$$
with equality if $\Sigma_i$ is energy preserving, and observing that
$$
\begin{aligned}
	\ddt\Ipdp{P(t)x(t)}{x(t)}&= \\
	\Ipdp{\dot P(t)x(t)}{x(t)}+2\re\Ipdp{P(t)\dot x(t)}{x(t)}&,
\end{aligned}
$$
we get \eqref{eq:SigmalPower}. Integrating \eqref{eq:SigmalPower} from $\tau$ to $t\geq\tau$, we get \eqref{eq:SigmalEnergy}. 

In every linear set of triples $(u,x,y)$ that satisfy \eqref{eq:SigmalEnergy}, there is at most one $(u,x,y)$ with a particular choice of $x_\tau:=x(\tau)$ and $u$; indeed, setting $\phi(t):=\Ipdp{P(t)x(t)}{x(t)}+\int_\tau^t\|y(s)\|^2\ud s$, we get from \eqref{eq:SigmalEnergy} that
$$
\begin{aligned}
	\phi(t)&\leq \alpha(t) +M\int_\tau^t \phi(s)\ud s,
	\qquad\text{where} \\
	\alpha(t)&:=\Ipdp{P(\tau)x(\tau)}{x(\tau)}+\int_\tau^t \|u(s)\|^2\ud s,
\end{aligned}$$
is non-decreasing in $t$ and
$$ 	
\begin{aligned}
	M:=\sup_{s\in[\tau,t]} &\big( \|P(s)^{-1/2}\dot P(s)P(s)^{-1/2}\| \\
	&\quad +2 \,\|P(s)^{1/2}G(s)P(s)^{-1/2}\|\big),
\end{aligned}
$$
and Gr\"onwall's inequality then gives that
$$
	\phi(t)\leq\alpha(t)\,e^{M(t-\tau)},
$$
where $\alpha(t)$ is identically zero if $x(\tau)=0$ and $u=0$.

\emph{Step 2: If $G(\cdot)$ is strongly continuously differentiable.} Following steps one and two of the proof of \cite[Thm 4.1]{SchWe10} and using Thm \ref{thm:generationL}, we get the following: $\Afrak_l(\cdot)$ generates an evolution family $\Tfrak_l$ on $\Hscr$ with time interval $J$, and for all $(x_\tau,u)\in V(\tau)$ there exist $\widetilde y\in H^1(\rminus;Y)$ and $\widetilde u\in H^1((\tau,\infty);U)$ such that $\widetilde y(0)=\overline Cx_\tau+Du(\tau)$ and $\proj_{J_\tau}\widetilde u=u$. Then $(\widetilde y,x_\tau,\shift_\tau\widetilde u)\in \dom{\Afrak}$ and 

\begin{equation}\label{eq:LPlInterp}
	\bbm{y(t)\\x(t)\\u(t)}:= \bbm{\delta_0&0&0\\0&I&0\\0&0&\delta_0} 
	\Tfrak_l(t,\tau) \bbm{\widetilde y\\x_\tau\\\shift_\tau \widetilde u}
\end{equation}
defines a classical trajectory $(u,x,y)$ of \eqref{eq:AlSys} on $J_\tau$, with initial state $x(\tau)=x_\tau$ and output satisfying $\proj_{[\tau,t]}y\in H^1([\tau,t];Y)$ for $t\in J_\tau$; here $\delta_s$ is point evaluation at $s$.

Item 5) is proved in step five of the proof of \cite[Thm 4.1]{SchWe10}. Hence, items 4)--6) are proved and 1) holds for trajectories as in 6) if $G(\cdot)z\in C^1(J;X)$.

\emph{Step 3: Proving 1).} Now we drop the additional smoothness assumption on $G$, and we roughly follow the proof of \cite[Thm 5.3(b--c)]{ChenWeiss15}. Let $\Gfrak_n$ be defined as $\Gfrak$, but using the averaged function $G_n$ in \eqref{eq:PnDef}, instead of $G$, so that
$$
	\Afrak_{n}(t):=\Pfrak(t)^{-1}\Afrak+\Gfrak_n(t),
	\quad \dom{\Afrak_{n}(t)}=\dom\Afrak,
$$
generates the locally exponentially bounded Lax-Phillips evolution family $\Tfrak_{n}$ of a well-posed system $\Sigma_n$ with time interval $J$. By Thm \ref{thm:extensionL}, $\Tfrak_n(t,\tau)w$ tends uniformly to some $\Tfrak_l(t,\tau)w$ in $\Hscr$ on $\Delta_{[a,b]}$ for compact $[a,b]\subset J$ and all $w\in\Hscr$. Then $\Tfrak_l$ has the structure \eqref{eq:LPevol} and the operator families in $\Tfrak_l$ inherit causality \eqref{eq:WPcausal} from those in $\Tfrak_n$, so that $\Tfrak_l$ is the Lax-Phillips evolution family of a well-posed system $\Sigma_l$ by Thm \ref{thm:LPsys}.

For $(u,x,y)$ an arbitrary classical trajectory of \eqref{eq:AlSys},
$$
\begin{aligned}
	x(t)&=(I-A_{-1})^{-1}\big( (I+P(t)G(t))x(t)-P(t)\dot x(t) \big) \\
	&\qquad +(I-A_{-1})^{-1}Bu(t),
\end{aligned}
$$
where the first term is in $C(J_\tau;\dom A)$ which is contained in $C(J_\tau;Z)$ due to the continuity of the embedding $\dom A\to Z$, and the second term is in $C(J_\tau;Z)$ since $(I-A_{-1})^{-1}B\in \Lscr(U,Z)$; see the discussion around \eqref{eq:Znorm}.

Now moreover assume that $(x(\tau),u)\in V(\tau)$. Further let $(u,x_n,y_n)$ be the unique classical trajectory of \eqref{eq:AlSys} with $G$ replaced by $G_n$, such that $x_n(\tau)=x_\tau:=x(\tau)$; then
\begin{equation}\label{eq:trajn}
\begin{aligned}
	x_n(t)&=\T_n(t,\tau)x_\tau+\Phi_n(t,\tau)u,\\
	\proj_{[\tau,b]}\,y_n&=\Psi_n(b,\tau) x_\tau+\F_n(b,\tau)u,\quad t,b\in J_\tau.
\end{aligned}
\end{equation}
Letting $n\to\infty$, we get from the construction of $\Sigma_l$ that $x_n\to x_\infty:=\T_l(\cdot,\tau)x_\tau+\Phi_l(\cdot,\tau)u$ uniformly on $[\tau,b]\subset J_\tau$ and $\proj_{[\tau,b]}y_n\to y_\infty:=\Psi_l(b,\tau) x_\tau+\F_l(b,\tau)u$ in $L^2([\tau,b];U)$, so that $(u,x_\infty,y_\infty)$ is a trajectory of $\Sigma_l$ on $[\tau,b]$ with $x_\infty(\tau)=x_\tau$. We next prove that $x_\infty=x$ and $y_\infty=\proj_{[\tau,b]}y$. 

As $\Pfrak(t)^{-1}\Afrak+\Gfrak_n(t)$ generates $\Tfrak_{n}$ and $(x(\tau),u)\in V(\tau)$, \eqref{eq:LPevol}, \eqref{eq:LPlGen} and \eqref{eq:LPsgGen} give
\begin{equation}\label{eq:trajnLHS}
\begin{aligned}
	\dot x_n(t)&=P(t)_{-1}^{-1}\big(A_{-1}x_n(t)+Bu(t)\big)+G_n(t)x_n(t),\\
	y_n(t)&=\bbm{\delta_0&0&0}\Tfrak_{n}(t,\tau)\bbm{\widetilde y\\x_\tau\\\shift_\tau u}\\
	&=\overline Cx_n(t)+Du(t),
\end{aligned}
\end{equation}
for some $(\widetilde y,x_\tau,\shift_\tau u)\in\dom\Afrak$ and $\proj_{[\tau,t]}(\widetilde u-u)=0$.

Then $w_n(t):=x_n(t)-x(t)\in C^1(J_\tau;X)$ solves the inhomogeneous Cauchy problem
$$
	\dot w_n(t)=A_{-1,n}(t)\, w_n(t)+v_n(t),\quad t\in J_\tau,\quad w_n(\tau)=0,
$$
with $v_n(t):=\big(G_n(t)-G(t)\big)x(t)$ in $C(J_\tau;X)$. Then, in fact $A_{-1,n}(t)\, w_n(t)=\dot w_n(t)-v_n(t)\in C(J_\tau;X)$, so that 
$w_n\in C(J_\tau;X_1)$ and $A_{-1,n}(t)\, w_n(t)=A_{n}(t)\, w_n(t)$, $t\in J_\tau$. By \cite[Thms 5.4.8 and 5.4.2]{PazyBook},
$$
	w_n(t)=\int_\tau^t\T_n(t,s)\,v_n(s)\ud s,
$$
and the local exponential boundedness of $\T_n$, independent of $n$, implies that 
$$
\begin{aligned}
	\|w_n(t)\|&\leq \int_\tau^t Me^{\omega (t-s)}\|v_n(s)\|\ud s\\
	&\leq (b-\tau)Me^{\omega (b-\tau)}\|\proj_{[\tau,b]}v_n\|_\infty
		\to0,
\end{aligned}
$$
for $t\in[\tau,b]$, since w.l.o.g. $\omega\geq0$, and $\|\proj_{[\tau,b]}v_n\|_\infty\to0$ for all $[\tau,b]\subset J_\tau$ by \eqref{eq:3.21}. This gives $x_\infty=x$.

Since $\sbm{A_{-1}&B\\\overline C&D}$ is passive, in analogy to the proof of \eqref{eq:SigmalPower},
$$
\begin{aligned}
	&\|y_n(t)-y(t)\|^2 =\|Cw_n(t)\|^2\leq -2\re\Ipdp{Aw_n(t)}{w_n(t)}\\
	&\quad = -2\re\Ipdp{\dot w_n(t)-G_n(t)x_n(t)+G(t)x(t)}{P(t)w_n(t)} \\
	&\quad =-\ddt\Ipdp{P(t)w_n(t)}{w_n(t)}
		+\Ipdp{\dot P(t)w_n(t)}{w_n(t)} \\
	&\qquad+2\re\Ipdp{G_n(t)x_n(t)-G(t)x(t)}{P(t)w_n(t)}.
\end{aligned}
$$
Integrating from $\tau$ to $b$, we get
$$
	\|\proj_{[\tau,b]}(y_n-y)\|_{L^2(J_\tau;Y)}^2\leq
	(b-\tau)K_n\cdot \| \proj_{[\tau,t]}w_n \|_\infty,
$$
$$
\begin{aligned}
	K_n&:=\max_{t\in[\tau,b]}\|\dot P(t)\|\cdot\|w_n(t)\|\\
	&\quad +2\max_{t\in[\tau,b]}\|P(t)\|\cdot\|G_n(t)x_n(t)-G(t)x(t)\|,
\end{aligned}
$$
which has a bound independent of $n$, since $x_n$ tends uniformly to $x$, and then $y_\infty=\lim_{n\to\infty}\proj_{[\tau,b]}y_n=\proj_{[\tau,b]}y$. Since $b\in J_\tau$ was arbitrary, we have proved item 1).

\emph{Step 4: $\T_l$ has the properties in Thm \ref{thm:extensionL} and possibly those in Thm \ref{thm:generationL}.} When $G$ is strongly continuously differentiable, $A_l(t)$ generates an evolution semigroup $\widetilde \T_l$ with time interval $J$, by Thm \ref{thm:generationL}. In order to prove that $\widetilde \T_l=\T_l$, fix $\tau\in J$, $\tau<\sup J$, and $x_\tau\in\dom A$ arbitrarily. Then $\widetilde x(t):=\widetilde \T_l(t,\tau)x_\tau$, $t\in J_\tau$, solves \eqref{eq:CauchyL}. By defining $\widetilde y(t):=\overline C\,\widetilde x(t)$, $t\in J_\tau$, we get that $(0,\widetilde x,\widetilde y)$ is a classical trajectory of \eqref{eq:AlSys} on $J_\tau$ with $\widetilde x(\tau)=x_\tau$. Since $(x_\tau,0)\in V(\tau)$, there is only one such trajectory by step 2, and $\widetilde x(t)=\T_l(t,\tau)x_\tau$ for all $t\in J_\tau$. This proves that $\widetilde\T_l(t,\tau)$ and $\T_l(t,\tau)$ coincide on the dense subspace $\dom A$ of $X$, and by the boundedness of these operators, $\widetilde\T_l(t,\tau)=\T_l(t,\tau)$ on all of $X$, for all $(t, \tau)\in \Delta_J$.

Now, even if $G$ is only strongly continuous, $\Pfrak(\cdot)^{-1}\Afrak$ generates the Lax-Phillips evolution family of some well-posed system $\sbm{\U&\star\\\star&\star}$ by step 2, and by what we just proved, $\U$ is generated by $P(\cdot)^{-1}A$ and it has all the properties asserted in \ref{thm:generationL}. By Thm \ref{thm:extensionL}, the Lax-Phillips evolution family $\Tfrak_l$ constructed in step 2 satisfies the equation
$$
\begin{aligned}
	&\bbm{0&I&0}\Tfrak_l(t,\tau)\bbm{0\\z\\0} =
	\bbm{0&I&0}\Ufrak(t,\tau)\bbm{0\\z\\0}\\
	&\qquad+\int_\tau^t \bbm{0&I&0}\Ufrak(t,s)\Gfrak(s)\Tfrak_l(s,\tau)
	\bbm{0\\z\\0}\ud s,
\end{aligned}
$$
and by \eqref{eq:LPevol} and the definition of $\Gfrak$, this equals \eqref{eq:TlIntegEq}. By uniqueness in \eqref{eq:TlIntegEq}, Since $\T_l$ inherits local exponential boundedness from $\Tfrak_l$, $\T_l$ is equal to the evolution family in Thm \ref{thm:extensionL}; hence it has all the asserted properties.

\emph{Step 5: Proving 2).} Define $x(t)$ as in 2), but with $(x_\tau,u)\in V(\tau)$ such that $\supp u\subset J_\tau$ w.l.o.g, and \emph{define} $x_n$ by \eqref{eq:trajn}. Then the calculations in step 3 give that $x(t)=\lim_{n\to\infty} x_n(t)$ in $X$, uniformly on $[\tau,b]\subset J$. By \eqref{eq:trajnLHS}, $x_n(t)-x_\tau$ equals
$$
	\int_\tau^t P(s)_{-1}^{-1}\big(A_{-1}x_n(s)+Bu(s)\big)+G_n(s)x_n(s)\ud s
$$ 
in $X_{-1}$, and letting $n\to\infty$, \eqref{eq:3.21} gives that $G_nx_n\to Gx$ uniformly on $[\tau,t]$ too, so that
\begin{equation}\label{eq:CauchIntl}
	x(t)-x_\tau=\int_\tau^tA_{-1,l}(s)\,x(s)+Bu(s)\ud s.
\end{equation}

For arbitrary $x_\tau\in X$ and $u\in L^2(J_\tau;U)$, $x\in C(J_\tau;X)$ which implies that $[\tau,b]\ni t\mapsto A_{-1,l}(t)\,x(t)+Bu(t)$ is in $L^2([\tau,b];X_{-1})$ for all compact $[\tau,b]\subset J$. Approximating $(x_\tau,u)$ by elements in $V(\tau)$ and using that convergence in $L^2$ implies convergence in $L^1$ on compact intervals in \eqref{eq:CauchIntl}, together with local uniform convergence of state trajectories due to the local uniform boundedness of $\Tfrak_l$ and $\Phi_l$, we obtain that \eqref{eq:CauchIntl} still holds. Hence, $x\in C(J_\tau;X)\subset L^2_{loc}(J_\tau;X_{-1})$ is a primitive of the $L^2_{loc}(J_\tau;X_{-1})$ function $\dot x(t)= A_{-1,l}(t)\,x(t)+Bu(t)$.

\emph{Step 6: Proving 3).} Let $(x_\tau,u)\in V(\tau)$ be arbitrary. By step 2, the functions $x_n\in C(J_\tau;X)$ and $y_n\in L^2_{loc}(J_\tau;Y)$ determined by \eqref{eq:trajn} 
satisfy \eqref{eq:SigmalEnergy} with $G_n$ instead of $G$. Letting $n\to\infty$, we get from step 4 that $(u,x,y)$ determined by \eqref{eq:SigmalTraj} satisfy \eqref{eq:SigmalEnergy}, due to the uniform convergence of $x_n$ to $x$ and of $G_nx_n$ to $Gx$ on $[\tau,t]$. Approximating as in step 4, we  extend \eqref{eq:SigmalEnergy} to arbitrary $(x_\tau,u)\in X\times L^2(J_\tau;U)$.
\end{proof}

Under stronger regularity assumptions on $P(\cdot)$ and $G(\cdot)$, we have the following representation formulas:

\begin{proposition}\label{prop:repL}
Assume that $P(\cdot)z\in C^2(J;X)$ and that $G(\cdot)z,\,G(\cdot)^*z\in C^1(J;X)$ for all $z\in X$. Let $\tau\in J$ with $\tau<\sup J$ and $t\in J_\tau$. 

The operators $\T_l(t,\tau)$ have unique extensions to $\T_{-1,l}(t,\tau)\in\Lscr(X_{-1,r}^\tau;X_{-1,r}^t)$ which are locally uniformly bounded. For all $x_0\in X$, $\T_l(t,\cdot)x_0$ is in $C^1((\inf J,t];X_{-1})$, 
$$
	\frac{\partial}{\partial\tau}\T_l(t,\tau)x_0=-\T_{-1,l}(t,\tau)A_{-1,l}(\tau)x_0.
$$

For all $x_\tau\in \dom A$,
$$
	\Psi_l(t,\tau)x_\tau=s\mapsto C\,\T_l(s,\tau)x_\tau,
		\quad \tau\leq s\leq t,
$$
and for all $(0,u)\in V(\tau)$,
$$
\begin{aligned}
	\Phi_l(t,\tau)u &= \int_\tau^t \T_{-1,l}(t,s)P(s)^{-1}_{-1}Bu(s)\ud s \\
	\big(\F_l(t,\tau)u\big)(s) &= \overline C\int_\tau^s \T_{-1,l}(s,\sigma)P(\sigma)^{-1}_{-1}Bu(\sigma)\ud \sigma \\
	&\qquad + Du(s),\qquad s\in[\tau,t],
\end{aligned}
$$ 
where the integrals are computed in $X_{-1}$. The representation formula for $\Phi_l$ in fact holds for all $u\in L_{loc}^2(J;U)$.
\end{proposition}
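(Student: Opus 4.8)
The plan is to first establish the two evolution-family assertions — the $X_{-1}$-extension $\T_{-1,l}$ and the formula for $\partial_\tau\T_l$ — and then to read the three representation formulas off from Thm~\ref{thm:Sigmal} by means of the variation-of-constants identity that these assertions make available; the whole argument parallels \cite[\S4]{SchWe10}. Under the present hypotheses $G(\cdot)z\in C^1(J;X)$, so by Thm~\ref{thm:Sigmal} the family $\T_l$ already has all the properties of Thm~\ref{thm:generationL}, and in particular \eqref{eq:TlDtau} holds on $\dom A$.

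\emph{Extension to $X_{-1}$ and the $\tau$-derivative.} First I would introduce the backward evolution family $\mathbb S_l$ generated, in the backward sense of Def.~\ref{def:evolfam}, by
$$
\begin{aligned}
	A_l(t)^*&=A^*P(t)^{-1}+G(t)^*,\\
	\dom{A_l(t)^*}&=P(t)\dom{A^*},\qquad t\in J.
\end{aligned}
$$
Since $A^*$ is maximal dissipative, since $P(\cdot)^{-1}$ is a strongly $C^2$ family of self-adjoint positive operators with bounded inverses (strong $C^2$-ness following from $P(\cdot)z\in C^2(J;X)$ by differentiating \eqref{eq:PinvDiff}), and since $G(\cdot)^*z\in C^1(J;X)$, this family is of the type treated by Prop~\ref{prop:GenRsmooth} with $A,P,G$ replaced by $A^*,P^{-1},G^*$; its backward analogue then supplies $\mathbb S_l$, together with the domain invariance $\mathbb S_l(t,\tau)\dom{A_l(t)^*}\subset\dom{A_l(\tau)^*}$ and with locally uniform exponential bounds. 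A standard duality computation (constancy of $s\mapsto\Ipdp{\T_l(s,\tau)x_0}{\mathbb S_l(t,s)z}$ for $x_0\in\dom A$ and $z\in\dom{A^*}$) identifies $\mathbb S_l(t,\tau)=\T_l(t,\tau)^*$. Dualizing the domain-invariance property with pivot $X$ — the $X$-pivot dual of $\dom{A_l(\tau)^*}$ is the extrapolation space of $A_l(\tau)$, i.e.\ $X_{-1,l}^\tau$, which is identified with $X_{-1}$ and, up to locally uniformly equivalent norms, with $X_{-1,r}^\tau$, cf.\ \cite[Prop.~4.2]{SchWe10} and the discussion before Thm~\ref{thm:generationR} — yields the (necessarily unique, $X$ being dense) locally uniformly bounded extensions $\T_{-1,l}(t,\tau)\in\Lscr(X_{-1,r}^\tau;X_{-1,r}^t)$; these retain the evolution property and are strongly continuous on $X_{-1}$ by density and local uniform boundedness. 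For the $\tau$-derivative I would rewrite \eqref{eq:TlDtau} (valid for $x_0\in\dom A$) as $\partial_\tau\T_l(t,\tau)x_0=-\T_{-1,l}(t,\tau)A_{-1,l}(\tau)x_0$, using $A_l(\tau)=A_{-1,l}(\tau)$ on $\dom A$, and then approximate an arbitrary $x_0\in X$ by elements of $\dom A$: the left-hand sides converge in $X$, hence in $X_{-1}$, while the right-hand sides converge in $X_{-1}$ uniformly for $\tau$ in compact subsets of $(\inf J,t]$, since $A_{-1,l}(\cdot)\in\Lscr(X;X_{-1})$ and $\T_{-1,l}$ are locally uniformly bounded (Thm~\ref{thm:extensionL}); hence $\tau\mapsto\T_l(t,\tau)x_0$ is $C^1$ into $X_{-1}$ with the asserted derivative.

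\emph{The representation formulas.} For $\Psi_l$, I would fix $x_\tau\in\dom A$; then $x(s):=\T_l(s,\tau)x_\tau\in\dom A$ for $s\in J_\tau$ by Thm~\ref{thm:generationL}, $\overline C$ restricts to $C$ on $\dom A$, and $P(s)\dot x(s)=(A_{-1}+P(s)G(s))x(s)$ there, so $(0,x,Cx)$ is a classical trajectory of \eqref{eq:AlSys}; since $(x_\tau,0)\in V(\tau)$, Thm~\ref{thm:Sigmal}(1) together with \eqref{eq:SigmalTraj} (taken with $u=0$) gives $\Psi_l(t,\tau)x_\tau=\proj_{[\tau,t]}\bigl(C\T_l(\cdot,\tau)x_\tau\bigr)$. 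For $\Phi_l$, let $(0,u)\in V(\tau)$ and put $x(t):=\Phi_l(t,\tau)u$; by Thm~\ref{thm:Sigmal}(1)--(2), $(u,x,y)$ with zero initial state is a trajectory of $\Sigma_l$, $x\in C(J_\tau;Z)\cap H_{loc}^1(J_\tau;X_{-1})$, $x(\tau)=0$, and, writing \eqref{eq:Sigmal} in $X_{-1}$, $\dot x(s)=A_{-1,l}(s)x(s)+P(s)^{-1}_{-1}Bu(s)$ for a.e.\ $s$. Using the $\tau$-derivative formula just proved, the strong continuity and local uniform boundedness of $\T_{-1,l}$, and the fact that $x(s)\in X$, one checks as in \cite[\S4]{SchWe10} that $s\mapsto\T_{-1,l}(t,s)x(s)$ is absolutely continuous into $X_{-1}$ with a.e.\ derivative $\T_{-1,l}(t,s)\bigl(\dot x(s)-A_{-1,l}(s)x(s)\bigr)=\T_{-1,l}(t,s)P(s)^{-1}_{-1}Bu(s)$; integrating from $\tau$ to $t$ and using $\T_{-1,l}(t,t)=I$, $x(\tau)=0$, gives the formula for $\Phi_l(t,\tau)u$. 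Since $\Sigma_l$ is a (compatible) well-posed system and $x(\cdot)=\Phi_l(\cdot,\tau)u$ takes values in the solution space $Z$, its output is $y=\overline C x+Du$, and $\proj_{[\tau,t]}y=\F_l(t,\tau)u$ by \eqref{eq:SigmalTraj}; substituting the just-found formula for $x(s)=\Phi_l(s,\tau)u$ yields the formula for $\F_l$. Finally, $u\mapsto\int_\tau^t\T_{-1,l}(t,s)P(s)^{-1}_{-1}Bu(s)\ud s$ is bounded from $L^2([\tau,t];U)$ into $X_{-1}$ by the Cauchy--Schwarz inequality and local uniform boundedness of $s\mapsto\T_{-1,l}(t,s)P(s)^{-1}_{-1}B$, and it agrees with $\Phi_l(t,\tau)$ on the dense set of $u\in H^1(J;U)$ with $u(\tau)=0$ (for which $(0,u)\in V(\tau)$); so, by causality of $\Phi_l$, the $\Phi_l$-formula extends to all $u\in L_{loc}^2(J;U)$.

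I expect the main obstacle to be the first part: making the duality argument rigorous calls for careful bookkeeping of the several time-varying extrapolation norms of \S\ref{sec:generation}, and, above all, for the existence of a backward evolution family generated by $A^*P(\cdot)^{-1}+G(\cdot)^*$ enjoying the domain-invariance and continuity supplied by the backward analogue of Prop~\ref{prop:GenRsmooth} — which is exactly where the hypotheses $P(\cdot)z\in C^2(J;X)$ and $G(\cdot)^*z\in C^1(J;X)$ are used. Once this is secured, the representation formulas follow routinely from Thm~\ref{thm:Sigmal} and the compatibility structure of $\sbm{A&B\\\overline C&D}$, as sketched.
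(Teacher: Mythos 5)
Your proposal is correct and follows essentially the same route as the paper: the $X_{-1}$-extension via the backward evolution family generated by $A^*P(\cdot)^{-1}+G(\cdot)^*$ (the backward analogue of Prop.~\ref{prop:GenRsmooth}), the $\tau$-derivative by density from \eqref{eq:TlDtau}, $\Psi_l$ from classical trajectories via Thm~\ref{thm:Sigmal}, $\Phi_l$ by the variation-of-constants identity $\frac{\partial}{\partial s}\T_{-1,l}(t,s)\Phi_l(s,\tau)u=\T_{-1,l}(t,s)P(s)^{-1}_{-1}Bu(s)$, and the $L^2_{loc}$ extension by density and causality. The only step the paper spells out that you compress is the local uniform bound of $\T_{-1,l}$ in the extrapolation norms, which the paper obtains by applying the uniform boundedness principle to the strongly continuous family $(P(\tau)-A^*)P(\tau)^{-1}\T_l(t,\tau)^*(I-A^*P(t)^{-1})^{-1}$ rather than from the $X$-norm exponential bounds alone; also, in your duality computation the test vector should be taken in $\dom{A_l(t)^*}=P(t)\dom{A^*}$ rather than in $\dom{A^*}$.
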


\begin{proof}
The first assertion follows from the proof of \cite[Prop.\ 2.8(b)]{SchWe10} with some minor modifications: $P(\cdot)^{-1}z\in C^2(J;X)$ for all $z\in X$ and then the backward analogue of  Prop.\ \ref{prop:GenRsmooth} gives that $A_{-1,l}(t)^*=A^*P(t)^{-1}+G(t)^*$ with $\dom{A_{-1,l}(t)^*}=P(t)\,\dom{A^*}$ generates the backward evolution family $\T_l^*$ with time interval $J$, such that
$$
	\Delta_J\ni (t,\tau)\mapsto A^*P(\tau)^{-1}\,\T_l(t,\tau)^*\,\big(I-A^*P(t)^{-1}\big)^{-1}z
$$
is continuous in $X$, for all $z\in X$. Then
$$
	T(t,\tau):=\big(P(\tau)-A^*\big) P(\tau)^{-1}\T_l(t,\tau)^*\big(I-A^*P(t)^{-1}\big)^{-1}
$$
is strongly continuous on $\Delta_J$ and by the uniform boundedness principle, $\|T(t,\tau)\|\leq K_{[a,b]}$ on $\Delta_{[a,b]}$ for some $K_{[a,b]}$ depending on the compact $[a,b]\subset J$. Then
$$
\begin{aligned}
	\|\T_{-1,l}(t,\tau)\|_{\Lscr(X_{-1,l}^\tau;X_{-1,l}^t)} &=
		\| \T_{l}(t,\tau)^*\|_{\Lscr(X_{1,l}^{d,t};X_{1,l}^{d,\tau})} \\
	&\leq K_{[a,b]},\quad (t,\tau)\in \Delta_{[a,b]},
\end{aligned}
$$
with $\T_{-1,l}(t,\tau)$ the dual of $\T_{l}(t,\tau)^*$. The first assertion now follows from the density of $X$ in $X_{-1,l}^\tau$. Then the second assertion follows as in the proof of \cite[Prop.\ 4.2b)]{SchWe10}.

Let $x_\tau\in\dom A$; then $(x_\tau,0)\in V(\tau)$ and by Thm \ref{thm:Sigmal}, there is a unique classical trajectory $(0,x,y)$ of \eqref{eq:AlSys} on $J_\tau$ with $x(\tau)=x_\tau$. This trajectory moreover satisfies 
$$
	x(t)=\T_l(t,\tau)x_\tau\qquad\text{and}\qquad \proj_{[\tau,t]}y=\Psi_l(t,\tau)x_\tau,
$$
where the latter is in $C([\tau,t];Y)$. Then, for $s\in[\tau,t]$,
$$
	(\Psi_l(t,\tau)x_\tau)(s)=y(s)=\overline Cx(s)=\overline C\,\T_l(s,\tau)x_\tau.
$$
Recalling that $C=\overline C\big|_{\dom A}$ and that $\proj_{[\tau,t]}y=\Psi_l(t,\tau)x_\tau$ by Thm \ref{thm:Sigmal}, we get the representation formula for $\Psi_l$.

For the other assertions, by using Thm \ref{thm:Sigmal} in the proof of \cite[Prop.\ 4.2.b)]{SchWe10}, we get for $(0,u)\in V(\tau)$, in $X_{-1}$,
$$
	\frac{\partial}{\partial s} \T_{l}(t,s)\,\Phi_l(s,\tau)u = 
	\T_{-1,l}(t,s)P(s)^{-1}_{-1}Bu(s),
$$
and integration now gives the representation formula for $\Phi_l(t,\tau)u$ for $(0,u)\in V(\tau)$. By Thm \ref{thm:Sigmal}, $x:=\Phi_l(\cdot,\tau)u\in C(J_\tau;Z)$ is the state trajectory of \eqref{eq:AlSys} corresponding to $(0,u)\in V(\tau)$, and the output $y\in C(J_\tau;Y)$ of \eqref{eq:AlSys} is
$$
	\big(\F_l(t,\tau)u\big)(s)=y(s)=\overline C\,\Phi_l(s,\tau)u+Du(s),
$$
i.e., the representation formula for $\F_l(t,\tau)u$ is correct. 

By the boundedness of $\Phi_l(t,\tau)$, density, the uniform boundedness on $[\tau,t]$ of $\T_{-1,l}(t,s)P(s)^{-1}_{-1}B$ from $U$ into $X_{-1}$, and the fact that convergence in $L^2$ implies convergence in $L^1$ on $[\tau,t]$, the representation formula for $\Phi_l$ is correct even for all $u\in L^2(J;U)$. Observing that the integration in the representation formula happens over the compact interval $[\tau,t]$, we get the representation formula for all $u\in L_{loc}^2(J;U)$.
\end{proof}

\subsection{Multiplicative perturbation from the right}

The two preceding results have counterparts in the case of a multiplicative perturbation from the right. In the remainder of this section, we are concerned with the system
\begin{equation}\label{eq:ArSys}
	\bbm{\dot x(t)\\y(t)}=\bbm{A_{-1}P(t)+G(t)&B\\\overline CP(t)&D}\bbm{x(t)\\u(t)},
\end{equation}
where again $\sbm{A_{-1}&B\\\overline C&D}$ is passive. The Lax-Phillips evolution family $\Tfrak_r$ of this system will be associated to the generator family $\Afrak_r(t):=\Afrak\Pfrak(t)+\Gfrak(t)$, see \eqref{eq:LPsgGen}, which equals
$$
\begin{aligned}
	&\Afrak_r(t)\bbm{y\\x_0\\u} =\bbm{y'\\\big(A_{-1}P(t)+G(t)\big)x_0+Bu(0)\\u'},  \\
	&\qquad\text{with}\quad \dom{\Afrak_r(t)}=\Pfrak(t)^{-1}\,\dom\Afrak,\quad t\in J.
\end{aligned}
$$
We have the following weaker analogue of Thm \ref{thm:Sigmal}:

\begin{theorem}\label{thm:Sigmar}
Let $\tau\in J$ be such that $\tau<\sup J$. There is a well-posed system $\Sigma_r=\sbm{\T_r&\Phi_r\\\Psi_r&\F_r}$ with time interval $J$ and Lax-Phillips evolution family $\Tfrak_r$, such that $\T_r$ has the properties asserted in Thm \ref{thm:generationR}, and moreover:
\begin{enumerate}
\item For $x_\tau\in X$ and $u\in L^2(J_\tau;X)$, the function
\begin{equation}\label{eq:GenSigmarState}
	x(t):=\T_r(t,\tau)x_\tau+\Phi_r(t,\tau)u,\quad t\in J_\tau,
\end{equation}
is in $H^1_{loc}(J_\tau;X_{-1})$ and it satisfies 
$$
	\dot x(t)=A_{-1,r}(t)\,x(t)+Bu(t)
$$
in $X_{-1}$ for almost all $t\in J_\tau$, where $A_{-1,r}$ is the main operator in \eqref{eq:ArSys}.

\item Every classical trajectory $(u,x,y)$ of \eqref{eq:ArSys} with time interval $J_\tau$ satisfies $P(\cdot)\,x(\cdot)\in C(J_\tau;Z)$, with $Z$ given in \eqref{eq:solspace}.

\item[3)] Every classical trajectory of \eqref{eq:ArSys} with time interval $J_\tau$ is uniquely determined by $x(\tau)$ and $u$, and it satisfies the power inequality \eqref{eq:SigmalPower} and the energy inequality \eqref{eq:SigmalEnergy}, both with equality if $\Sigma_i$ preserves energy.
\end{enumerate}

If $P(\cdot)z\in C^2(J;X)$ and $G(\cdot)z\in C^1(J;X)$ for all $z\in X$, then $\T_r$ has the properties asserted in Prop.\ \ref{prop:GenRsmooth}, and further:
\begin{enumerate}
\item[4)] The Lax-Phillips evolution family $\Tfrak_r$ of $\Sigma_r$ is generated by $\Afrak_r(t)$, $t\in J$, in the sense of Def.\ \ref{def:evolfam}.

\item[5)] For every $x_\tau$ and $u$ with $\big(P(\tau)\,x(\tau),u\big)\in V(\tau)$, there is a (unique) classical trajectory $(u,x,y)$ of \eqref{eq:ArSys} on $J_\tau$ with $x(\tau)=x_\tau$. The output satisfies $y\in H_{loc}^1(J_\tau;Y)$, and $(u,x,y)$ is also a trajectory of $\Sigma_r$. 
\end{enumerate}
\end{theorem}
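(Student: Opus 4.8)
The plan is to mirror the proof of Thm~\ref{thm:Sigmal} step by step, reducing the right‑perturbed case to the generation theory of \S\ref{sec:generation} together with the change of variables of Cor~\ref{cor:V}, and doing all the Lax--Phillips bookkeeping at the level of $\Hscr$.

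\emph{Step 1 (classical trajectories).} Everything here is a direct computation, obtained from Step~1 of the proof of Thm~\ref{thm:Sigmal} by moving $P(t)$ into the state slot. If $(u,x,y)$ is a classical trajectory of \eqref{eq:ArSys}, then $A_{-1}P(t)x(t)+Bu(t)=\dot x(t)-G(t)x(t)\in X$, so \eqref{eq:passpow} applies to the pair $\bigl(P(t)x(t),u(t)\bigr)$ and gives $2\re\Ipdp{\dot x(t)-G(t)x(t)}{P(t)x(t)}\le\|u(t)\|^2-\|y(t)\|^2$, with equality in the energy‑preserving case. Using $P(t)=P(t)^*$ to write $\ddt\Ipdp{P(t)x(t)}{x(t)}=\Ipdp{\dot P(t)x(t)}{x(t)}+2\re\Ipdp{\dot x(t)}{P(t)x(t)}$ and substituting yields \eqref{eq:SigmalPower}, and integration from $\tau$ to $t$ yields \eqref{eq:SigmalEnergy}; Gr\"onwall's inequality applied to $\phi(t):=\Ipdp{P(t)x(t)}{x(t)}+\int_\tau^t\|y(s)\|^2\ud s$ (the constant now built from $\|P(s)^{-1/2}\dot P(s)P(s)^{-1/2}\|$ and $\|P(s)^{1/2}G(s)P(s)^{-1/2}\|$) gives item~3. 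Item~2 is immediate from $A_{-1}P(t)x(t)+Bu(t)=\dot x(t)-G(t)x(t)\in X$: solving for $P(t)x(t)$ via $(I-A_{-1})^{-1}$ places it in $C(J_\tau;\dom A)$ plus the image of $C(J_\tau;U)$ under $(I-A_{-1})^{-1}B$, both inside $C(J_\tau;Z)$; cf.\ the discussion around \eqref{eq:solspace}.

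\emph{Step 2 (constructing $\Sigma_r$).} Apply the results of \S\ref{sec:generation} with $(A,P,G)$ replaced by $(\Afrak,\Pfrak,\Gfrak)$ on $\Hscr$; this is legitimate since $\Afrak$ from \eqref{eq:LPsgGen} is maximal dissipative (passivity makes its Lax--Phillips semigroup a contraction semigroup) and $\Pfrak(\cdot),\Gfrak(\cdot)$ inherit \eqref{eq:standing}. The operator $\Pfrak(t)\Afrak=\bigl(\Pfrak(t)^{-1}\bigr)^{-1}\Afrak$ is a \emph{left} multiplicative perturbation of $\Afrak$ of the type in Thm~\ref{thm:Sigmal}, with multiplier $\Pfrak(\cdot)^{-1}$ (satisfying \eqref{eq:standing}, its derivative given by \eqref{eq:PinvDiff}) and additive part $\Pfrak(t)\Gfrak_n(t)\Pfrak(t)^{-1}=\sbm{0&0&0\\0&P(t)G_n(t)P(t)^{-1}&0\\0&0&0}$, which is strongly $C^1$ since $P(\cdot),P(\cdot)^{-1},G_n(\cdot)$ are; so Thm~\ref{thm:Sigmal} makes each such operator the generator of the Lax--Phillips evolution family of a well‑posed system. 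Letting $n\to\infty$ as in Cor~\ref{cor:V} and Thm~\ref{thm:extensionL}, the limit $\widetilde\Tfrak_l$ is again an evolution family on $\Hscr$, and since the block form \eqref{eq:LPevol} and the causality \eqref{eq:WPcausal} pass to strong limits that are uniform on compact $\Delta_{[a,b]}$, it is the Lax--Phillips evolution family of a well‑posed system whose state block is $\widetilde\T_l$ of \eqref{eq:PTtlP}. Then $\Vfrak(t,\tau):=\Pfrak(t)^{-1}\widetilde\Tfrak_l(t,\tau)\Pfrak(\tau)$ is again an evolution family (Cor~\ref{cor:V}), and conjugation by the block‑diagonal $\Pfrak(t)$, which acts as the identity on the $L^2(\rminus;Y)$ and $L^2(\rplus;U)$ components, preserves both \eqref{eq:LPevol} and \eqref{eq:WPcausal}, so $\Vfrak$ is the Lax--Phillips evolution family of a well‑posed system with state block $\V$ from \eqref{eq:Vdef}. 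Finally, Thm~\ref{thm:generationR} applied on $\Hscr$ produces the unique evolution family $\Tfrak_r$ solving \eqref{eq:TrInt2}, whose perturbation kernel $\Pfrak(s)^{-1}\dot\Pfrak(s)=\sbm{0&0&0\\0&P(s)^{-1}\dot P(s)&0\\0&0&0}$ is bounded and supported in the state component; a successive‑approximation (Volterra/Dyson) expansion of \eqref{eq:TrInt2} then shows that $\Tfrak_r$ is block upper‑triangular with $\shift_{t-\tau}^-$, $\shift_{t-\tau}^+$ in its corner diagonal blocks and is causal, so by Thm~\ref{thm:LPsys} there is a well‑posed system $\Sigma_r=\sbm{\T_r&\Phi_r\\\Psi_r&\F_r}$ with Lax--Phillips evolution family $\Tfrak_r$, and reading off the state block of \eqref{eq:TrInt2} shows $\T_r$ solves the scalar equation \eqref{eq:TrInt2}, hence equals the evolution family of Thm~\ref{thm:generationR} and has all its properties.

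\emph{Step 3 (the remaining items).} Item~1 follows as in Step~5 of the proof of Thm~\ref{thm:Sigmal}: by Thm~\ref{thm:generationR}, $\Tfrak_r(\cdot,\tau)W$ solves $\dot W=\bigl(\Afrak_{-1}\Pfrak(t)+\Gfrak(t)\bigr)W$ in the extrapolation space of $\Afrak$; for $\bigl(P(\tau)x(\tau),u\bigr)\in V(\tau)$ one feeds in $W(\tau)=(\widetilde y,x_\tau,\shift_\tau u)$ with $\widetilde y$ and an $H^1$‑extension of $u$ chosen as in \cite[Thm~4.1]{SchWe10}, and reading off the middle component (the $X$‑component of $\Afrak_{-1}$ on $(y,x_0,u)$ being $A_{-1}x_0+Bu(0)$) gives $\dot x(t)=A_{-1,r}(t)x(t)+Bu(t)$ in $X_{-1}$; the general case $x_\tau\in X$, $u\in L^2(J_\tau;U)$ follows by density and the local uniform boundedness of $\Tfrak_r$ and $\Phi_r$, whence $x\in H^1_{loc}(J_\tau;X_{-1})$. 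For items~4 and~5, assuming $P(\cdot)z\in C^2(J;X)$ and $G(\cdot)z\in C^1(J;X)$ for all $z\in X$, Prop~\ref{prop:GenRsmooth} applied on $\Hscr$ shows that $\Afrak_r(t)=\Afrak\Pfrak(t)+\Gfrak(t)$ itself generates an evolution family in the sense of Def.~\ref{def:evolfam}; also solving \eqref{eq:TrInt2}, this must be $\Tfrak_r$ (item~4), and its state block then has the properties of Prop~\ref{prop:GenRsmooth}. Finally, for item~5, given $\bigl(P(\tau)x(\tau),u\bigr)\in V(\tau)$, interpolating the Cauchy problem of $\Afrak_r(\cdot)$ exactly as in \eqref{eq:LPlInterp} yields a classical trajectory $(u,x,y)$ of \eqref{eq:ArSys} on $J_\tau$ with $x(\tau)=x_\tau$ and $y\in H^1_{loc}(J_\tau;Y)$, which is then also a trajectory of $\Sigma_r$, unique by Step~1.

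The main obstacle is the middle part of Step~2. Unlike the left‑perturbed case, where Thm~\ref{thm:extensionL} builds $\Tfrak_l$ directly as a limit of Lax--Phillips evolution families $\Tfrak_n$, here Thm~\ref{thm:generationR} delivers $\T_r$ only indirectly, through $\V$ and the Volterra equation \eqref{eq:TrInt2}, so one must re‑establish from scratch that the resulting $\Tfrak_r$ has the Lax--Phillips block structure with causal entries. The point that makes this go through with no regularity on $P$ beyond \eqref{eq:standing} is that after the change of variables of Cor~\ref{cor:V} the only occurrence of $\dot P$ is as the bounded, state‑diagonal kernel $\sbm{0&0&0\\0&P(s)^{-1}\dot P(s)&0\\0&0&0}$, so the iterated Volterra expansion can neither couple the outer $L^2$‑components nor spoil causality; removing the $C^1$‑hypothesis on $G$ in passing to the limit $n\to\infty$ is exactly the content of Thm~\ref{thm:extensionL}.
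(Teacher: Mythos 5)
Your proposal is correct and follows essentially the same route as the paper: establish the power/energy inequality and uniqueness for classical trajectories directly from passivity and Gr\"onwall, build $\Tfrak_r$ on $\Hscr$ by applying the left-perturbation machinery to $\Pfrak(t)\Afrak+\Pfrak(t)\Gfrak_n(t)\Pfrak(t)^{-1}$, pass to the limit, solve the Volterra equation of Thm~\ref{thm:generationR} at the Lax--Phillips level, and identify the state block with the $\T_r$ of Thm~\ref{thm:generationR}, invoking Prop.~\ref{prop:GenRsmooth} for items 4)--5) in the smooth case. The only deviations are cosmetic: you justify the Lax--Phillips block structure and causality of $\Tfrak_r$ by a Volterra/Dyson expansion where the paper cites \cite[Prop.~4.3.a)]{SchWe10}, and you obtain item 1) by reading off the state component of the $\Hscr_{-1}$-valued Cauchy problem plus density, where the paper instead differentiates $\Phi_r(t,\tau)u$ via \cite[(4.17)]{SchWe10}.
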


\begin{proof}
The proof is similar to that of Thm \ref{thm:Sigmal}. First establish 3). Next, temporarily assume that $P(\cdot)z\in C^2(J;X)$ and $G(\cdot)z\in C^1(J;X)$ for all $z\in X$. Then use Prop.\ \ref{prop:GenRsmooth} instead of Thm \ref{thm:generationL} and $\big(P(\tau)\,x(\tau),u\big)\in V(\tau)$ instead of $\big(x(\tau),u\big)\in V(\tau)$, in step one of the proof of Thm \ref{thm:Sigmal}, to get items 4) and 5), apart from the claim that $(u,x,y)$ is also a trajectory of $\Sigma_r$. Item 2) is proved like the corresponding statement in Thm \ref{thm:Sigmal}.

The classical trajectory $(u,x,y)$ of \eqref{eq:ArSys} that was constructed in the previous paragraph is also a trajectory of $\Sigma_r$. Namely, by \eqref{eq:LPlInterp} for $\Tfrak_r$ and \eqref{eq:LPevol}, we get that
$$
\left\{\begin{aligned}
	y(t) &= 	\delta_t\Psi_r(t,\tau)x_\tau+\delta_t\F_r(t,\tau)\widetilde u , \\
	x(t) &= 	\T_r(t,\tau)x_\tau+\Phi_r(t,\tau)\widetilde u, \qquad t\in J_\tau;
\end{aligned}\right.
$$
then \eqref{eq:WPcausal}, \eqref{eq:WPlin} and the choice of $\widetilde u$ give
$$
\left\{\begin{aligned}
	x(t) &= 	\T_r(t,\tau)x_\tau+\Phi_r(t,\tau) u, \\
	\proj_{[\tau,t]}y &= \Psi_r(t,\tau)x_\tau+\F_r(t,\tau) u , \qquad t\in J_\tau.
\end{aligned}\right.
$$

Even without the additional smoothness assumptions, by Thm \ref{thm:Sigmal}, $\widetilde\Afrak_n(t):=\Pfrak(t)\Afrak+\Pfrak(t)\Gfrak_n(t)\Pfrak(t)^{-1}$, with domain $\dom\Afrak$, generates the Lax-Phillips evolution family $\widetilde\Tfrak_n$ of a well-posed system. Letting $n\to\infty$, we get from Thm \ref{thm:generationR} that there exists a unique evolution family $\Tfrak_r$ which satisfies
\begin{equation}\label{eq:TrLP}
\begin{aligned}
	&\Tfrak_r(t,\tau)w_0=
	\Pfrak(t)^{-1}\widetilde \Tfrak_l(t,\tau)\Pfrak(\tau)w_0 \\
	&\qquad+\int_\tau^t 
		\Pfrak(t)^{-1}\widetilde \Tfrak_l(t,s)\dot\Pfrak(s)
		\Tfrak_r(s,\tau) w_0\ud s
\end{aligned}
\end{equation}
for all $w_0\in \Hscr$, where $\widetilde\Tfrak_l$ is the strong limit of $\widetilde\Tfrak_n$. Since $\widetilde\Tfrak_n$ is associated to a well-posed system, so are $\widetilde\Tfrak_l$ and $\Tfrak_r$; see the proof of Thm \ref{thm:Sigmal}, step 3, and the proof of \cite[Prop.\ 4.3.a)]{SchWe10}.

Thm \ref{thm:generationR} and \cite[(4.17)]{SchWe10} give, for $u\in L^2(J;U)$,
$$
\begin{aligned}
&\frac{\partial}{\partial t}\Phi_r(t,\tau)u = 
	P(t)^{-1}\frac{\partial}{\partial t}\widetilde\Phi_l(t,\tau)u \\
	&\quad +\int_\tau^t A_{-1,r}(t)\T_r(t,s)P(s)^{-1}\dot P(s)P(s)^{-1} \widetilde\Phi_l(s,\tau)u\ud s,
\end{aligned}
$$
and using Thm \ref{thm:Sigmal} with \cite[(4.17)]{SchWe10}, we get that this equals $A_{-1,r}(t)\Phi_r(t,\tau)u+Bu(t)$ which is in $C(J_\tau;X_{-1})\subset L^2_{loc}(J_\tau;X_{-1})$ as a function of $t$. By Thm \ref{thm:generationR}, $\T_r(\cdot,\tau)x_\tau\in C^1(J_\tau;X_{-1})\subset H^1_{loc}(J_\tau;X_{-1})$ and $\frac{\partial}{\partial t}\T_r(t,\tau)x_\tau=A_{-1,r}(t)\T_r(t,\tau)x_\tau$. Hence, 1) holds.

The proof is complete once we have established that $\T_r$ equals the evolution family in Thm \ref{thm:generationR}; then $\T_r$ has the properties asserted in Thm \ref{thm:generationR}, and in the smooth case even those in Prop.\ \ref{prop:GenRsmooth}. For an arbitrary $x_\tau\in X$, we apply $\bbm{0&I&0}$ to \eqref{eq:TrLP} with $w_0:=\sbm{0\\I\\0}x_\tau$, from the left,
$$
\begin{aligned}
	&\T_r(t,\tau)x_\tau=
	P(t)^{-1}\widetilde \T_l(t,\tau)P(\tau)x_\tau \\
	&\qquad+\int_\tau^t 
		P(t)^{-1}\widetilde \T_l(t,s)\dot P(s)
		\T_r(s,\tau) x_\tau\ud s.
\end{aligned}	
$$
By \eqref{eq:PTtlP}, $\T_r$ satisfies \eqref{eq:TrInt}, and by uniqueness, $\T_r$ is the same evolution family as in Thm \ref{thm:generationR}.
\end{proof}

There are also representation formulas for multiplicative perturbation from the right, in case of smoother $P(\cdot)$ and $G(\cdot)$.

\begin{proposition}\label{prop:repR}
Assume that $P(\cdot)z\in C^2(J;X)$ and that $G(\cdot)z,\,G(\cdot)^*z\in C^1(J;X)$ for all $z\in X$. Let $\tau\in J$ with $\tau<\sup J$ and $t\in J_\tau$.

The operators $\T_r(t,\tau)$ have unique extensions to $\T_{-1,r}(t,\tau)\in\Lscr(X_{-1,r}^\tau;X_{-1,r}^t)$ which are locally uniformly bounded. The function $\tau\mapsto\T_r(t,\tau)x_0$, $\tau\in J$ with $\tau\leq t$, is continuously differentiable in $X_{-1}$, and
\begin{equation}\label{eq:dTrTau}
	\frac{\partial}{\partial\tau}\T_r(t,\tau)x_0=-\T_{-1,r}(t,\tau)A_{-1,r}(\tau)x_0.
\end{equation}

For all $x_\tau\in P(\tau)^{-1}\,\dom A$, 
$$
	\Psi_r(t,\tau)x_\tau=s\mapsto \overline CP(s)\,\T_r(s,\tau)x_\tau,\quad \tau\leq s\leq t,
$$
and for all $(0,u)\in V(\tau)$,
$$
\begin{aligned}
	\Phi_r(t,\tau)u &= \int_\tau^t \T_{-1,r}(t,s)Bu(s)\ud s \\
	\big(\F_r(t,\tau)u\big)(s) &= 
	\overline CP(s)\!\int_\tau^s \T_{-1,r}(s,\sigma)Bu(\sigma)\ud \sigma
		+Du(s),
\end{aligned}
$$ 
$\tau\leq s\leq t$, where the integrals are computed in $X_{-1}$. The representation formula for $\Phi_r$ in fact holds for all $u\in L_{loc}^2(J;U)$.
\end{proposition}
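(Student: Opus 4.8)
The plan is to mimic the proof of Prop.~\ref{prop:repL} step by step, with $\T_r$, $A_{-1,r}$, Thm~\ref{thm:Sigmar} and Prop.~\ref{prop:GenRsmooth} taking over the roles played there by $\T_l$, $A_{-1,l}$, Thm~\ref{thm:Sigmal} and Thm~\ref{thm:generationL}. Observe that the hypotheses here are precisely those of Prop.~\ref{prop:GenRsmooth} together with the extra assumption $G(\cdot)^*z\in C^1(J;X)$, which will be used only to control the adjoint in the first assertion.

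First I would prove the first assertion, namely the existence of the extension $\T_{-1,r}$ and the formula \eqref{eq:dTrTau}. Since $P(t)$ is bounded, self-adjoint and boundedly invertible and $A^*$ is maximal dissipative, the adjoint of $A_r(t)=AP(t)+G(t)$ with $\dom{A_r(t)}=P(t)^{-1}\dom A$ is $A_r(t)^*=P(t)A^*+G(t)^*$ with $\dom{A_r(t)^*}=\dom{A^*}$. Writing $\widetilde P(t):=P(t)^{-1}$ and $\widetilde G(t):=G(t)^*$, this has the \emph{left}-perturbation form $\widetilde P(t)^{-1}A^*+\widetilde G(t)$ of \eqref{eq:AlDef}, and the present hypotheses give $\widetilde P(\cdot)z\in C^2(J;X)$ (differentiate \eqref{eq:PinvDiff} once more) and $\widetilde G(\cdot)z\in C^1(J;X)$. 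Hence the backward analogue of Thm~\ref{thm:generationL} applies to $A_r(t)^*$; comparing with step~2 of the proof of Thm~\ref{thm:generationR}, where $\T_r$ was obtained as the adjoint of the strong limit of the backward evolution families generated by $P(t)A^*+G_n(t)^*$, and using uniqueness in the backward analogue of Thm~\ref{thm:extensionL}, this backward evolution family is exactly $\T_r(\cdot,\cdot)^*$. From here I would argue as in the first paragraph of the proof of Prop.~\ref{prop:repL}, which follows \cite[Prop.~2.8(b)]{SchWe10}: the backward, sandwiched refinement of the continuity statement of Thm~\ref{thm:generationL} shows that, for suitable $\mu$ (locally uniformly in the resolvent sets of the $A_r(t)^*$),
\[
	\Delta_J\ni(t,\tau)\mapsto\big(\mu-A_r(\tau)^*\big)\,\T_r(t,\tau)^*\,\big(\mu-A_r(t)^*\big)^{-1}z
\]
is continuous for every $z\in X$, so the uniform boundedness principle bounds it on each compact triangle $\Delta_{[a,b]}$; this says that $\T_r(t,\tau)^*$ is locally uniformly bounded from $\dom{A_r(t)^*}$ to $\dom{A_r(\tau)^*}$ with their graph norms. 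Dualizing, and using that $X_{-1,r}^t$ is the dual of $\dom{A_r(t)^*}$ with pivot $X$ and that $X$ is dense in $X_{-1,r}^\tau$, yields $\T_{-1,r}(t,\tau)\in\Lscr(X_{-1,r}^\tau;X_{-1,r}^t)$ with the same local bounds; \eqref{eq:dTrTau} then follows by differentiating the identity $\T_r(t,\tau)=\T_r(t,s)\T_r(s,\tau)$ in $\tau$ and invoking the backward Cauchy problem for $\T_r(\cdot,\cdot)^*$, exactly as in the proof of \cite[Prop.~4.2(b)]{SchWe10}.

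Granting this, the three representation formulas come out by the same bookkeeping as in Prop.~\ref{prop:repL}. For $x_\tau\in P(\tau)^{-1}\dom A$ one has $P(\tau)x_\tau\in\dom A$, hence $A_{-1}P(\tau)x_\tau\in X$ and $\big(P(\tau)x_\tau,0\big)\in V(\tau)$; Thm~\ref{thm:Sigmar}(5) then provides a unique classical trajectory $(0,x,y)$ of \eqref{eq:ArSys} on $J_\tau$ with $x(\tau)=x_\tau$, which is also a trajectory of $\Sigma_r$, so $x(s)=\T_r(s,\tau)x_\tau$ and $\proj_{[\tau,t]}y=\Psi_r(t,\tau)x_\tau$, while the output equation of \eqref{eq:ArSys} with $u=0$ reads $\big(\Psi_r(t,\tau)x_\tau\big)(s)=\overline CP(s)\,\T_r(s,\tau)x_\tau$ for $\tau\le s\le t$. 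For $(0,u)\in V(\tau)$ I would differentiate $s\mapsto\T_r(t,s)\Phi_r(s,\tau)u$ in $X_{-1}$: combining \eqref{eq:dTrTau} with the Cauchy equation $\tfrac{\partial}{\partial s}\Phi_r(s,\tau)u=A_{-1,r}(s)\Phi_r(s,\tau)u+Bu(s)$ from Thm~\ref{thm:Sigmar}(1), the two $A_{-1,r}$-terms cancel and leave $\tfrac{\partial}{\partial s}\T_r(t,s)\Phi_r(s,\tau)u=\T_{-1,r}(t,s)Bu(s)$; integrating from $\tau$ to $t$ and using $\Phi_r(\tau,\tau)=0$ gives the $\Phi_r$-formula. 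Since, by Thm~\ref{thm:Sigmar}, the classical trajectory of \eqref{eq:ArSys} attached to $(0,u)\in V(\tau)$ is $\big(u,\,\Phi_r(\cdot,\tau)u,\,\overline CP(\cdot)\Phi_r(\cdot,\tau)u+Du(\cdot)\big)$, with $P(\cdot)\Phi_r(\cdot,\tau)u\in C(J_\tau;Z)$ by Thm~\ref{thm:Sigmar}(2) so that $\overline C$ may be applied, the $\F_r$-formula is obtained by substituting the $\Phi_r$-representation. Finally, the $\Phi_r$-formula extends to all $u\in L_{loc}^2(J;U)$ by density of $\{u\mid(0,u)\in V(\tau)\}$ in $L^2([\tau,t];U)$, the boundedness of $\Phi_r(t,\tau)$, the local uniform boundedness on $[\tau,t]$ of $s\mapsto\T_{-1,r}(t,s)B\in\Lscr(U;X_{-1})$, the fact that $L^2$-convergence implies $L^1$-convergence on $[\tau,t]$, and causality of $\Phi_r$.

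The hard part is the first assertion: one must identify $A_r(t)^*$ correctly as a left-perturbed maximal dissipative generator, check that the smoothness hypotheses — in particular $G(\cdot)^*z\in C^1(J;X)$ — really let the backward smooth-generation machinery of \S\ref{sec:generation} apply to it, and then run the duality-plus-uniform-boundedness argument that produces $\T_{-1,r}$ with the stated local bounds. Once \eqref{eq:dTrTau} is available, the rest is routine and entirely parallel to the proof of Prop.~\ref{prop:repL}.
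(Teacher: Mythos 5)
Your proposal follows essentially the same route as the paper: identify $A_r(t)^*=P(t)A^*+G(t)^*$ with domain $\dom{A^*}$ as a left-type perturbation of $A^*$ (with $\widetilde P(t)=P(t)^{-1}$), apply the backward version of Thm~\ref{thm:generationL} to obtain the backward family $\T_r^*$ and the sandwiched continuity, dualize via the uniform boundedness principle to get $\T_{-1,r}$, and then read off the representation formulas from the classical trajectories supplied by Thm~\ref{thm:Sigmar}. The only point where the paper is more explicit is the verification that \eqref{eq:dTrTau} is a \emph{strong}, continuous derivative: it first computes the weak $\tau$-derivative via the backward Cauchy problem for $\T_r^*$ and then shows continuity of $\tau\mapsto\T_{-1,r}(t,\tau)A_{-1,r}(\tau)x_0$ in $X_{-1}$ by splitting off $A_{-1}P(\cdot)x_0$ (handled via the proof of \cite[Prop.~2.7.c)]{SchWe10}) from $G(\cdot)x_0$ (handled via strong continuity of $\T_r$ and local uniform boundedness of $\T_{-1,r}$) --- a detail your citation of \cite[Prop.~4.2(b)]{SchWe10} glosses over but does not invalidate.
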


\begin{proof}
The claims on the extensions of $\T_r(t,\tau)$ follow as in the proof of Prop.\ \ref{prop:repL}, because the backward version of Thm \ref{thm:generationL} gives that $A_{r}^\dagger(t)^*=P(t)A^*+G(t)^*$ with $\dom{A_{r}^\dagger(t)}=\dom{A^*}$ generates the backward evolution family $\T_r^*$ with time interval $J$. Then also, for all $x_0\in X$ and $z\in\dom{A^*}$,
$$
\begin{aligned}
	\frac{\partial}{\partial \tau} \Ipdp{\T_r(t,\tau)x_0}{z}_{X_{-1},X_{1}^{d}} = 
	 \Ipdp{x_0}{\frac{\partial}{\partial \tau}\T_r(t,\tau)^*z}_{X_{-1},X_{1}^{d}}&=\\
	\Ipdp{-\T_{-1,r}(t,\tau)\big(A_{-1}P(\tau)+G(\tau)\big)x_0}{z}_{X_{-1},X_{1}^{d}} &,
\end{aligned}
$$
and we next prove that the weak derivative is strong. We have
$$
\begin{aligned}
	&\|\T_{-1,r}(t,\tau)\big(A_{-1}P(\tau)+G(\tau)\big)x_0 \\
	&\quad -\T_{-1,r}(t,\sigma)\big(A_{-1}P(\sigma)+G(\sigma)\big)x_0 \|_{-1} \leq \\
	&\|\T_{-1,r}(t,\tau)A_{-1}P(\tau)x_0-\T_{-1,r}(t,\sigma)A_{-1}P(\sigma)x_0 \|_{-1} \\
	&\quad+\|\T_{-1,r}(t,\tau)G(\tau)x_0-\T_{-1,r}(t,\sigma)G(\sigma)x_0 \|_{-1},
\end{aligned}
$$
where the first term tends to zero as $\sigma\to\tau$ by the proof of \cite[Prop.\ 2.7.c)]{SchWe10}, and the second term is at most
$$
\begin{aligned}
	&\|\big(\T_{r}(t,\tau)-\T_{r}(t,\sigma)\big)G(\tau)x_0 \|_{-1}\\
	&\quad+\|\T_{-1,r}(t,\sigma)\|\cdot\|G(\tau)x_0-G(\sigma)x_0 \|_{-1}\to0
\end{aligned}
$$
by the strong continuity of $\T_r$ and the local uniform boundedness of $\T_{-1,r}$. This proves \eqref{eq:dTrTau} and then the representation formulas can be proved the same way as the corresponding formulas in Prop.\ \ref{prop:repL}.
\end{proof}

\section{A time-varying wave equation}\label{sec:Wave}

This section is concerned with the time-varying wave equation, on a bounded $n$-dimensional Lipschitz domain, whose boundary $\partial\Omega$ has been split into a reflecting part $\Gamma_0$ and a part $\Gamma_1$ used for control and observation. We assume that $\Gamma_0$ and $\Gamma_1$ are relatively open with boundaries of measure zero within $\partial\Omega$. We do not make the restrictive assumption that $\overline\Gamma_0\cap\overline\Gamma_1=\emptyset$ or $\Gamma_0\neq0$ (since we do not need the Poincar\'e inequality). In this section, all vector spaces are real.

Written in so-called ``scattering form'', the boundary-controlled wave equation is (omitting the spatial variable):
\begin{equation}\label{eq:physPDEscatt}
  \left\{
    \begin{aligned}
        \rho(t)\,\ddot z (t) &= 
	  \Div T(t)\,\Grad z(t)-Q(t)\dot z(t)
	  \quad\text{on}~\Omega,\\
    \sqrt 2b\, u(t) &= \nu\cdot T(t)\,\Grad z(t) +b^2\,\dot z(t)
    	\qquad\quad\text{on}~\Gamma_1,\\
    \sqrt 2b\, y(t) &=  \nu\cdot T(t)\,\Grad z(t)-b^2\,\dot z(t)
	\qquad\quad\text{on}~\Gamma_1, \\
    0 &= \dot z(t)\qquad\qquad\qquad\qquad\text{on}~\Gamma_0,~ t\geq\tau,\\
    z(\tau) &= z_0,\quad \dot z(\tau) = z_1\qquad\qquad\qquad~ \text{on}~\Omega;
    \end{aligned}\right.
\end{equation}
here $z(t)=z(t,\xi)$ is the deflection at the point $\xi\in\overline\Omega$ at time $t$. The function $\rho(t,\cdot)\in L^\infty(\Omega)$ is the distributed mass density at time $t\in J$, which satisfies $\rho(t,\cdot)\geq\delta I$ for some $\delta>0$, $T(t,\cdot)\in L^\infty(\Omega)^{n\times n}$ is Young's modulus, with $T(t,\xi)=T(t,\xi)^*\geq \delta I$, and the operator $Q(t)\geq0$ describes viscous damping inside the domain $\Omega$. The scattering parameter $b\in L^\infty(\Gamma_1)$ is time-\emph{independent} with positive values a.e.\ and $\nu\in L^\infty(\partial\Omega;\R^n)$ is the outwards unit normal of $\partial\Omega$.

The time-varying mass density and internal viscous damper $Q$ bring our example beyond the theory of \cite{SchWe10}. Due to the time-varying $\rho$ and $T$, the example is not covered by \cite{ChenWeiss15} either.

In order to formulate the PDE \eqref{eq:physPDEscatt} in operator theory language, so that we can prove its well-posedness, we need to recall the setting of \cite{KuZwWave}. We equip $H^1(\Omega)$ with the graph norm of the gradient, and $H^{\mathrm{div}}(\Omega)$ is the space of all elements of $L^2(\Omega)^n$, whose (distribution) divergence lies in $L^2(\Omega)$, also equipped with the graph norm (see \cite{DaLiBook3}), so that these two spaces are Hilbert. For the definition of the fractional-order Hilbert space $H^{1/2}(\partial\Omega)$ on the boundary of $\Omega$, which is continuously embedded in $L^2(\partial\Omega)$, see \cite[\S 13.5]{TuWeBook}. Then
$$
	\Wscr := \set {h\in H^{1/2}(\partial\Omega)\bigmid h|_{\Gamma_0}=0}
$$ 
is Hilbert with the inherited norm, as the orthogonal projection onto $L^2(\Gamma_0)$ in $L^2(\partial\Omega)$ is bounded, $\Wscr$ is continuously and densely embedded in $L^2(\Gamma_1)$, \cite[Thm 13.6.10, (13.5.3)]{TuWeBook}.  We may then define $\Wscr'$ as the dual of $\Wscr$ with pivot space $L^2(\Gamma_1)$. 

The Dirichlet trace $\gamma_0$ maps $H^1(\Omega)$ continuously onto $H^{1/2}(\partial\Omega)$, and therefore the subspace
$$
	H^1_{\Gamma_0}(\Omega) := \set{g\in H^1(\Omega)\bigmid g\big|_{\Gamma_0}=0}
$$
is Hilbert with the norm inherited from $H^1(\Omega)$. Furthermore, $\gamma_0$ maps $H^1_{\Gamma_0}(\Omega)$ continuously onto $\Wscr$. In \cite[App.\ 1]{KuZwWave}, it was also shown that he restricted normal trace operator 
$$
	u\mapsto (\nu \cdot u)\big|_{\Gamma_1}:C^\infty(\overline\Omega)^n\to L^2(\Gamma_1)
$$ 
has a continuous extension $\gamma_\perp $ that maps $H^{\mathrm{div}}(\Omega)$ \emph{onto} $\Wscr'$. 

In order to write the wave equation as a ``physically motivated'' scattering passive system in the sense of \cite{StWe12b}, we denote 
$$
\begin{aligned}
	H&:=L^2(\Omega)^n,\quad  E:=L^2(\Omega),\quad 
		E_0:=H^1_{\Gamma_0}(\Omega), \\
	U&:=Y:=L^2(\Gamma_1), \qquad \text{and} \\
	L &:= -\Grad \big|_{E_0}\in\Lscr(E_0;H), \\
	K &:= \sqrt2b\,\gamma_0\big|_{E_0}\in\Lscr(E_0; U),\\
	G(t) &:= \bbm{0&0\\0&\dot\rho(t)/\rho(t)-Q(t)/\rho(t)}\in\Lscr(H\times E),\\
	P(t)&:=\bbm{T(t)&0\\0&1/\rho(t)}\in\Lscr(H\times E),
\end{aligned}
$$
where $G(t)$ and $P(t)$ are pointwise multiplication by the given functions of $\xi$, so that, e.g., 
\begin{equation}\label{eq:multipexplain}
	(P(t)x)(\xi)=P(t,\xi)\,x(\xi),\qquad\xi\in\Omega,
\end{equation}
for all $t\in J$ and $x\in H\times E=L^2(\Omega)^{n+1}$. Here $H$, $E$, and $U$ are identified with their duals. The space $E_0$ is densely and continuously contained in $E$. We take $E_0'$ to be the dual of $E_0$ with pivot space $E$ and denote the bounded dual of $L$ by $L'\in\Lscr\big(H;E_0')$. We can of course also consider $L$ as a densely defined unbounded operator on $E$ and in this case we would denote its unbounded adjoint on $E$ by $L^*$.

With the notation introduced above, we can interpret the PDE \eqref{eq:physPDEscatt} as a time-varying \emph{boundary control system} \cite{MaSt06}
\begin{equation}\label{eq:WaveBCS}
	\left\{\begin{aligned} \dot x(t)&=\mathfrak L(t)\,x(t), \\
		u(t)&=\mathfrak G(t)\,x(t),\\
		y(t)&=\mathfrak K(t)\,x(t),
\end{aligned}\right.
\quad t\geq\tau,~ x(\tau)=
	\bbm{\Grad z_0 \\ \rho(\tau)\,z_1},
\end{equation}
by introducing the state $x(t):=\sbm{\Grad z(t) \\ \rho(t)\,\dot z(t) }$ and the operators
$$
\begin{aligned}
	\mathfrak L(t)&:=\bbm{0&\Grad \\\Div&0}P(t)+G(t), \\
	\mathfrak G(t)&:=\frac{1}{\sqrt2 b}
		\bbm{\gamma_\perp&b^2\,\gamma_0}P(t), \\
	\mathfrak K(t)&:=\frac{1}{\sqrt2 b}
		\bbm{\gamma_\perp&-b^2\,\gamma_0}P(t),
\end{aligned}
$$
with the common domain 
\begin{equation}\label{eq:WaveSolSp}
Z(t):=P(t)^{-1}\bbm{H^{\rm div}(\Omega)\\H^1_{\Gamma_0}(\Omega)}.
\end{equation}
By \cite[\S4]{KuZwWave}, the boundary control system \eqref{eq:WaveBCS} is a (scattering) conservative boundary control system in case $P(t)=I$ and $G(t)=0$ for $t\geq\tau$. In this case, we omit $(t)$ from the notation as the system becomes time invariant, and then \cite[Thm 2.3]{MaSt06} gives that the mapping $\sbm{\mathfrak L\\\mathfrak K}\sbm{I\\\mathfrak G}^{-1}$ from $\sbm{x(t)\\u(t)}$ to $\sbm{\dot x(t)\\y(t)}$ in \eqref{eq:WaveBCS}, defined on $\sbm{I\\\mathfrak G}Z$, determines a conservative system,
\begin{equation}\label{eq:WaveIsoMS}
	\bbm{\dot x(t)\\y(t)} =\bbm{\mathfrak L\\\mathfrak K}
		\bbm{I\\\mathfrak G}^{-1}\bbm{x(t)\\u(t)},
	\quad t\geq\tau,~ x(\tau)=x_\tau,
\end{equation}
with the solution space $Z$ in \eqref{eq:WaveSolSp}, with $P(t)=I$.

We next recall the following integration by parts formula from \cite[(A5)]{KuZwWave}: for all $f\in H^{\mathrm{div}}(\Omega)$ and $g\in H^1_{\Gamma_0}(\Omega)$,
\begin{equation}\label{eq:intparts}
	\Ipdp{\Div f}{g}_{L^2(\Omega)}+\Ipdp{f}{\Grad g}_{L^2(\Omega)^n}
	=\Ipdp{\gamma_\perp f}{\gamma_0 g}_{\Wscr',\Wscr}.	
\end{equation}
With this formula, we can obtain the action of the bounded dual $L'$ on the dense subspace $H^{\mathrm{div}}(\Omega)$ of its domain $L^2(\Omega)^n$:

\begin{corollary}\label{cor:Ldual}
With $K_0:=\gamma_0\big|_{E_0}\in\Lscr(E_0;U)$, we have
\begin{equation}\label{eq:Ldualaction}
	L'f=\Div f-K_0'\gamma_\perp f,\qquad f\in H^{\mathrm{div}}(\Omega).
\end{equation}
Moreover, $K_0'$ is injective.
\end{corollary}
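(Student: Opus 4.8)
The plan is to read off \eqref{eq:Ldualaction} directly from the integration-by-parts identity \eqref{eq:intparts}, by testing against an arbitrary $g\in E_0$ and unwinding the definition of the bounded dual $L'$. Fix $f\in H^{\mathrm{div}}(\Omega)$, which in particular lies in $H=L^2(\Omega)^n$, and let $g\in E_0=H^1_{\Gamma_0}(\Omega)$ be arbitrary. Since $L=-\Grad\big|_{E_0}$ and $H$ is identified with its dual, the definition of $L'\in\Lscr(H;E_0')$ gives
$$
	\Ipdp{L'f}{g}_{E_0',E_0}=\Ipdp{f}{Lg}_{H}=-\Ipdp{f}{\Grad g}_{L^2(\Omega)^n}.
$$
By \eqref{eq:intparts}, the right-hand side equals $\Ipdp{\Div f}{g}_{L^2(\Omega)}-\Ipdp{\gamma_\perp f}{\gamma_0 g}_{\Wscr',\Wscr}$, and since $\Div f\in E=L^2(\Omega)$, the pivot identification $E\hookrightarrow E_0'$ turns the first term into $\Ipdp{\Div f}{g}_{E_0',E_0}$.

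It then remains to rewrite the boundary term. Recall that $\gamma_0$ maps $E_0$ continuously \emph{onto} $\Wscr$, so $K_0=\gamma_0\big|_{E_0}$ is a bounded surjection $E_0\to\Wscr$; because $\Wscr\hookrightarrow U=L^2(\Gamma_1)$ densely and $U$ is self-dual, the dual $K_0'\in\Lscr(U;E_0')$ of $K_0\in\Lscr(E_0;U)$ extends uniquely to a bounded operator $\Wscr'\to E_0'$, namely the dual of $K_0\colon E_0\to\Wscr$, and it is this extension that appears in \eqref{eq:Ldualaction}. With $\gamma_0 g=K_0 g\in\Wscr$ we get $\Ipdp{\gamma_\perp f}{\gamma_0 g}_{\Wscr',\Wscr}=\Ipdp{K_0'\gamma_\perp f}{g}_{E_0',E_0}$, so that
$$
	\Ipdp{L'f}{g}_{E_0',E_0}=\Ipdp{\Div f-K_0'\gamma_\perp f}{g}_{E_0',E_0};
$$
as this holds for every $g\in E_0$, the elements $L'f$ and $\Div f-K_0'\gamma_\perp f$ of $E_0'$ coincide, which is \eqref{eq:Ldualaction}. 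For the injectivity claim, observe that the kernel of $K_0'$ is the annihilator (with respect to the $U$-pairing) of $\range{K_0}$; since $\range{K_0}=\Wscr$ is dense in $U$, the only annihilating functional is $0$, so $K_0'$ is injective — equivalently, a bounded surjection between Hilbert spaces has injective adjoint, which covers the extended $K_0'\in\Lscr(\Wscr';E_0')$ as well.

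The computation itself is short; the one point that needs care is the bookkeeping of the several pivot dualities in play — $E$ versus $E_0'$, $U$ versus its own dual, and $\Wscr$ versus $\Wscr'$ — and, in particular, verifying that the symbol $K_0'$ occurring in \eqref{eq:Ldualaction}, which is applied to $\gamma_\perp f\in\Wscr'$ rather than merely to an element of $U$, really is the legitimate extension of the dual of $K_0\in\Lscr(E_0;U)$. That is the step I would spell out most explicitly, as it is the only place where density of $\Wscr$ in $U$ (together with surjectivity of $\gamma_0\colon E_0\to\Wscr$) is actually used.
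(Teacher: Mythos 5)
Your proof is correct and follows essentially the same route as the paper: test against an arbitrary element of $E_0$, apply the integration-by-parts formula \eqref{eq:intparts}, and identify the boundary term via the dual of $K_0$, with injectivity following from the density of $\gamma_0 E_0=\Wscr$ in $U$. Your extra care in justifying that $K_0'$ applied to $\gamma_\perp f\in\Wscr'$ is the dual of the surjection $K_0\colon E_0\to\Wscr$ (rather than merely of $K_0\in\Lscr(E_0;U)$) addresses a point the paper leaves implicit, and is worth spelling out.
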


\begin{proof}
Using \eqref{eq:intparts}, for $w\in E_0=H^1_{\Gamma_0}(\Omega)$ and $f\in H^{\mathrm{div}}(\Omega)$:
$$
\begin{aligned}
	\Ipdp{w}{L'f+K_0'\,\gamma_\perp f}_{E_0,E_0'} &= \\
	\Ipdp{-\Grad w}{f}_{L^(\Omega)^n}+\Ipdp{\gamma_0\,w}{\gamma_\perp f}_{\Wscr',\Wscr} &= \\
	\Ipdp{w}{\Div f}_{L^2(\Omega)}= \Ipdp{w}{\Div f}_{E_0,E_0'},
\end{aligned}
$$
which implies \eqref{eq:Ldualaction}. Finally, if $K_0'g=0$ then
$$
	0=\Ipdp{h}{K_0'g}_{E_0,E_0'}=\Ipdp{\gamma_0 h}{g}_U
$$
for all $h\in E_0$, and consequently $g\in U\ominus \gamma_0E_0=\zero$.
\end{proof}

Cor.\ \ref{cor:Ldual} makes it easy to prove that \eqref{eq:WaveBCS} satisfies
\begin{equation}\label{eq:WaveWP}
\begin{aligned}
	&\bbm{\mathfrak L(t)\\ \mathfrak K(t)}
	\bbm{I\\\mathfrak G(t)}^{-1} = 
	\bbm{\,\overline\Ascr P(t)+G(t) & \Bscr\\\overline\Cscr P(t)&I}\bigg|_{\sbm{I\\\mathfrak G(t)}Z}, \\ 
	& \text{where} \qquad
	\overline\Ascr:=\bbm{0 & -L  \\ L' & -\frac12 K' K}, 
		\quad \Bscr:=\bbm{0\\ K'},\\
	& \overline\Cscr:=\bbm{0 & -K},\qquad
	\dom{\overline \Ascr}=
	\dom{\overline \Cscr}=\bbm{H\\E_0}.
\end{aligned}
\end{equation}
Then $\Ascr\subset \overline\Ascr\subset \Ascr_{-1}$, where (with $X:=H\times E$)
$$
	\Ascr:=\overline\Ascr\big|_{\dom{\Ascr}},\qquad
	\dom{\Ascr}:=\set{x\in X\mid \overline\Ascr x\in X},
$$
and $\Ascr_{-1}$ is the unique extension of $\Ascr$ to an operator in $\Lscr(X;X_{-1})$; see \S\ref{sec:prel} for details. 

The unbounded operator
$$
	\bbm{L\\K}:E\supset\dom{\sbm{L\\K}}=E_0\to\bbm{H\\U}
$$ 
is closed, since the Hilbert space $E_0$ is equipped with the graph norm of $-L$ and $K\in\Lscr(E_0;U)$. In case $T(t)=I$, $\rho=1$ and $Q(t)=Q\geq 0$ for all $t\geq0$, then 
\begin{equation}\label{eq:WaveSysFinal}
	\bbm{\dot x(t)\\y(t)}=
	\bbm{\,\overline\Ascr P(t)+G(t) & \Bscr\\\overline\Cscr P(t)&I}	\bbm{x(t)\\u(t)},\quad t\geq\tau,
\end{equation}
with the operators defined in \eqref{eq:WaveWP} and $G(t)=\sbm{0&0\\0&-Q}$, is indeed in the ``physically motivated'' class, with state space $X=H\times E$, by \cite[Thm 1.1]{StWe12b}. The connection between \eqref{eq:WaveSysFinal} and \eqref{eq:physPDEscatt} is made more precise in the next result, which states that the equations have the same classical solutions. 

\begin{theorem}\label{thm:WaveEq}
Assume that $T$ and $T^{-1}$ are such that the functions $T(\cdot)z=T(\cdot)^*z$ are in $C^1\big(J;L^2(\Omega)^n\big)$ for all $z\in L^2(\Omega)^n$, cf.\ \eqref{eq:multipexplain}. Also, assume that the multiplications by $\rho(\cdot)$ and $1/\rho(\cdot)$ are strongly in $C^1\big(J;L^2(\Omega)\big)$, and that $Q(\cdot)$ is strongly in $C\big(J;L^2(\Omega)\big)$. Then $\Sigma_w$ in \eqref{eq:WaveSysFinal} is a well-posed system with state space $X=L^2(\Omega)^n\times L^2(\Omega)$, input/output space $U=L^2(\Gamma_1)$, and properties 1)\,--\,3) in Theorem \ref{thm:Sigmar}. 

Now additionally assume that $T(\cdot)$ and $\rho(\cdot)$ are strongly in $C^2$ and that $Q(\cdot)$ is strongly in $C^1$. Then $\Sigma_w$ has properties 4)\,--\,5) in Thm \ref{thm:Sigmar}. Moreover, for every $z_0,z_1\in H^1_{\Gamma_0}(\Omega)$, $\tau\in J$ with $\tau<\sup J$, and $u\in H^1_{loc}(J_\tau;U)$, such that
$$
\begin{aligned}
	T(\tau)\Grad z_0 &\in H^{\rm div}(\Omega)
		\qquad\text{and}\\
	\gamma_\perp T(\tau)\Grad z_0 &+ b^2\gamma_0z_1
		=\sqrt 2b\,u(\tau),
\end{aligned}
$$
there is a unique solution of \eqref{eq:physPDEscatt} on $J_\tau$ that satisfies 
\begin{equation}\label{eq:WaveSmooth}
	z\in C^2\big(J_\tau; L^2(\Omega)\big)\cap 
		C^1\big(J_\tau;H^1_{\Gamma_0}(\Omega)\big).
\end{equation}
For this solution, $\left(u,x,y\right)$ is a classical trajectory of \eqref{eq:WaveSysFinal} on $J_\tau$, where $x(t):=\sbm{\Grad z(t) \\ \rho(t)\,\dot z(t) }$, $t\in J_\tau$, the output signal satisfies $y\in H^1_{loc}(J_\tau;Y)$, and the power balance
\begin{equation}\label{eq:WavePow}
\begin{aligned}
	&\ddt  \Ipdp{T(t)\Grad z(t)}{\Grad z(t)}_H
	+ \ddt \Ipdp{\rho(t)\dot z(t)}{\dot z(t)}_E \\
	&\quad + \|y(t)\|^2_U = \|u(t)\|^2_U
	+ \Ipdp{\dot T(t)\Grad z(t)}{\Grad z(t)}_H \\
	&\qquad\qquad +
	\Ipdp{\dot\rho(t)\dot z(t)}{\dot z(t)}_E
	-2\Ipdp{Q(t)\dot z(t)}{\dot z(t)}_E
\end{aligned}
\end{equation}
holds for $t\in J_\tau$, together with the corresponding integrated energy balance. In this case, if $Q(\cdot)$ is a multiplication operator like \eqref{eq:multipexplain}, then the assertions in Prop.\ \ref{prop:repR} hold.
\end{theorem}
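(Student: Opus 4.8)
The plan is to recognise \eqref{eq:WaveSysFinal} as the right-multiplicative perturbation \eqref{eq:ArSys} of the time-invariant conservative wave equation, and then apply Thm~\ref{thm:Sigmar} (and, in the last part, Prop.~\ref{prop:repR}). As base system I take $\Sigma_i:=\sbm{\Ascr_{-1}&\Bscr\\\overline\Cscr&I}$ on $(U,X,U)$ with $X:=H\times E=L^2(\Omega)^n\times L^2(\Omega)$ and $U:=L^2(\Gamma_1)$; this is exactly \eqref{eq:WaveSysFinal} with $T\equiv I$, $\rho\equiv1$, $Q\equiv0$, i.e.\ the boundary control system \eqref{eq:WaveBCS} with $P(t)\equiv I$, $G(t)\equiv0$. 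By \cite[\S4]{KuZwWave} and \cite[Thm 2.3]{MaSt06} the latter is scattering conservative, so via \eqref{eq:WaveIsoMS}--\eqref{eq:WaveWP} (cf.\ also \cite[Thm 1.1]{StWe12b}) $\Sigma_i$ is a scattering-conservative, hence well-posed, linear system, which in particular is passive and energy preserving in the sense of \S\ref{sec:prel}; its main operator $\Ascr$ generates a contraction semigroup on $X$, and its solution space $Z$ in \eqref{eq:solspace} is $\sbm{H^{\rm div}(\Omega)\\H^1_{\Gamma_0}(\Omega)}$, i.e.\ \eqref{eq:WaveSolSp} with $P(t)=I$. Comparing \eqref{eq:WaveSysFinal} with \eqref{eq:ArSys} then identifies $A_{-1}=\Ascr_{-1}$, $B=\Bscr$, $\overline C=\overline\Cscr$, $D=I$, and the perturbation data $P(t)=\sbm{T(t)&0\\0&1/\rho(t)}$, $G(t)=\sbm{0&0\\0&\dot\rho(t)/\rho(t)-Q(t)/\rho(t)}$.

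I would then check the standing hypotheses \eqref{eq:standing}. Self-adjointness, positivity and invertibility in $\Lscr(X)$ of $P(t)$ are immediate from $T(t,\xi)=T(t,\xi)^*\ge\delta I$ and $\delta\le\rho(t,\xi)\le\|\rho(t,\cdot)\|_\infty$, with $P(t)^{-1}=\sbm{T(t)^{-1}&0\\0&\rho(t)}$; the assumed first-order smoothness of multiplication by $T$, $T^{-1}$, $\rho$, $1/\rho$ yields $P(\cdot)z,P(\cdot)^{-1}z\in C^1(J;X)$; and the elementary fact that a strong operator limit of multiplication operators again commutes with all multiplications, hence is again a multiplication operator, shows that $\dot P(t)$ is block multiplication and $\dot\rho(t)/\rho(t)$ a uniformly bounded, strongly continuous multiplication family, so that $G(\cdot)z\in C(J;X)$. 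Under the additional second-order smoothness of $T,\rho$ and first-order smoothness of $Q$, the same reasoning gives $P(\cdot)z\in C^2(J;X)$ and $G(\cdot)z\in C^1(J;X)$. Thm~\ref{thm:Sigmar} now yields a well-posed system $\Sigma_w:=\Sigma_r$ with the properties of Thm~\ref{thm:generationR}, hence items 1)--3); and in the smooth case also items 4)--5) and the properties of Prop.~\ref{prop:GenRsmooth}.

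The substantial step is the dictionary between classical trajectories of \eqref{eq:WaveSysFinal} and classical solutions of \eqref{eq:physPDEscatt}; this is where I expect the real work. If $(u,x,y)$ is a classical trajectory of \eqref{eq:WaveSysFinal}, then Thm~\ref{thm:Sigmar}(2) gives $P(\cdot)x(\cdot)\in C(J_\tau;Z)$, hence $x(t)\in Z(t)=P(t)^{-1}Z$; since $\overline\Ascr$ is a compatible extension of $\Ascr$ and $K_0'$ is injective (Cor.~\ref{cor:Ldual}), the membership $\dot x(t)=\Ascr_{-1}P(t)x(t)+G(t)x(t)+\Bscr u(t)\in X$ forces $u(t)=\mathfrak G(t)x(t)$, and therefore \eqref{eq:WaveBCS} holds with the operators of \eqref{eq:WaveWP}. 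One recovers $z$ from the state: writing $\dot z(t):=\rho(t)^{-1}x_2(t)\in H^1_{\Gamma_0}(\Omega)$ (which lies in $C^1(J_\tau;E)\cap C(J_\tau;H^1_{\Gamma_0})$ since $x_2\in C^1(J_\tau;E)$, $1/\rho$ is strongly $C^1$, and $\rho^{-1}x_2=(Px)_2$), the first line of \eqref{eq:WaveBCS} reads $\dot x_1(t)=-L\dot z(t)=\Grad\dot z(t)$, so $z(t):=z_0+\int_\tau^t\dot z(s)\,ds$ (the integral taken in $H^1_{\Gamma_0}(\Omega)$) satisfies $\Grad z(t)=x_1(t)$, $\rho(t)\dot z(t)=x_2(t)$ and the regularity \eqref{eq:WaveSmooth}; a short computation with Cor.~\ref{cor:Ldual}, in which the distributional boundary contributions cancel, turns the second line of \eqref{eq:WaveBCS} into $\rho(t)\ddot z=\Div T(t)\Grad z-Q(t)\dot z$, while $\mathfrak G(t)x(t)=u(t)$, $\mathfrak K(t)x(t)=y(t)$ and $\dot z\in H^1_{\Gamma_0}$ reproduce the boundary lines of \eqref{eq:physPDEscatt}. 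Conversely, given data $z_0,z_1\in H^1_{\Gamma_0}(\Omega)$ and $u\in H^1_{loc}(J_\tau;U)$ with $T(\tau)\Grad z_0\in H^{\rm div}(\Omega)$ and $\gamma_\perp T(\tau)\Grad z_0+b^2\gamma_0 z_1=\sqrt2b\,u(\tau)$, I set $x_\tau:=\sbm{\Grad z_0\\\rho(\tau)z_1}$; then $P(\tau)x_\tau=\sbm{T(\tau)\Grad z_0\\z_1}\in Z$, the boundary identity is precisely $\mathfrak G(\tau)x_\tau=u(\tau)$, and by compatibility of $\overline\Ascr$ and injectivity of $K_0'$ this is equivalent to $(P(\tau)x_\tau,u)\in V(\tau)$ (after the routine extension/localisation of $u$ on compact subintervals of $J_\tau$). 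Thm~\ref{thm:Sigmar}(5) then gives a unique classical trajectory $(u,x,y)$ of \eqref{eq:WaveSysFinal} with $x(\tau)=x_\tau$ and $y\in H^1_{loc}(J_\tau;Y)$, which is also a trajectory of $\Sigma_w$, and the forward direction above produces the desired solution $z$ of \eqref{eq:physPDEscatt} with \eqref{eq:WaveSmooth}; uniqueness of $z$ follows from uniqueness of classical trajectories (Thm~\ref{thm:Sigmar}(3)) together with $\Grad(z-\tilde z)=0$, $(z-\tilde z)|_{\Gamma_0}=0$.

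Finally, the power balance \eqref{eq:WavePow} is \eqref{eq:SigmalPower} rewritten for this trajectory, using $\Ipdp{P(t)x(t)}{x(t)}=\Ipdp{T(t)\Grad z}{\Grad z}_H+\Ipdp{\rho(t)\dot z}{\dot z}_E$, $\Ipdp{\dot P(t)x(t)}{x(t)}=\Ipdp{\dot T(t)\Grad z}{\Grad z}_H-\Ipdp{\dot\rho(t)\dot z}{\dot z}_E$ (as $\frac{d}{dt}\rho^{-1}=-\dot\rho\,\rho^{-2}$) and $2\,\Ipdp{P(t)x(t)}{G(t)x(t)}=2\Ipdp{\dot\rho(t)\dot z}{\dot z}_E-2\Ipdp{Q(t)\dot z}{\dot z}_E$, which add up to \eqref{eq:WavePow}; since $\Sigma_i$ is energy preserving the inequality is an equality, and integration gives the stated energy balance. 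When $Q(t)$ is the multiplication \eqref{eq:multipexplain} by a real, non-negative function, $G(t)$ is block multiplication by a real function, so $G(t)^*=G(t)$ and hence $G(\cdot)^*z\in C^1(J;X)$; together with $P(\cdot)z\in C^2(J;X)$ this is precisely the hypothesis of Prop.~\ref{prop:repR}, which therefore applies. The main obstacle throughout is the translation of the third paragraph: showing that the abstract state is genuinely of the form $(\Grad z,\rho\dot z)$ with the stated regularity, and that the abstract compatibility condition $(P(\tau)x_\tau,u)\in V(\tau)$ coincides with the PDE boundary-compatibility condition.
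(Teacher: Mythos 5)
Your proposal is correct and follows essentially the same route as the paper's proof: verify the standing assumptions \eqref{eq:standing} for $P(\cdot)$ and $G(\cdot)$, apply Thm \ref{thm:Sigmar} (and Prop.\ \ref{prop:repR} when $Q$ is a multiplication, so that $G(t)^*=G(t)$) to the abstract system \eqref{eq:WaveSysFinal}, and translate between abstract classical trajectories and classical solutions of \eqref{eq:physPDEscatt} via Cor.\ \ref{cor:Ldual}, the injectivity of $K_0'$, the solution-space identification \eqref{eq:WaveSolSp}, and the substitution $x=\sbm{\Grad z\\ \rho\dot z}$ in the power balance. The only cosmetic deviation is your uniqueness step via $\Grad(z-\tilde z)=0$ and the $\Gamma_0$-trace, which is insufficient on its own when $\Gamma_0=\emptyset$ (a case the paper allows); the paper instead recovers $z$ uniquely from $\dot z=x_2/\rho$ together with $z(\tau)=z_0$, which you already have.
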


A multiplication operator $M(\cdot)$ of the type \eqref{eq:multipexplain} satisfies $M(\cdot) f\in C^\ell \big(J;L^2(\Omega)^k\big)$ for all $f\in L^2(\Omega)^k$ if all its component functions $M_{i,j}(\cdot)\in C^\ell\big(J;C(\overline\Omega)\big)$. If additionally $M(t,\xi)\geq \delta I$ for some $\delta>0$ independent of $t\in J$ and $\xi\in \overline\Omega$, then also $M(\cdot)^{-1}f\in C^\ell\big(J;L^2(\Omega)^k\big)$ for all $f\in L^2(\Omega)^k$; see \eqref{eq:PinvDiff}. Thus, even the smoother assumptions on $T(\cdot)$, $\rho(\cdot)$ and $Q(\cdot)$ in Thm \ref{thm:WaveEq} are satisfied in the following interesting particular case, cf.\ \cite[pp.\ 177--181]{ChenWeiss15}: 

A rigid object moves inside the domain $\Omega$, with center point $\eta(t)$ moving according to $\ddot\eta(t)=a\big(t,\eta(t)\big)$, where the acceleration field $a\in C(J\times\overline\Omega;\R^n)$; then $\eta\in C^2(J;\Omega)$. In the moving object, the physical parameters $\rho(t,\xi)$, $T(t,\xi)$ and $Q(t,\xi)$ depend only on the distance to the center point, in a twice continuously differentiable manner; they are of the form
$$
	(M(t)f)(\xi)=m(\|\eta(t)-\xi\|^2)\,f(\xi),\quad t\in J,~\xi\in\Omega,
$$ 
with $m\in C^2(\R)^{k\times k}$ such that $m(\cdot)\geq \delta I$ for some $\delta>0$.

\begin{proof}
For the first assertion, use the first, less smooth part of Thm \ref{thm:Sigmar} on \eqref{eq:WaveSysFinal}, observing that $P(\cdot)$ and $G(\cdot)$ satisfy the standing assumptions \eqref{eq:standing}. For the last statement, note that if $Q(\cdot)$ is a multiplication operator then it is self-adjoint, and hence $G(\cdot)^*$ inherits strong continuity from $G(\cdot)$. It suffices to prove the rest of the statements for compact intervals $[a,b]\subset J$; then, in particular, $H^1([a,b];U)=H^1_{loc}([a,b];U)$. Assume that $T(\cdot),\,\rho(\cdot)\in C^2$ and $Q(\cdot)\in C^1$, strongly.

Let $z_0$, $z_1$ and $u$ be as in the second assertion. Then $x_\tau:=\sbm{\Grad z_0\\\rho(\tau)z_1}$ satisfies $(P(\tau) \, x_\tau,u)\in V(\tau)$, since $\overline\Ascr\subset \Ascr_{-1}$, and
$$
	\Ascr_{-1} P(\tau)x_\tau+\Bscr u(\tau) =
	\bbm{\Grad z_1 \\\Div \big(T(\tau)\Grad z_0\big)}
	\in \bbm{L^2(\Omega)^n\\L^2(\Omega)}=X.
$$
Since multiplication by $1/\rho(\cdot)$, which is the inverse operator of multiplication by $\rho(\cdot)$, is strongly in $C^1\big([a,b];L^2(\Omega)\big)$, together with $Q(\cdot)$ and multiplication by $\dot\rho(\cdot)$, the operator function $G(\cdot)$ is strongly in $C^1([a,b];X)$. Using \eqref{eq:PinvDiff}, one obtains that $1/\rho(\cdot)$ is strongly in $C^2$, so that $P(\cdot)$ is strongly in $C^2([a,b];X)$. By the smoother part of Thm \ref{thm:Sigmar}, there exist unique $x$ and $y$, such that $(u,x,y)$ is a classical trajectory of \eqref{eq:WaveSysFinal} with $x(\tau)=x_\tau$, and this trajectory satisfies \eqref{eq:SigmalPower} with equality, $y\in H^1(J_\tau;Y)$ and $P(\cdot)\,x(\cdot)\in C(J_\tau;Z)$. Writing $x(t)=:\sbm{x_1(t)\\x_2(t)}$ and defining 
\begin{equation}\label{eq:WavezDef}
	z(t):=z_0+\int_\tau^t \frac{x_2(s)}{\rho(s)}\ud s,
\end{equation}
we get $x(t)=\sbm{\Grad z(t)\\\rho(t)\dot z(t)}$, and then \eqref{eq:SigmalPower} specializes to \eqref{eq:WavePow}. The definition of classical trajectory now gives that
$$
	x(t),\,\dot x(t)=\bbm{\Grad \dot z(t)\\\dot \rho(t)\dot z(t)
	+\rho(t)\ddot z(t)}
	\in C\left(J_\tau; \bbm{L^2(\Omega)^n\\ L^2(\Omega)}\right).
$$
Thus \eqref{eq:WaveSmooth} holds with $H^1(\Omega)$ instead of $H^1_{\Gamma_0}(\Omega)$. 

We next prove that $(u,z,y)$ is a solution of \eqref{eq:physPDEscatt} in the $L^2$ sense. Using the formula for $\dot x(t)$, \eqref{eq:WaveWP} and Cor.\ \ref{cor:Ldual}, we get
$$
\begin{aligned}
	\rho(t)\ddot z(t)
	&= L'T(t)\Grad z(t)-\frac12 K'K\dot z(t)+K'u(t) -Q(t)\dot z(t) \\
	&= \Div T(t)\Grad z(t) -Q(t)\dot z(t)
		- K_0'\gamma_\perp T(t)\Grad z(t) \\
	&\qquad -K_0'\,b^2 \gamma_0 \dot z(t)+K_0'\sqrt2b\, u(t) ,
\end{aligned}
$$
as an equality in $E_0'$. Applying this functional to an arbitrary test function, $\varphi\in C^\infty(\Omega)$ with compact support in $\Omega$, we get that $\rho(t)\ddot z(t)+Q(t)\dot z(t)=\Div T(t)\Grad z(t) $ in the sense of distributions. 
From $P(\cdot)\,x(\cdot)\in C(J_\tau;Z)$ and \eqref{eq:WaveSolSp}, we moreover get that $\dot z(t)\in H^1_{\Gamma_0}(\Omega)$ and $T(t)\Grad z(t)\in H^{\rm div}(\Omega)$, so that in fact $\rho(t)\ddot z(t)=\Div T(t)\Grad z(t)-Q(t)\dot z(t)$ in $L^2(\Omega)$, i.e., the first and fourth lines of \eqref{eq:physPDEscatt} are satisfied in $L^2$. Then the injectivity of $K_0'$ and the preceding display gives line 2 of \eqref{eq:physPDEscatt}, as an equality in $L^2(\Gamma_1)$, and line 3 follows from \eqref{eq:WaveWP}:
$$
	\sqrt2b\,y(t)=\gamma_\perp\big(T(t)\,\Grad z(t)\big) 
		+b^2\,\gamma_0\dot z(t)-2b^2\,\gamma_0\dot z(t),
$$
in $L^2(\Gamma_1)$. By \eqref{eq:WavezDef}, we have $z(\tau)=z_0$ and $\dot z(\tau)=x_2(\tau)/\rho(\tau)=z_1$. 

It remains to prove uniqueness. Let $(u,z,y)$ be a classical solution of \eqref{eq:physPDEscatt} with \eqref{eq:WaveSmooth}. Defining $x(t):=\sbm{\Grad z(t)\\\rho(t)\dot z(t)}$, we get from the calculations above that $(u,x,y)$ is a classical trajectory of \eqref{eq:ArSys} with $x(\tau)=\sbm{\Grad z_0\\\rho(\tau)z_1}$ and the given input signal $u$. By Thm \ref{thm:Sigmar}, such a trajectory is unique.
\end{proof}

The detour via \eqref{eq:WaveBCS} was needed only in order to establish that the solution space of \eqref{eq:WaveSysFinal} with $P(t)=I$ and $G(t)=0$ is $Z$ in \eqref{eq:WaveSolSp}, which in turn was needed to prove that $\dot z(t)\big|_{\Gamma_0}=0$ for $t\geq\tau$. We can unfortunately not prove a complete analogue of \cite[Thm 6.3]{ChenWeiss15} for mild trajectories of \eqref{eq:WaveSysFinal}, as we were unable to establish  \eqref{eq:SigmalEnergy} for mild trajectories in Thm \ref{thm:Sigmar}.

\section{Acknowledgment}

The author thanks George Weiss for suggesting this very interesting research project. The research was partially funded by the foundation of Ruth and Nils-Erik Stenb\"ack.


\def\cprime{$'$}

\begin{IEEEbiography}
[{\includegraphics[width=1in,height=1.25in,clip,keepaspectratio]{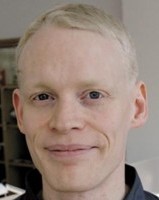}}]
{Mikael Kurula}
obtained his M.Sc. and Ph.D. degrees from \AA bo Akademi University, Finland, in 2004 and 2010, respectively. Since 2014 he is a senior lecturer at the Department of Mathematics and Statistics at the same university. His research focuses on systems and control theory for infinite-dimensional linear systems.
\end{IEEEbiography}

\end{document}